\DeclareMathAlphabet{\mathpzc}{OT1}{pzc}{m}{it}
\newtheorem{thm}{Theorem}[section]
\newtheorem{lem}[thm]{Lemma}
\newtheorem{prop}[thm]{Proposition} 
\newtheorem{defn}[thm]{Definition}
\newtheorem{cor}[thm]{Corollary}
\newtheorem{rem}[thm]{Remark}
\newtheorem{ex}[thm]{Example}
\newtheorem{que}[thm]{Question}
\newcommand{\bQ}{\mathbb Q}
\newcommand{\bN}{\mathbb N}
\newcommand{\bC}{\mathbb C}
\author{Nikhilesh Dasgupta$^*$ and Sergey Gaifullin$^\dagger$ \\
{\small {\it $^*$Department of Mathematics,}}\\
{\small {\it Gandhi Institute of Technology and Management (GITAM) University,}} \\
{\small {\it Bengaluru, Karnataka 561203, India.}}\\
{\small {\it e-mail: its.nikhilesh@gmail.com, ndasgupt@gitam.edu}}\\
{\small {\it $^\dagger$Faculty of Computer Science,}}\\
{\small {\it HSE University,}} \\
{\small {\it Pokrovsky Boulevard 11, Moscow 109028, Russia.}}\\
{\small {\it e-mail: sgayf@yandex.ru}}}
\title{On locally nilpotent derivations of polynomial algebra in three variables}
\begin{document}
\date{}
\maketitle

\begin{abstract}
In this paper we investigate locally nilpotent derivations on the polynomial algebra in three variables over a field of characteristic zero. We introduce an iterating construction giving all locally nilpotent derivations of rank $2$. This construction allows to get examples of non-triangularizable locally nilpotent derivations of rank $2$. We also show that the well-known example of a locally nilpotent derivation of rank $3$, given by Freudenburg, is a member of a large family of new examples of rank $3$ locally nilpotent derivations. Our approach is based on considering all locally nilpotent derivations commuting with a given one. We obtain a characterization of locally nilpotent derivations with a given rank in terms of sets of commuting locally nilpotent derivations.

\smallskip
\noindent
{\small {{\bf Keywords}. Polynomial ring, locally nilpotent derivation, rank, kernel, triangularizable derivation.}

\smallskip
\noindent
{\small {{\bf 2020 MSC}. Primary: 14R10, 14R20; Secondary: 13A50, 32M17.}}
}

\end{abstract}

\section{Introduction}
By a ring, we mean a Noetherian ring with unity. Let $k$ be a field of characteristic zero and $B$ be a $k$-algebra. We use the notation $B=k^{[n]}$ to denote that $B$ is isomorphic to the polynomial algebra in $n$ variables over $k$. A $k$-linear map $\delta : B \rightarrow B$ is said to be a $k$-{\it derivation} if it satisfies the Leibnitz rule: $\delta(ab)=a\delta(b)+b\delta(a)$ for all $a,b \in B$. A $k$-derivation $\delta$ is a said to be a {\it locally nilpotent derivation} (abbrev. LND) if, for each $b \in B$, there exits $n(b) \in \bN$ such that $\delta^{n(b)}(b)=0$. The $B$-subalgebra $A:=\{ b \in B~\vert ~ \delta(b)=0\}$ is called the {\it kernel} of $\delta$ and denoted by ${\rm Ker}~\delta$. The set of all LNDs of $B$ is denoted by ${\rm LND}(B)$.

When $k$ is algebraically closed, the group of regular automorphisms $\mathrm{Aut}(k^{[n]})$ is a classical object of investigation in affine algebraic geometry.  There are a lot of famous long standing problems about $\mathrm{Aut}(k^{[n]})$ such as Jacobian conjecture, Linearization problem, Tameness problem and so on. One of the approaches to study $\mathrm{Aut}(k^{[n]})$  is to study all algebraic subgroups in it. Each connected linear algebraic group is generated by one-dimensional subgroups isomorphic to the additive or the multiplicative group of $k$. We call such subgroups $\mathbb{G}_a$ and $\mathbb{G}_m$-subgroups, respectively. The $\mathbb{G}_a$-subgroups correspond to locally nilpotent derivations  of $k^{[n]}$.

 LNDs  became a powerful tool for investigating $k$-algebras.  In 1996, Makar-Limanov~\cite{ML} introduced a new invariant of a $k$-algebra $B$, which was later named the {\it Makar-Limanov invariant}. It is the intersection of kernels of all LNDs on $B$ and is denoted by ${\rm ML}(B)$. This invariant allows one to prove that the  Koras-Russell cubic $\{x+x^2y+z^2+t^3=0\}$ is not isomorphic to the three dimensional affine space. Generalization of this result given by Kaliman and Makar-Limanov  in 1997 was the final step in the proof of the Linearization problem for $\mathbb{C}^*$-action on $\mathbb{C}^3$ \cite{KML}. One can also use ${\rm ML}(B)$ for the purpose of describing all automorphisms of a $k$-algebra $B$ \cite{ML2, MJ,P,Gai}.  In~\cite{DaGu},  characterizations of $k^{[2]}$ and $k^{[3]}$ in terms of the Makar-Limanov invariant are obtained.

Another popular concept based on the concept of LNDs is that of flexibility of an affine algebraic variety. Let $\mathrm{SAut}(X)$ denote the group generated by all $\mathbb{G}_a$-subgroups in $\mathrm{Aut}(k[X])$, where $k[X]$ is the algebra of regular functions on an affine algebraic variety $X$. In \cite{AFKKZ} it is proved that if $\mathrm{SAut}(X)$ acts on the regular locus $X^{reg}$ transitively, then it acts on  $X^{reg}$ infinitely transitively, i.e. every  $m$-tuple $\{a_1,\ldots, a_m\}$ of distinct points in $X^{reg}$ can be moved to any $m$-tuple $\{b_1,\ldots, b_m\}$ of distinct points in $X^{reg}$ by an element of $\mathrm{SAut}(X)$. Such varieties are called {\it flexible}. It turns out that many natural classes of varieties including affine spaces are flexible \cite{AKZ, AFKKZ, GSh, Pe}.

Although LNDs are popular objects of investigation, even for many well-known algebras it is a complicated problem to describe all LNDs on these algebras. For example, in case of the polynomial algebra $k^{[n]}$, there is no complete description starting from $n=3$. For $n=1$, each LND of $k[x]$ has the form $c\frac{\partial}{\partial x}$, where $c\in k$.  For $n=2$, each LND of $k[x,y]$ has the form $f(u)\frac{\partial}{\partial v}$, where $\{u,v\}$ is a coordinate system of $k^{[2]}$, i.e. $k[x,y]=k[u,v]$ (Theorem \ref{R}). When $n\geqslant 3$, there is no such explicit description of an arbitrary LND, but there are many known examples. Indeed, each {\it triangularizable} derivation is an LND. By a triangularizable derivation, we mean a derivation $\delta$ on $k^{[n]}$ such that in some coordinate system $\{x_1,\ldots, x_n\}$ we have $\delta (x_i)\in k[x_1,\ldots, x_{i-1}]$, in particular, $\delta(x_1)\in k$.  For $n=3$, there exists a complete description of LNDs which contain at least one variable in its kernel, given by Daigle and Freudenburg (Theorem \ref{df}).  It seems that starting from $n=4$ the situation becomes very difficult. An important technique to study an LND is by investigating its kernel. Indeed, it is easy to see  that for $n\leqslant 2$ the kernel of any non-zero LND of $k^{[n]}$ is a polynomial ring. For $n=3$, this fact is the statement of Miyanishi's theorem \cite[Theorem~5.1]{F}. But for $n\geqslant 5$ the kernel of an LND can be even non finitely generated~\cite{DF}. For $n=4$, though it is known that the kernel of an LND is finitely generated if it annihilates a variable \cite[Theorem 1]{BhD} and is also a polynomial ring if in addition, the kernel is regular \cite{BGL}, the question of finite generation of the kernel remains open in general.  That is why our paper is devoted to the case $n=3$. This paper is an attempt to understand how do LNDs on $k[x,y,z]$ look like.

If $\delta$ is an LND and $h\in \mathrm{Ker}\,\delta$, then $h\delta$ is also an LND, which is called a {\it replica} of $\delta$. So, to investigate all LNDs in some sense it is sufficient to investigate irreducible ones, i.e. LNDs whose image is not contained in any non-trivial principal ideal. Indeed, all LNDs are replicas of irreducible ones. Each derivation of $k[x_1,\ldots, x_n]$ has the form 
$$
\delta=f_1\frac{\partial}{\partial x_1}+\ldots+f_n\frac{\partial}{\partial x_n}, \qquad \text{where} \quad f_i\in k[x_1,\ldots, x_n].
$$
If we change the system of coordinates, i.e. consider such $y_1,\ldots, y_n$ that $k[y_1,\ldots, y_n]=k[x_1,\ldots, x_n]$, then $\delta$ has the form 
$$\widetilde{f}_1\frac{\partial}{\partial y_1}+\ldots+\widetilde{f}_n\frac{\partial}{\partial y_n}, \qquad \widetilde{f}_i\in k[y_1,\ldots, y_n].$$
The minimal number of non-zero $\widetilde{f}_1,\ldots, \widetilde{f}_n$ for various systems of coordinates is called {\it rank} of the derivation $\delta$, denoted by $\mathrm{rk}~\delta$.  If $\mathrm{rk}\,\delta =0$, then $\delta=0$. Every LND of rank $1$  is a replica of the derivation $\frac{\partial}{\partial x_1}$ in some coordinate system $\{x_1, \dots ,x_n\}$ of $k^{[n]}$ (Proposition \ref{fr95c}). It is easy to prove that a replica of a triangularizable derivation has rank at most $n-1$. But the converse is not true even for $n=3$. On $k^{[3]}$, there are examples of non-triangularizable irreducible LNDs of rank 2. The first example of a non-triangularizable LND of $k^{[3]}$ was given by Bass \cite{B}, which was generalized to $k^{[n]}$ for $n \geqslant 4$ by Popov \cite{Po}. More examples were given by Freudenburg \cite[Example 2]{F95} and Daigle \cite[Example 3.5]{D96}. Later,  Freudenburg~\cite{F98} proved that there exist LNDs on $k^{[3]}$ of rank $3$. This example of Freudenburg (see Section \ref{rk3der}) is given by some rather complicated polynomials. The proof that this LND is of rank $3$, is based on the fact that it is homogeneous with respect to a $\mathbb{Z}$-grading on $B$. Later, he introduced the technique of local slice constructions \cite{Fr97} to generalize this example and obtained new examples of rank~$3$ LNDs. Similar examples were also obtained by Daigle, who used a geometric approach. The connection between his geometric approach and local slice constructions is described in \cite{Da07}. Although the local slice construction itself does not require any kind of
 homogeneity, the earlier examples of rank $3$ LNDs obtained by these methods were all homogeneous with respect to some positive $\mathbb{Z}$ gradings.

Our approach to studying LNDs of $k^{[3]}$ is based on investigating all LNDs that commute with a fixed one. It is easy to see that, given any LND $\delta$,  all LNDs equivalent to $\delta$ commute with $\delta$. Recall that, two LNDs are called {\it equivalent} if their kernels coincide. We prove the following characterization of irreducible LNDs of rank $3$: {\it an irreducible LND $\delta$ of $k^{[3]}$ has rank 3 if and only if every LND commuting with it is equivalent to $\delta$} (see Theorem~\ref{rk2thm1}). Note that, for a reducible LND, it is not true. We give an explicit criterion for an LND to have a non-equivalent commuting one (see Proposition \ref{rk2com1}). It allows us to construct LNDs even of rank $1$ without non-equivalent commuting ones. For example, one can consider the derivation $xy\dfrac{\partial}{\partial z}$ on $k[x,y,z]$. We also prove that each LND of $k^{[3]}$ can be restricted to a canonical (possibly, proper) subalgebra of $k^{[3]}$ isomorphic to $k^{[3]}$, such that the restriction is an LND of rank one. We prove that if a restriction of an irreducible LND has no non-equivalent commuting LNDs then the initial LND $\delta$ also has no non-equivalent commuting LNDs and hence has rank $3$ (Remark~\ref{nepsam} and Proposition~\ref{rrr}). This allows us to prove a sufficient condition when a local slice construction gives an LND of rank~3 (Corollary~\ref{lscor}). So, we obtain a new proof that Freudenburg's example is of rank $3$ and construct new non-homogeneous examples of rank $3$ LNDs (Theorem \ref{rk3final}). 

In this paper, we develop a new technique to obtain new LNDs via chains of commuting LNDs starting form basic partial derivations. We prove that every irreducible LND on $k^{[n]}$ of rank at most $2$ can be obtained by an iterative construction from partial derivations in some coordinate system (Corollary \ref{CLNDC3var}). We call this construction as the {\it Modified Commuting LND Construction} (abbrev. MC-construction). It has a close connection with amalgamated product structure of $\mathrm{Aut}(k^{[2]})$.  The minimal number of steps of this construction is defined to be the {\it level} of the LND. This gives a new numerical characteristic, which shows how complicated the LND is. The simplest LNDs of rank at most $2$ are LNDs of rank $1$, then come the triangularizable LNDs. As the level of an LND depends on the choice of coordinate system, one can take the minimum over a suitable collection of coordinate systems. That minimum number is called the {\it universal level} of $D$. We show that an LND has rank $1$ if and only if it has universal level $1$ and it is triangularizable if and only if it has universal level $3$ (Theorem \ref{crtr}). In our definition, no LND has universal level $2$. Using the MC-construction technique, we construct a new family of non-triangularizable LNDs of rank 2 (Example~\ref{ex1}). These examples are different from the earlier examples of Freudenburg \cite{F95} and Daigle \cite{D96}. In a forthcoming paper, we will give a description of the isotropy subgroup of an LND of $k^{[3]}$, i.e. the subgroup of ${\rm Aut}(k^{[3]})$ consisting of all automorphisms of $k^{[3]}$ commuting with this LND.

\section{Preliminaries}

For a ring $R$, we denote the multiplicative group of units by $R^*$. Let $k^{[n]}$ be the polynomial ring in $n$ variables over $k$. We say that $\mathbf{x}=\{x_1, \dots, x_n\}$, $x_i\in k^{[n]}$ is a {\it coordinate system} or {\it system of variables} if $k^{[n]}=k[x_1, \dots, x_n]$. By a {\it coordinate} or a {\it variable} we mean such an element that it can be included in a coordinate system. If $\alpha$ is an automorphism of $k^{[n]}$ such that $\alpha (x_i)=f_i$, then we use the notation $\alpha=(f_1,\ldots,f_n)$.

\begin{defn}
{\em For a coordinate system $\mathbf{x}=\{x_1, \dots, x_n\}$ of $k^{[n]}$, an automorphsim $\alpha=(f_1, \dots ,f_n)$ is said to be {\it triangular} if $f_i \in k[x_1, \dots ,x_i]$ for each $i$, $1 \leqslant i\leqslant  n$. We denote the subgroup of triangular automorphisms by ${\rm T}_n(k; \mathbf{x})$ or more simply by ${\rm T}_n(k)$ when the coordinate system $\mathbf{x}$ is understood from the context. The subgroup of ${\rm Aut}(k^{[n]})$ generated by ${\rm GL}_n(k)$ and the triangular automorphisms is called the {\it tame subgroup} relative to the coordinate system $\mathbf{x}$, and is denoted by ${\rm TA}_n(k;\mathbf{x})$.}
\end{defn}

It is well-known that every automorphism of $k^{[2]}$ is tame. This result was first proved by Jung \cite{Ju} for characteristic zero and generalized to arbitrary characteristic by Van der Kulk~\cite{VDK}. The next theorem gives the explicit structure of ${\rm Aut}(k^{[2]})$ \cite[Theorem~4.2]{F}. Though this result was initially contained in the works of Van der Kulk, it was Nagata who stated and proved it in this exact form \cite{Na}. His proof assumed $k$ to be algebraically closed. Later, in his Ph.D. thesis, Wright proved this theorem for the general case \cite{W1}.
\begin{thm}\label{jvk}
Let $k$ be any field. Then ${\rm Aut}(k^{[2]})$ has the structure of an amalgamated free product. More precisely,
$${\rm Aut}(k^{[2]})={\rm Af}_2(k) \ast_C {\rm T}_2(k),$$
where ${\rm Af}_2(k)$ is the subgroup of affine automorphisms
$$\rm{A f}_2(k)=\left\{
\begin{pmatrix}
x\\
y
\end{pmatrix}\mapsto
\begin{pmatrix}
a_{11}x+a_{12}y+b_1\\
a_{21}x+a_{22}y+b_2
\end{pmatrix} \Big \vert  \begin{pmatrix}
a_{11}&a_{12}\\
a_{21}&a_{22}
\end{pmatrix} \in {\rm GL}_2(k)~\text{and}~b_1,b_2 \in k\ 
 \right\}.$$ and $C := {\rm Af}_2(k) \cap {\rm T}_2(k)$.
\end{thm}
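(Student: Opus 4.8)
The plan is to separate the statement into its two logically independent parts: first, the \emph{tameness} assertion that $\mathrm{Aut}(k^{[2]})$ is generated by $\mathrm{Af}_2(k)$ together with $\mathrm{T}_2(k)$ (the Jung--van der Kulk content), and second, the assertion that there are no relations among these generators beyond those forced by the overlap $C = \mathrm{Af}_2(k) \cap \mathrm{T}_2(k)$. The latter is exactly the statement that the canonical surjection $\mathrm{Af}_2(k) \ast_C \mathrm{T}_2(k) \to \mathrm{Aut}(k^{[2]})$, which exists by the first part, is in fact injective.

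For tameness I would argue by descent on the total degree $\deg f + \deg g$ of an automorphism $\varphi = (f,g)$. The engine is the behaviour of the leading (top-degree homogeneous) forms $f^{+}, g^{+}$. Since the Jacobian $f_x g_y - f_y g_x$ is a nonzero constant, comparing top-degree parts forces the Jacobian of the leading forms to vanish whenever $\deg f + \deg g > 2$; and for two homogeneous polynomials in two variables, vanishing Jacobian means that $f^{+}$ and $g^{+}$ are, up to scalars, powers of a single homogeneous form. A short argument then shows that one of $\deg f$, $\deg g$ divides the other, say $\deg g = q \deg f$, and that for a suitable $c \in k$ the element $g - c f^{q} = \varphi(y - c x^{q})$ has strictly smaller degree than $g$. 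Composing $\varphi$ with the triangular automorphism $(x,\, y - c x^{q})$ thus lowers $\deg f + \deg g$, and iterating reduces $\varphi$ to an affine map. This expresses every automorphism as a product of affine and triangular ones.

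For the amalgam structure I would use the degree function as an invariant propagating along reduced words. The two basic facts are that every element of $\mathrm{Af}_2(k)$ preserves the degree of a polynomial (its inverse is again affine of degree one), whereas every element of $\mathrm{T}_2(k) \setminus C$ has a genuinely nonlinear second component and therefore strictly raises degree, with a controlled leading form. Given a reduced word $w = c_1 c_2 \cdots c_r$ alternating between $\mathrm{Af}_2(k) \setminus C$ and $\mathrm{T}_2(k) \setminus C$, I would track $\deg$ of $w$ evaluated on the coordinates and show by induction on $r$ that it grows essentially as the product of the degrees of the triangular factors, hence exceeds $1$. Consequently $w \neq \mathrm{id}$, and by the normal-form criterion for amalgamated free products this is precisely what forces the surjection above to be injective.

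I expect the second part to be the main obstacle: the first part yields only generation, and the delicate point is to control degree growth through the alternation. The genuine danger is that, after a nonlinear triangular factor raises the degree, the ensuing affine factor --- which can mix the two coordinates --- might cancel the leading form and collapse the degree. Ruling this out requires careful bookkeeping with the leading forms produced at each triangular step, together with the fact that $C$ is exactly the affine--triangular overlap, so that a reduced word never contains an affine factor capable of absorbing the preceding triangular one. The tameness step, by contrast, is a clean induction once the leading-form lemma is in hand.
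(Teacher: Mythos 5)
You should first note that the paper does not actually prove Theorem \ref{jvk}: it is quoted as a classical result with attributions to Jung, van der Kulk, Nagata and Wright, so there is no in-paper argument to compare yours against. Your sketch follows the classical two-stage route (tameness by degree descent on leading forms, then faithfulness of the amalgam by degree growth along reduced words), which is how the theorem is proved in the literature, and the second stage as you describe it is sound: in a reduced word the affine letters lie outside $C=\mathrm{Af}_2(k)\cap\mathrm{T}_2(k)$, hence genuinely mix the coordinates and cannot absorb the leading form created by the preceding nonlinear triangular letter, so the degree of the word is the product of the degrees of its triangular letters and a nontrivial reduced word is not the identity.

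The genuine gap is in the first stage, and it matters because the theorem is stated for an arbitrary field. Your leading-form lemma rests on the implication that $\mathrm{Jac}(f^{+},g^{+})=0$ forces $f^{+}$ and $g^{+}$ to be powers of a common homogeneous form; this is deduced from algebraic dependence and fails in characteristic $p$ (already $\mathrm{Jac}(x^{p},y)=0$ while $x^{p}$ and $y$ are algebraically independent). Jung's Jacobian argument is intrinsically a characteristic-zero proof; the arbitrary-characteristic case requires a different mechanism (van der Kulk's argument, or a valuation/Newton-polygon approach), which is precisely why the paper cites van der Kulk and Wright separately from Jung. Moreover, even in characteristic zero, the step you dismiss as ``a short argument then shows that one of $\deg f$, $\deg g$ divides the other'' is the real crux: from $f^{+}=cw^{a}$ and $g^{+}=c'w^{b}$ you only get $\deg f : \deg g = a : b$, and when neither of $a,b$ divides the other your descent step has no move to make (e.g.\ $a=2$, $b=3$); excluding this case needs a further degree estimate for elements of $k[f,g]$, using $x,y\in k[f,g]$ and control of cancellation among powers of $w$. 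So the outline is the standard and correct one for characteristic zero, which is all this paper ever uses, but as a proof of the statement as printed it is incomplete on both counts.
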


\smallskip
\noindent

Let us recall some basic facts about locally nilpotent derivations. A detailed treatise in this field can be found  in the book by Freudenburg~\cite{F}. Let $B$ be an affine $k$-domain. The set of all LNDs of $B$ we denote by ${\rm LND}(B)$. Each LND $D$ induces a degree function $\deg_{D}\colon B\setminus\{0\}\rightarrow\mathbb{Z}_{\geqslant 0}$, given by $$\deg_D\,f:=m,~\text{where}~D^m(f)\neq 0~\text{and}~D^{m+1}(f)=0.$$ This degree function satisfies the usual properties of a degree function: 
\begin{enumerate}
\item [\rm (i)] $\deg_D(f+g)\leqslant \mathrm{max}\{\deg_D\,f,\deg_D\,g\}$;
\item [\rm (ii)] $\deg_D(fg)=\deg_D\,f+\deg_D\,g$.
\end{enumerate}
For each non-zero LND $D$ its kernel $A=\mathrm{Ker}\,D$ is a factorially closed subalgebra such that the transcendence degree of $B$ over $A$ equals 1, see \cite[Principle 1]{F}. Recall that, a subalgebra $A$ of an algebra $B$ is said to be factorially closed in $B$, if for any $a,b \in B$, the condition $ab \in A$ implies that both $a$ and $b$ are in $A$. If $S \subset A \setminus \{0\}$ is multiplicatively closed, then $D$ induces an LND on $S^{-1}B$ with kernel $S^{-1}A$, see \cite[Principle~9]{F}.

\begin{defn}
{\em Let $B$ be a $k$-algebra. We say two locally nilpotent derivations $D_1$ and $D_2$ of $B$ to be {\it equivalent} if ${\rm Ker}~D_1={\rm Ker}~D_2$. 
}
\end{defn}

It is well-known that two non-zero LNDs $D_1$ and $D_2$ are equivalent if and only if there exist non-zero $a,b\in B$ such that $aD_1=bD_2$.
It is easy to see that both $a$ and $b$ can be chosen in ${\rm Ker}~D_1={\rm Ker}~D_2$, see \cite[Principle 12]{F}.

\begin{defn}
{\em
Let $D\in {\rm LND}(B)$. An element $s \in B$ is called a {\it slice} if $Ds=1$, and a 
{\it local slice} if $Ds \in {\rm Ker} D$ and $Ds \neq 0$, i.e., ${\rm deg}_D(s)=1$.
}
\end{defn}
A local slice exists for every non-zero LND. But the existence of a slice is a rather strict condition for an LND. The following important result is known as the Slice Theorem,  see \cite[Corollary 1.26]{F}.
\begin{thm}\label{st}
 Let $B$ a $k$-domain. Suppose $D \in {\rm LND}(B)$ admits a slice $s\in B$, and let $A={\rm Ker}~D$. Then 
$B=A[s]$ and $D=\frac{\partial}{\partial s}$.
\end{thm}

\smallskip
\noindent
The following assertion, see~\cite[Principle~11(d)]{F}, is a generalization of the above theorem. 
\begin{prop}\label{proplocsl}
Let $r$ be a local slice of $D$. Then the following holds $B\left[\frac{1}{D(r)}\right]=A\left[\frac{1}{D(r)}\right][r]$.
\end{prop}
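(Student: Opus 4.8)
The plan is to reduce the statement to the Slice Theorem (Theorem \ref{st}) by passing to an appropriate localization in which the local slice becomes an honest slice. First I would set $a := D(r)$. Since $r$ is a local slice, we have $a \in A \setminus \{0\}$ by definition. Consider the multiplicatively closed set $S = \{1, a, a^2, \dots\} \subset A \setminus \{0\}$. By the localization principle recalled in the preliminaries (\cite[Principle~9]{F}), $D$ induces an LND on $S^{-1}B = B\left[\frac{1}{a}\right]$ whose kernel is $S^{-1}A = A\left[\frac{1}{a}\right]$. Thus the whole problem is transported into the localized domain $B\left[\frac{1}{a}\right]$, where $a$ is now a unit.

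The key observation is that $\frac{1}{a} \in A\left[\frac{1}{a}\right] = {\rm Ker}\,D$ (for the induced derivation), so $D$ treats $\frac{1}{a}$ as a constant. Hence, setting $s := \frac{r}{a} = \frac{1}{a}\cdot r \in B\left[\frac{1}{a}\right]$, the Leibniz rule gives
$$
D(s) = D\!\left(\tfrac{1}{a}\, r\right) = \tfrac{1}{a}\, D(r) = \tfrac{1}{a}\cdot a = 1,
$$
so $s$ is a genuine slice of the induced LND on $B\left[\frac{1}{a}\right]$. Applying the Slice Theorem to $B\left[\frac{1}{a}\right]$ with slice $s$ and kernel $A\left[\frac{1}{a}\right]$ yields
$$
B\left[\tfrac{1}{a}\right] = A\left[\tfrac{1}{a}\right][s] = A\left[\tfrac{1}{a}\right]\!\left[\tfrac{r}{a}\right].
$$

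To finish, I would note that, because $a$ is a unit of $A\left[\frac{1}{a}\right]$, the subalgebras $A\left[\frac{1}{a}\right]\!\left[\frac{r}{a}\right]$ and $A\left[\frac{1}{a}\right][r]$ coincide: each of $\frac{r}{a}$ and $r$ is obtained from the other by multiplying by the unit $a^{\pm 1}$, so they generate the same ring over $A\left[\frac{1}{a}\right]$. Substituting this identification gives exactly $B\left[\frac{1}{D(r)}\right] = A\left[\frac{1}{D(r)}\right][r]$, as claimed. The argument is essentially routine once the reduction is set up, and I do not anticipate a genuine obstacle; the only point demanding care is verifying that the kernel of the induced LND on the localization is precisely $A\left[\frac{1}{a}\right]$ (rather than something larger), which is exactly what \cite[Principle~9]{F} guarantees, and checking that $s = \frac{r}{a}$ lies in $B\left[\frac{1}{a}\right]$ and is annihilated to the correct order so that the Slice Theorem applies verbatim.
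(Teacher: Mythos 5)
Your proof is correct and complete; the paper itself offers no proof of this proposition, simply citing \cite[Principle~11(d)]{F}, and your argument (localize at $D(r)$ so that $r/D(r)$ becomes a genuine slice, apply the Slice Theorem, and note that $A\left[\frac{1}{D(r)}\right]\left[\frac{r}{D(r)}\right]=A\left[\frac{1}{D(r)}\right][r]$ because $D(r)$ is a unit in the localization) is precisely the standard argument behind that cited principle. Every step is justified by results already recalled in the paper's preliminaries, so there is nothing to add.
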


Now we assume $B=k^{[n]}$. We start with the result, due to Rentschler \cite{Re}, which characterizes all locally nilpotent derivations on~$k^{[2]}$.
\begin{thm}\label{R}
Let $D$  be a non-zero LND of $k[x,y]$. There there exists a tame automorphism $\alpha \in TA_2(k; \{x,y\})$ and $p(x) \in k[x]$ such that $\alpha D  {\alpha}^{-1}=p(x)\dfrac{\partial }{\partial y}.$
\end{thm}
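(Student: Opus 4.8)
The plan is to translate the statement into a question about one-parameter subgroups of $\mathrm{Aut}(k^{[2]})$ and then exploit the amalgamated free product structure of Theorem~\ref{jvk}. Since $D$ is locally nilpotent, the exponential $\exp(tD)=\sum_{m\geqslant 0}\frac{t^m}{m!}D^m$ is a well-defined automorphism of $k^{[2]}$ for every $t\in k$, and $t\mapsto\exp(tD)$ is an injective homomorphism $\mathbb{G}_a\to\mathrm{Aut}(k^{[2]})$ whose image $U$ is a $\mathbb{G}_a$-subgroup; here $D$ is recovered as $\frac{d}{dt}\big|_{t=0}\exp(tD)$. Conjugating $D$ by an automorphism $\alpha$ conjugates $U$ by $\alpha$, so it suffices to find $\alpha$ with $\alpha U\alpha^{-1}\subseteq\mathrm{T}_2(k)$: then $\alpha D\alpha^{-1}$ is a triangular derivation, which a short computation brings to the required form. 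Note that the tameness demanded in the statement is automatic, since by Jung--van der Kulk every automorphism of $k^{[2]}$ is tame.

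The key step is to conjugate $U$ into a factor of the amalgam. Consider the action of $\mathrm{Aut}(k^{[2]})=\mathrm{Af}_2(k)\ast_C\mathrm{T}_2(k)$ on its Bass--Serre tree $T$, whose vertex stabilizers are the conjugates of the two factors. First I would observe that the elements of $U$ have uniformly bounded degree: because $\exp(tD)(x)$ and $\exp(tD)(y)$ are the finite sums $\sum\frac{t^m}{m!}D^m(x)$ and $\sum\frac{t^m}{m!}D^m(y)$, their degrees in $x,y$ are bounded independently of $t$. Via the degree estimates underlying Theorem~\ref{jvk} (the degree of a reduced word grows with its syllable length, each nontrivial triangular syllable contributing a factor $\geqslant 2$), bounded degree forces bounded reduced length, so the $U$-orbit of a vertex of $T$ is bounded. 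A group acting on a tree with a bounded orbit fixes a vertex (the circumcenter of the orbit), hence $U$ is conjugate into $\mathrm{Af}_2(k)$ or into $\mathrm{T}_2(k)$. In the first case $U$ is a unipotent one-parameter subgroup of affine maps, which a further linear (hence tame) conjugation puts into strictly triangular form, i.e.\ into $\mathrm{T}_2(k)$. Thus after a tame conjugation we may assume $U\subseteq\mathrm{T}_2(k)$.

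With $U\subseteq\mathrm{T}_2(k)$ the derivation is triangular: differentiating the flow $\exp(t\widetilde D)=(x+a(t),\,y+q_t(x))$ (the diagonal entries being $1$ by unipotence) gives $\widetilde D=\alpha D\alpha^{-1}=c\,\frac{\partial}{\partial x}+r(x)\,\frac{\partial}{\partial y}$ with $c\in k$ and $r(x)\in k[x]$. If $c=0$ we are done with $p=r$. If $c\neq 0$, then $\widetilde D(x)=c$ shows that $x/c$ is a slice, and by the Slice Theorem~\ref{st} we may eliminate the second component: the triangular change $y\mapsto y-\tfrac{1}{c}\int r\,dx$ (a polynomial, as $\mathrm{char}\,k=0$) yields $\widetilde D=c\,\frac{\partial}{\partial x}$, after which interchanging the two variables (an affine, hence tame, map) gives the normal form $p(x)\frac{\partial}{\partial y}$ with constant $p=c$. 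Composing all the tame automorphisms used produces the desired $\alpha$.

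I expect the main obstacle to be the boundedness step: rigorously relating the polynomial degree of an automorphism to its reduced length in the amalgam, and deducing that a bounded-degree subgroup fixes a vertex of $T$. This is precisely where the structure of $\mathrm{Aut}(k^{[2]})$ enters in an essential way. The purely algebraic reformulation --- that $\mathrm{Ker}\,D=k[f]$ for some coordinate $f$, after which the Slice Theorem~\ref{st} finishes the argument --- runs into the same difficulty at the point of proving that the kernel generator $f$ is a variable.
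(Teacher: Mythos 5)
The paper does not actually prove Theorem~\ref{R}: it is quoted from Rentschler's 1968 paper, whose argument is purely algebraic (one shows directly that $\mathrm{Ker}\,D=k[f]$ for a variable $f$ via a Newton-polygon/degree analysis and then applies the Slice Theorem; Rentschler's proof in fact yields Jung's theorem as a corollary). Your proposal runs the logic in the opposite direction: it takes Theorem~\ref{jvk} as given and derives Theorem~\ref{R} from it via the Bass--Serre tree of the amalgam. This is a legitimate and known alternative route, with no circularity since Jung--van der Kulk is established independently of Rentschler, and your outline is essentially complete and correct: bounded degree of the flow $\exp(tD)$ forces bounded syllable length, bounded orbits on a tree give a fixed vertex, a fixed vertex conjugates $U$ into a factor, and the triangular case reduces to the stated normal form. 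What your approach buys is a conceptual explanation (ellipticity of bounded subgroups) at the price of importing one quantitative fact that is \emph{not} contained in the statement of Theorem~\ref{jvk} as given here, namely the degree multiplicativity $\deg\alpha=\prod_i\deg\tau_i$ for a reduced word, which is the van der Kulk refinement of the amalgam structure; you correctly identify this as the load-bearing step, and it must be cited or proved separately. Two small repairs: the assertion that the diagonal entries of the triangular flow are $1$ should be justified (a polynomial homomorphism $\mathbb{G}_a\to\mathbb{G}_m$ over a field of characteristic zero is trivial, or equivalently local nilpotence of $\widetilde D$ kills the linear term of $\widetilde D(x)$ in $x$), and the shear eliminating the $y$-component should be $y\mapsto y+\tfrac{1}{c}\int r\,dx$ rather than $y\mapsto y-\tfrac{1}{c}\int r\,dx$, since one needs $\alpha^{-1}(y)\in\mathrm{Ker}\,\widetilde D$; neither affects the validity of the argument.
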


In particular, it follows from this theorem that the kernel of any non-zero LND of $k^{[2]}$ is isomorphic to $k^{[1]}$. 
The next theorem is a famous result of Miyanishi which gives the structure of the kernel of any non-zero LND on $k^{[3]}$, see \cite[Theorem 5.1]{F}.
\begin{thm}\label{miya}
Let $D (\neq 0 ) \in {\rm LND}(k^{[3]})$. Then ${\rm Ker}~D \cong k^{[2]}$.
\end{thm}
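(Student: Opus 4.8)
The goal is to show that $A := \mathrm{Ker}\,D$ is a polynomial ring in two variables. The plan is to extract enough structure on $A$ to place the affine surface $X := \mathrm{Spec}\,A$ within the scope of the classification of affine surfaces, and then to invoke the characterization of the affine plane. Throughout I may assume $k=\bar k$: forming the kernel commutes with the flat base change $k\hookrightarrow\bar k$, since $\mathrm{Ker}(D\otimes_k\bar k)=(\mathrm{Ker}\,D)\otimes_k\bar k$, and the isomorphism $\bar k\otimes_k A\cong\bar k^{[2]}$ descends to $A\cong k^{[2]}$ by the standard triviality of forms of the affine plane in characteristic zero.

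First I would record the purely algebraic structure of $A$. By \cite[Principle~1]{F}, $A$ is factorially closed in $B=k^{[3]}$ with $\td_k A=2$. This yields three facts at once: $A$ is a UFD (the kernel of an LND on a UFD is again a UFD); $A^*=A\cap B^*=k^*$, so $X$ carries no nonconstant units; and $A=B\cap\mathrm{Frac}(A)$, since $x=a/b$ with $a,b\in A$ gives $bx=a\in A$, whence $x\in A$ by factorial closedness. As $\td_k\mathrm{Frac}(A)=2$, Zariski's finiteness theorem (valid in transcendence degree at most $2$) shows $A$ is a finitely generated $k$-algebra. Hence $X=\mathrm{Spec}\,A$ is a normal affine surface with $\mathrm{Pic}(X)=0$ and $\mathcal{O}(X)^*=k^*$.

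Next I would analyze the quotient morphism $q\colon\A^3=\mathrm{Spec}\,B\to X$ induced by $A\hookrightarrow B$. Choosing a local slice $r$ of $D$ and setting $f:=D(r)\in A\setminus\{0\}$, Proposition~\ref{proplocsl} gives $B[1/f]=A[1/f][r]=A[1/f]^{[1]}$. Now $B[1/f]$ is regular, being a localization of $k^{[3]}$, and a polynomial ring $R^{[1]}$ is regular if and only if $R$ is; hence $A[1/f]$ is regular, so $X$ is smooth away from the curve $V(f)$. Since $A$ is normal of dimension two, the singular locus of $X$ is finite, so $X$ has at worst finitely many isolated singularities, all lying on $V(f)$. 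The same computation shows that the general fibre of $q$ is a single $\Ga_a$-orbit isomorphic to $\A^1$. If $D$ happened to admit a genuine slice, then $B=A[s]=A^{[1]}$ by Theorem~\ref{st}, and $A\cong k^{[2]}$ becomes exactly the two-dimensional cancellation statement $A^{[1]}\cong k^{[3]}\Rightarrow A\cong k^{[2]}$; already this special case requires the surface theory below.

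The heart of the argument, and the step I expect to be the main obstacle, is the surface geometry: upgrading ``smooth off a finite set'' to genuine smoothness of $X$, and proving $\bar\kappa(X)=-\infty$. For smoothness one studies the $\Ga_a$-action near the degenerate fibres of $q$ and uses normality together with the UFD property to exclude the a priori possible isolated singular points. For the logarithmic Kodaira dimension one shows that $X$ carries an $\A^1$-fibration: the generically trivial $\A^1$-bundle structure of $q$, combined with the structure theory of $\Ga_a$-quotients of affine surfaces, produces a morphism from $X$ to a smooth affine curve whose general fibre is $\A^1$, forcing $\bar\kappa(X)=-\infty$. With $X$ a smooth affine surface satisfying $\mathrm{Pic}(X)=0$, $\mathcal{O}(X)^*=k^*$ and $\bar\kappa(X)=-\infty$, the Miyanishi--Sugie/Fujita characterization of the affine plane yields $X\cong\A^2$, i.e. $A\cong k^{[2]}$. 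The essential difficulty is concentrated entirely in this last paragraph: the algebraic preliminaries are routine, whereas full smoothness, the computation of $\bar\kappa$, and the classification theorem for affine surfaces are the deep inputs.
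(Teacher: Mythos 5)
The paper does not prove this statement at all: Theorem \ref{miya} is quoted as Miyanishi's theorem with a pointer to \cite[Theorem 5.1]{F}, so there is no internal argument to compare yours against. Judged on its own, your outline reproduces the standard architecture of the known proof, and the preparatory steps are correct: reduction to $k=\bar{k}$, factorial closedness giving the UFD property and trivial units, $A=\mathrm{Frac}(A)\cap B$ together with Zariski's finiteness theorem giving that $A$ is affine, and the local-slice identity $B[1/f]=A[1/f][r]$ giving regularity of $A[1/f]$ and the generic $\A^1$-bundle structure of $q$.

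The genuine gap sits exactly where you place ``the heart of the argument'', and the one concrete mechanism you offer there does not work. You propose to exclude the finitely many possible singular points of $X=\mathrm{Spec}\,A$ ``using normality together with the UFD property''; but normal factorial surface singularities exist --- the origin of $x^2+y^3+z^5=0$ is a singular point of a normal affine surface whose coordinate ring is a UFD with trivial units --- so normality and factoriality alone can never yield smoothness, and one must genuinely use the embedding $A\subset k^{[3]}$. The clean modern route is descent of regularity along the faithfully flat extension $A\subset B$ (part (i) of Theorem \ref{pl}), though one must then verify that the proof of that flatness statement does not itself rest on Miyanishi's theorem. Your derivation of $\overline{\kappa}(X)=-\infty$ is also backwards as written: the generic $\A^1$-bundle structure of $q$ fibres $\A^3$ over $X$ in lines, not $X$ over a curve, and producing an $\A^1$-fibration on $X$ itself is essentially the content of Miyanishi--Sugie \emph{after} $\overline{\kappa}(X)=-\infty$ is known; the standard fix is to obtain $\overline{\kappa}(X)=-\infty$ directly from Iitaka's inequality applied to the dominant morphism $\A^3\to X$. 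In short, the proposal is a correct map of the terrain, but the two deep inputs (smoothness and the surface classification) are asserted rather than proved, and the stated reason for smoothness is false.
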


\begin{defn}{\em
The {\it rank} of $D$, denoted by $\mathrm{rk}\,D$, is defined to be the least integer $i$ for which
there exists a coordinate system $\{x_1,x_2,\dots,x_n\}$ of $B$ satisfying 
$$k[x_{i+1},\dots,x_n]\subseteq \mathrm{Ker}\,D.$$
}
\end{defn}
\noindent
It is easy to see that this definition is equivalent to the definition of rank given in the introduction. If $\mathrm{rk}\, D=0$, then $D=0$. So, for any non-zero LND  $D$, $1\leqslant \mathrm{rk}\,D\leqslant n$.

\smallskip
Let $B$ be an affine $k$-domain and $D\in {\rm LND}(B)$ with the kernel $A$ and the image $DB$. It can be proved that the intersection of the kernel and the image $A\cap DB$ is an ideal in $A$. It is called {\it plinth} ideal and is denoted by $\mathrm{pl}(D)$.
The following result called the Plinth Ideal Theorem follows from the works of Bonnet \cite{Bo} and Daigle-Kaliman \cite{DK}.
\begin{thm}\label{pl}
Let $B=k^{[3]}$, $D \in {\rm LND}(B)$ and $A={\rm Ker}~D$. Then the following hold:
\begin{enumerate}
\item [\rm (i)] $B$ is a faithfully flat $A$-module;
\item [\rm (ii)] the plinth ideal $\mathrm{pl}(D)$ is principal.
\end{enumerate}
\end{thm}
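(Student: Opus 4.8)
The plan is to treat the two assertions separately, in both cases reducing the problem to a piece of commutative algebra that is fed by the geometry of $\mathbb{G}_a$-actions on $\mathbb{A}^3$. Throughout I would use Theorem~\ref{miya} to identify $A=\mathrm{Ker}\,D$ with $k^{[2]}$; in particular $A$ is a regular ring of Krull dimension $2$ and a UFD, while $B=k^{[3]}$ is regular, hence Cohen--Macaulay, of dimension $3$, with $\mathrm{tr.deg}_A B=1$. The morphism to keep in view is the quotient map $\pi\colon\mathrm{Spec}\,B\to\mathrm{Spec}\,A$ induced by $A\hookrightarrow B$, whose fibres are the orbit closures of the $\mathbb{G}_a$-action.

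For (i) I would first recall that a flat $A$-algebra $B$ is faithfully flat exactly when $\pi$ is surjective, so it suffices to establish flatness together with surjectivity. Surjectivity is precisely Bonnet's theorem on quotient maps of algebraic $\mathbb{G}_a$-actions on $\mathbb{A}^3$ \cite{Bo}: the morphism to $\mathrm{Spec}\,A$ meets every closed point and has one-dimensional fibres. To obtain flatness I would invoke the local criterion (``miracle flatness''): for each $\mathfrak{q}\in\mathrm{Spec}\,B$ lying over $\mathfrak{p}\in\mathrm{Spec}\,A$, the local ring $A_\mathfrak{p}$ is regular and $B_\mathfrak{q}$ is Cohen--Macaulay, while the equidimensionality of $\pi$ (all fibres of dimension $1=\dim B-\dim A$) gives $\dim B_\mathfrak{q}=\dim A_\mathfrak{p}+\dim(B_\mathfrak{q}/\mathfrak{p}B_\mathfrak{q})$. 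Hence $A_\mathfrak{p}\to B_\mathfrak{q}$ is flat for every $\mathfrak{q}$, so $B$ is flat over $A$, and combined with Bonnet's surjectivity this yields faithful flatness.

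For (ii) the guiding principle is that $A\cong k^{[2]}$ is a two-dimensional regular UFD, so its divisor class group vanishes and every reflexive (divisorial) ideal is principal. Since $A$ is Noetherian, $\mathrm{pl}(D)=A\cap DB$ is a finitely generated ideal, and it is nonzero because any local slice $r$ gives $0\neq D(r)\in\mathrm{pl}(D)$. Thus it remains to prove that $\mathrm{pl}(D)$ is reflexive, i.e. that every associated prime of $A/\mathrm{pl}(D)$ has height $1$. At each height-one prime $\mathfrak{p}$ the ring $A_\mathfrak{p}$ is a DVR, so $\mathrm{pl}(D)_\mathfrak{p}$ is automatically principal and supplies the expected divisorial data; the real content is to exclude associated primes of height $2$. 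Here I would study the cokernel $B/DB$ as an $A$-module, using that $\mathrm{Supp}_A(B/DB)=V(\mathrm{pl}(D))$ -- indeed, for $\mathfrak{p}\not\supseteq\mathrm{pl}(D)$ the derivation $D$ acquires a slice after inverting an element of $\mathrm{pl}(D)\setminus\mathfrak{p}$ by Proposition~\ref{proplocsl}, hence becomes surjective on $B_\mathfrak{p}$ -- and using the depth/flatness information from part (i) to control this cokernel along the degeneracy locus; this is the purity statement established by Daigle and Kaliman \cite{DK}. Granting that $V(\mathrm{pl}(D))$ is pure of codimension one without embedded components, $\mathrm{pl}(D)$ is reflexive and therefore principal.

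The main obstacle is not the commutative-algebra packaging but the two geometric inputs it rests on: Bonnet's surjectivity and equidimensionality of $\pi$, which controls the degenerate fibres of the $\mathbb{G}_a$-action and is special to dimension three, and the Daigle--Kaliman purity of the degeneracy locus $V(\mathrm{pl}(D))$, which is exactly what forbids $\mathrm{pl}(D)$ from carrying a height-two (in particular $\mathfrak{m}$-primary) component. Once these are available, the local flatness criterion delivers (i) and the UFD property of $k^{[2]}$ delivers (ii).
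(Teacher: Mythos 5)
The paper offers no proof of Theorem~\ref{pl} to compare against: it is imported verbatim from Bonnet \cite{Bo} and Daigle--Kaliman \cite{DK}. Your sketch is a reasonable reconstruction of how those references obtain the result, and the commutative-algebra packaging is sound: miracle flatness plus surjectivity gives (i), and divisoriality plus triviality of the class group of $A\cong k^{[2]}$ (together with $\mathrm{pl}(D)\neq 0$, witnessed by a local slice) gives (ii). But observe that both genuinely hard steps are deferred to exactly the papers the theorem is cited from -- surjectivity of $\mathrm{Spec}\,B\to\mathrm{Spec}\,A$ is Bonnet's theorem, and the exclusion of height-two associated primes of $\mathrm{pl}(D)$ is the Daigle--Kaliman purity statement -- so as a proof your text has the same logical status as the paper's citation. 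One imprecision worth fixing: equidimensionality of the fibres is not part of Bonnet's statement and is not free. What one actually needs is that no fibre of $\pi$ has a two-dimensional component, and this follows from the factorial closedness of $A$ in $B$: a divisor contracted to a point of $\mathrm{Spec}\,A$ would give an irreducible $h\in B$ dividing two generators of a height-two prime of $A$, whence $h\in A$ and that prime would have height one, a contradiction. Only with this in hand does the dimension equality $\dim B_{\mathfrak{q}}=\dim A_{\mathfrak{p}}+\dim\bigl(B_{\mathfrak{q}}/\mathfrak{p}B_{\mathfrak{q}}\bigr)$ hold at every prime, so that the local criterion applies. With that caveat, nothing in your outline is wrong; it is an annotated reduction to \cite{Bo} and \cite{DK} rather than a self-contained argument, which is consistent with how the paper itself treats the statement.
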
 
\smallskip
\noindent
\begin{defn}
{\em 
Let $B=k^{[3]}$, $D \in {\rm LND}(B)$. A local slice $r$ is called a {\it minimal local slice} of $D$ if $D(r)$ is a generator of the plinth ideal $A \cap DB$.
}
\end{defn}

\smallskip
\noindent
The following result due to Freudenburg characterizes rank $1$ LNDs of $k^{[n]}$, see \cite{F95}.
\begin{prop}\label{fr95c}
Up to a change of coordinates, every locally nilpotent derivation $D$ of $rank~1$ on $k[x_1, \dots , x_n]$ is of the form $D=h\dfrac{\partial}{\partial x_n}$ where $h \in k[x_1, \dots ,x_{n-1}]$.
\end{prop}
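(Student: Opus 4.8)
The plan is to read off a convenient coordinate system from the definition of rank and then pin down the single coefficient of $D$ in that system. Since $\mathrm{rk}\,D=1$ and $D\neq 0$, there is a coordinate system $\{y_1,\dots,y_n\}$ of $k^{[n]}$ with $k[y_2,\dots,y_n]\subseteq{\rm Ker}\,D$. Thus $D(y_i)=0$ for $2\leqslant i\leqslant n$, and since $D$ is a $k$-derivation, for every $P\in k[y_1,\dots,y_n]$ we have $D(P)=D(y_1)\,\partial P/\partial y_1$. Setting $f:=D(y_1)$, this says $D=f\,\partial/\partial y_1$; moreover $f\neq 0$, for otherwise $D$ would annihilate the whole coordinate system and hence vanish, contradicting $\mathrm{rk}\,D=1$.

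The crux is to show that $f$ does not involve $y_1$, i.e. $\deg_{y_1}f=0$, so that $f\in k[y_2,\dots,y_n]$. I would argue by contradiction: suppose $d:=\deg_{y_1}f\geqslant 1$, and regard $B=R[y_1]$ as a polynomial ring in $y_1$ over the domain $R:=k[y_2,\dots,y_n]$. The key observation is that, because $\mathrm{char}\,k=0$, for any $g$ with $\deg_{y_1}g=e\geqslant 1$ the leading $y_1$-coefficient of $\partial g/\partial y_1$ is $e$ times that of $g$ and hence nonzero; as $R$ is a domain, no cancellation occurs in the product, giving $\deg_{y_1}D(g)=\deg_{y_1}\!\big(f\,\partial g/\partial y_1\big)=d+e-1$. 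Applying this repeatedly starting from $y_1$ yields, by induction, $\deg_{y_1}D^m(y_1)=1+m(d-1)\geqslant 1$ for all $m\geqslant 0$. In particular $D^m(y_1)\neq 0$ for every $m$, contradicting local nilpotency of $D$. Note that this single estimate handles both the case $d\geqslant 2$, where the degrees strictly increase, and the borderline case $d=1$, where the degree stays equal to $1$ but the iterates never vanish. Hence $d=0$ and $f\in k[y_2,\dots,y_n]$.

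Finally I would relabel the coordinates to match the stated normal form: put $x_n:=y_1$ and $x_i:=y_{i+1}$ for $1\leqslant i\leqslant n-1$, and set $h:=f$. Then $\{x_1,\dots,x_n\}$ is again a coordinate system of $k^{[n]}$, we have $h\in k[x_1,\dots,x_{n-1}]$, and $D=h\,\partial/\partial x_n$, which is exactly the required form.

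I expect the only real obstacle to be the middle step---forcing $\deg_{y_1}f=0$---where the use of characteristic zero is essential: it guarantees that differentiating in $y_1$ lowers the degree by exactly one (the leading coefficient does not get killed), and it is precisely this that rules out the otherwise delicate case $\deg_{y_1}f=1$. The outer two steps are purely formal bookkeeping with coordinate changes.
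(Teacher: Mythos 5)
The paper does not prove this proposition; it is quoted as a known result of Freudenburg with a citation to \cite{F95}, so there is no in-paper argument to compare against. Your proof is correct and self-contained: the reduction to $D=f\,\partial/\partial y_1$ from the definition of rank is immediate, and the $y_1$-degree count $\deg_{y_1}D^m(y_1)=1+m(d-1)\geqslant 1$ correctly rules out $d\geqslant 1$, including the borderline case $d=1$, using characteristic zero exactly where it is needed. Your middle step is in effect a direct verification, for $D=\partial/\partial y_1$, of the general principle (Principle 7 in \cite{F}) that for a nonzero $D\in{\rm LND}(B)$ and $f\neq 0$, the derivation $fD$ is locally nilpotent if and only if $f\in{\rm Ker}\,D$; citing that principle would shorten the argument, but your explicit degree computation is a perfectly valid substitute.
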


\smallskip
\noindent
We next state an important result on the intersection of kernels of LNDs of $k^{[3]}$ \cite[Theorem~5.13]{F}.
\begin{thm}\label{3varint}
Let $D_1,D_2 \in {\rm LND}(k^{[3]})$. \mbox{Then exactly one of the following statements holds:}
\begin{enumerate}
\item [\rm (i)] ${\rm Ker}~D_1 \cap {\rm Ker}~D_2=k$;
\item [\rm (ii)] there exist $f,g,h \in B$ such that ${\rm Ker}~D_1=k[f,g]$, ${\rm Ker}~D_2=k[f,h]$ and ${\rm Ker}~D_1 \cap {\rm Ker}~D_2=k[f]$;
\item [\rm (iii)] ${\rm Ker}~D_1 ={\rm Ker}~D_2$.
\end{enumerate}
\end{thm}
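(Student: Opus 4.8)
The plan is to analyze the subalgebra $R := \mathrm{Ker}\,D_1 \cap \mathrm{Ker}\,D_2$ according to its transcendence degree over $k$. Writing $A_i = \mathrm{Ker}\,D_i$, each $A_i$ is factorially closed in $B$, and an intersection of factorially closed subalgebras is again factorially closed; hence $R$ is factorially closed in $B$. In particular $R$ is algebraically closed in $B$ and integrally closed in its own fraction field, with $R^* = B^* = k^*$ (see Principle~1 of \cite{F}). Since $R \subseteq A_1$ and $\td_k A_1 = 2$ by Theorem~\ref{miya}, we have $d := \td_k R \in \{0,1,2\}$. I would show that $d = 0$, $d = 1$, $d = 2$ correspond respectively to alternatives (i), (ii), (iii); as these three values are mutually exclusive and exhaustive, exactly one alternative holds.

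The two extreme cases are quick. If $d = 0$, then every element of $R$ is algebraic over $k$; since $k$ is algebraically closed in $B = k^{[3]}$, this forces $R = k$, which is (i). If $d = 2$, then $\mathrm{Frac}(A_1)$ is algebraic over $\mathrm{Frac}(R)$, and as $R$ is algebraically closed in $B \supseteq A_1$ every element of $A_1$ lies in $R$; thus $A_1 = R$, and symmetrically $A_2 = R$, so $\mathrm{Ker}\,D_1 = \mathrm{Ker}\,D_2$, which is (iii).

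The substance is the case $d = 1$. First I would identify $R$: since $\mathrm{Frac}(R)$ has transcendence degree $1$ and sits inside the purely transcendental field $\mathrm{Frac}(B)$, L\"uroth's theorem gives $\mathrm{Frac}(R) = k(f)$ for some $f \in R$; combined with normality, triviality of units, and standard facts about one-dimensional factorially closed subalgebras, one obtains $R = k[f]$. It then remains to prove the key point, namely that $f$ is a \emph{variable} of $A_1$ (and, by symmetry, of $A_2$), for then $A_1 = k[f,g]$, $A_2 = k[f,h]$ and $R = k[f]$, which is exactly (ii). To this end I would try to exhibit a nonzero locally nilpotent derivation of $A_1 \cong k^{[2]}$ whose kernel is precisely $k[f]$: by Rentschler's theorem (Theorem~\ref{R}), the kernel of any nonzero LND of $k^{[2]}$ is generated by a variable, so this would immediately make $f$ a coordinate of $A_1$. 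The natural source of such a derivation is $D_2$: localizing at $S = k[f]\setminus\{0\}$ and passing to the generic fibre over $K := k(f)$, Principle~9 of \cite{F} yields induced LNDs with kernels $S^{-1}A_1$ and $S^{-1}A_2$ meeting only in $K$, and the task is to descend this picture to an actual LND of $A_1$ with kernel $k[f]$.

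I expect this last descent to be the main obstacle. The difficulty is genuine: a factorially closed subalgebra of $k^{[2]}$ of transcendence degree $1$ need not be generated by a variable (for instance $k[u^2+v^3] \subset k[u,v]$ is factorially closed but its generator is not a coordinate), so the conclusion that $f$ is a variable cannot follow from properties of $R$ alone and must genuinely use that $A_1$ and $A_2$ are kernels of LNDs of $k^{[3]}$. Geometrically, the generic fibre of $f$ carries two independent $\mathbb{G}_a$-actions coming from $D_1$ and $D_2$, and one must show that their quotients are affine lines; ruling out higher-genus fibres is where the structure of $\mathbb{A}^3$ enters, via the results underlying the Plinth Ideal Theorem (Theorem~\ref{pl}). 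Once $f$ is known to be a variable of both kernels, alternative (ii) follows and the proof is complete.
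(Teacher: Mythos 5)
First, a point of reference: the paper does not prove this statement at all. Theorem \ref{3varint} is quoted from Freudenburg's book \cite[Theorem~5.13]{F} as a known result, so there is no in-paper argument to compare yours against; I am judging your attempt on its own merits.

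Your skeleton is the right one and the two extreme cases are correct. Writing $R=\mathrm{Ker}\,D_1\cap\mathrm{Ker}\,D_2$, the observations that $R$ is factorially closed, hence algebraically closed in $B$ with $R^*=k^*$, that $d=\td_k R\in\{0,1,2\}$ for nonzero $D_1,D_2$ (by Theorem \ref{miya}), and that $d=0$ and $d=2$ give alternatives (i) and (iii), are all fine, as is the mutual exclusivity. The identification $R=k[f]$ in the case $d=1$ (L\"uroth in the Gordan--Igusa form for transcendence degree one subfields of $k(x,y,z)$, plus normality and triviality of units forcing the smooth model to be $\mathbb{P}^1$ minus a single point) is standard and acceptable, modulo verifying finite generation of $R$.

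The genuine gap is that the proof stops exactly where the theorem has content. Alternative (ii) asserts that $f$ is a coordinate of each kernel, i.e.\ $A_1=k[f,g]$ and $A_2=k[f,h]$, and you do not establish this: you explicitly flag it as ``the main obstacle'' and gesture at possible tools (descent from the generic fibre, ruling out higher-genus fibres, the Plinth Ideal Theorem) without carrying any of them out. Worse, the specific mechanism you propose is not available: you want to manufacture a nonzero LND of $A_1$ with kernel $k[f]$ ``from $D_2$'' and then invoke Theorem \ref{R}, but $D_1$ and $D_2$ are \emph{not} assumed to commute in this theorem, so $D_2$ does not preserve $A_1$ and induces no derivation of $A_1$ or of $S^{-1}A_1$; the localized picture you describe contains LNDs of $S^{-1}B$ with kernels $S^{-1}A_1$ and $S^{-1}A_2$, but nothing acting on $A_1$ itself. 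Your own example $k[u^2+v^3]\subset k[u,v]$ shows that no property of $R$ alone (factorial closedness, normality, trivial units) can force $f$ to be a coordinate of $A_1$, so a substantive new input is unavoidable here --- in Freudenburg's proof this is where the fine structure of kernels of LNDs of $k^{[3]}$ (local slices, $B[1/a]=A[1/a][r]$, and an Abhyankar--Moh--Suzuki-type characterization of variables of $k^{[2]}$) enters. As written, the central claim of case (ii) is asserted rather than proved, so the attempt is incomplete.
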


\smallskip
\noindent
We are particularly interested in the case when two LNDs commute. The next result due to Daigle and Kaliman \cite[Theorem 3]{F} describes the intersection of kernels of two commuting non-equivalent LNDs of $k^{[3]}$. It was earlier observed by Maubach \cite[Theorem 3.6]{Ma} for $k=\bC$.
\begin{prop}\label{frcom}
If $D,E \in {\rm LND}(k^{[3]})$ are non-zero, commuting and have distinct kernels, then there exists a variable $f \in k^{[3]}$ such that ${\rm Ker}~D \cap {\rm Ker}~E=k[f]$.
\end{prop}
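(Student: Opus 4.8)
The plan is to combine Theorem~\ref{3varint} with the extra structure coming from commutativity. Since $D$ and $E$ commute and have distinct kernels, we are in case (ii) of Theorem~\ref{3varint}: there exist $f,g,h\in B$ with ${\rm Ker}\,D=k[f,g]$, ${\rm Ker}\,E=k[f,h]$, and ${\rm Ker}\,D\cap{\rm Ker}\,E=k[f]$. So the intersection is already known to be a polynomial ring $k^{[1]}$; the entire content of the proposition is the \emph{sharper} claim that the generator $f$ can be taken to be a \emph{variable} of $k^{[3]}$, i.e. that it extends to a full coordinate system. That refinement is exactly where commutativity must be used, since Theorem~\ref{3varint}(ii) alone gives no such conclusion.

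First I would exploit commutativity to move $E$ into the kernel of $D$. Because $[D,E]=0$, the operator $E$ maps ${\rm Ker}\,D$ into itself, so $E$ restricts to a derivation $\bar E$ on $A:={\rm Ker}\,D=k[f,g]\cong k^{[2]}$; moreover $Ef=0$ and $Eg\in A$, and $\bar E$ is locally nilpotent since $E$ is. By the same token $D$ restricts to a locally nilpotent derivation on ${\rm Ker}\,E$. The key observation is that $f\in{\rm Ker}\,\bar E$: indeed $f\in{\rm Ker}\,D\cap{\rm Ker}\,E$, so $f$ lies in the kernel of the restricted LND $\bar E$ on the plane $A$. Now I invoke Rentschler's theorem (Theorem~\ref{R}) applied to the two-variable ring $A=k[f,g]$: a nonzero LND on $k^{[2]}$ has kernel isomorphic to $k^{[1]}$, and that kernel is generated by a \emph{coordinate} of $A$. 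Thus, after a tame change of coordinates on the plane $k[f,g]$, the element $f$ (up to the usual normalisation) is a variable of $A$, meaning $k[f,g]=k[f',g']$ with $f'$ a coordinate and $k[f']={\rm Ker}\,\bar E$; replacing $f$ by $f'$ we may assume $f$ is a variable of the plane $A$.

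The remaining step is to promote ``$f$ is a variable of the plane $A\cong k^{[2]}$'' to ``$f$ is a variable of the ambient $k^{[3]}$''. Here I would use the structure of the kernel inside the three-dimensional ring: by Miyanishi's theorem (Theorem~\ref{miya}), $A={\rm Ker}\,D\cong k^{[2]}$, and the point is that a coordinate of $A$ that also generates the plinth-type line ${\rm Ker}\,D\cap{\rm Ker}\,E$ is a variable of $B=k^{[3]}$. Concretely, since $f$ is a coordinate of $A=k[f,g]$, the pair $\{f,g\}$ is a coordinate system of the kernel plane; I would then argue that $\{f,g,s\}$ is a coordinate system of $B$ for a suitable $s$, using that $B$ is generated over $A$ by a local slice of $D$ once we localise (Proposition~\ref{proplocsl}), together with the Slice Theorem (Theorem~\ref{st}) and the faithful flatness in Theorem~\ref{pl}(i) to control denominators. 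The cleanest route is: a coordinate of the kernel $k^{[2]}$ of an LND of $k^{[3]}$ is automatically a coordinate of $k^{[3]}$, because such an $f$ is a variable annihilated by a nonzero LND, and one can build the third coordinate from a slice of $D$ on the localisation $k[f]^{-1}$-part.

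The main obstacle I anticipate is precisely this last ascent from a planar coordinate to an ambient coordinate: being a variable of the kernel $A\cong k^{[2]}$ is not \emph{a priori} the same as being a variable of $B=k^{[3]}$, and bridging the gap requires genuine input (the epimorphism/variable behaviour of coordinates in $k^{[3]}$, controlled via a slice and faithful flatness). Everything before that—reducing to case (ii), restricting $E$ to ${\rm Ker}\,D$, and applying Rentschler to make $f$ a coordinate of the plane—is formal once commutativity is invoked. I would expect the authors to package the final ascent using a known result that a coordinate of the kernel of a nonzero LND on $k^{[3]}$ is a coordinate of $k^{[3]}$ (a consequence of the two-dimensional Slice Theorem applied after localising at a slice of $D$).
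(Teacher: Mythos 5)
First, a point of reference: the paper does not prove this proposition at all --- it is quoted from Daigle--Kaliman (as presented in \cite[Theorem 3]{F}) and Maubach, so there is no in-paper proof to compare against. Judged on its own terms, your proposal is sound up to the last step: restricting $E$ to $A=\mathrm{Ker}\,D\cong k^{[2]}$ gives a nonzero LND $\bar E$ of the plane (nonzero because $\bar E=0$ would force $\mathrm{Ker}\,D\subseteq\mathrm{Ker}\,E$ and hence, both kernels being algebraically closed of transcendence degree $2$, equality of the kernels), and Rentschler's theorem identifies $\mathrm{Ker}\,D\cap\mathrm{Ker}\,E=\mathrm{Ker}\,\bar E=k[f]$ with $f$ a coordinate of $A$. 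This also makes your appeal to Theorem~\ref{3varint} redundant; note moreover that distinctness of kernels alone does not place you in case (ii) of that theorem --- it is exactly this restriction argument that rules out case (i).

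The genuine gap is the ascent from ``$f$ is a variable of $A$'' to ``$f$ is a variable of $B$''. Your ``cleanest route'' --- that a coordinate of the kernel of an LND of $k^{[3]}$ is automatically a coordinate of $k^{[3]}$ --- is false, and Section~\ref{rk3der} of this very paper supplies the counterexample: for Freudenburg's rank-$3$ derivation $\Delta$ the kernel is $k[u,v]\cong k^{[2]}$, so $u$ and $v$ are variables of the kernel, yet by the very definition of rank $3$ no variable of $B$ lies in $\mathrm{Ker}\,\Delta$. For the same reason the localisation $B[1/D(r)]=A[1/D(r)][r]$ of Proposition~\ref{proplocsl} cannot be descended (by faithful flatness or otherwise) to a coordinate system $\{f,g,s\}$ of $B$ itself; that would force $D$ to have rank at most $2$. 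The correct bridge is Lemma~\ref{dtriang}(ii), which requires exhibiting some $g\in B$ with $D(g)\in k[f]\setminus\{0\}$ --- and this is where commutativity must enter a second time: $D$ restricts to a nonzero LND of $\mathrm{Ker}\,E\cong k^{[2]}$ whose kernel is exactly $\mathrm{Ker}\,D\cap\mathrm{Ker}\,E=k[f]$, so any local slice $g$ of that restriction satisfies $D(g)\in k[f]\setminus\{0\}$. You mention this second restriction in passing but never use it; inserting it and then invoking Lemma~\ref{dtriang}(ii) closes the gap, and is precisely the mechanism the paper itself deploys in the proof of Proposition~\ref{rk2com1}.
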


\smallskip
\noindent
The following result involving local slices of LNDs on $k^{[3]}$ is due to Daigle \cite[Lemma 2.5]{D10}.
\begin{lem}\label{dtriang}
Let $B=k^{[3]}$, $D (\neq 0) \in {\rm LND}(B)$, $A:={\rm Ker}~D$ and $f \in A$. Then the following hold:
\begin{enumerate}
\item [\rm (i)] if $f$ is a variable of $B$, then it is a variable of $A$;
\item [\rm (ii)] if $f$ is a variable of $A$ and there exists $g \in B$ such that $D(g) \in k[f] \setminus \{0\}$, then $f$ is a variable of $B$.
\end{enumerate}
\end{lem}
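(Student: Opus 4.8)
The plan is to reduce both statements to the detection of a coordinate by means of a residue ring, combining Miyanishi's theorem (Theorem~\ref{miya}) with the Abhyankar--Moh--Suzuki theorem, which asserts that an element $f$ of $k^{[2]}$ is a variable if and only if $k^{[2]}/(f)\cong k^{[1]}$. Throughout, the basic mechanism is that $f\in A={\rm Ker}\,D$ means $D(f)=0$, so $D$ maps the ideal $fB$ into itself and hence descends to a locally nilpotent derivation $\bar D$ on $B/fB$. Since $A$ is factorially closed in $B$ and $f\in A$, one checks $fB\cap A=fA$, so the induced map $A/fA\to B/fB$ is injective with image contained in ${\rm Ker}\,\bar D$.

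For part~(i), assume $f$ is a variable of $B$, so $B/fB\cong k^{[2]}$. Replacing $D$ by $D/f^m$ for the largest $m$ with $f^m\mid DB$ (which does not change $A$) I may assume $\bar D\neq 0$. By Rentschler's theorem (Theorem~\ref{R}) applied to $\bar D$, the kernel ${\rm Ker}\,\bar D$ is isomorphic to $k^{[1]}$ and is generated by a variable of $B/fB$. Combined with the injection above, $A/fA$ is a one-dimensional domain sitting inside $k^{[1]}$ (it is a domain: since $f$ is a variable of $B$ it is irreducible in $B$, and as $A$ is factorially closed $f$ remains irreducible, hence prime, in the UFD $A\cong k^{[2]}$). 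It remains to prove that this inclusion is an equality, i.e. $A/fA={\rm Ker}\,\bar D\cong k^{[1]}$; the Abhyankar--Moh--Suzuki theorem then shows $f$ is a variable of $A$. To obtain the equality I would use a minimal local slice $r$ of $D$, with $D(r)=s$ a generator of the plinth ideal (principal by Theorem~\ref{pl}), and Proposition~\ref{proplocsl}, which gives $B[1/s]=A[1/s][r]$; expanding a representative $\beta$ of a class in ${\rm Ker}\,\bar D$ in powers of $r$ and using $D(\beta)\in fB$ should let one correct $\beta$ modulo $fB$ into an element of $A$.

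For part~(ii), write $A=k[f,h]$ (possible since $f$ is a variable of $A$) and suppose $D(g)=p(f)\in k[f]\setminus\{0\}$. Passing to the generic fibre over $k[f]$, i.e. localizing at $S=k[f]\setminus\{0\}$ and setting $K=k(f)$, the element $p(f)$ becomes a nonzero scalar, so $g$ is a genuine slice of the induced locally nilpotent derivation on $B_K:=S^{-1}B$, whose kernel is $A_K=K[h]=K^{[1]}$. By the Slice Theorem (Theorem~\ref{st}), $B_K=K[h][g]\cong K^{[2]}$; in particular $f,h,g$ are algebraically independent and $f$ is a variable of $B$ \emph{generically}. To upgrade this to a variable of $B$ itself I would view $B$ as a $k[f]$-algebra and apply Sathaye's theorem on $\mathbb{A}^2$-fibrations: once one knows that every fibre $B/(f-c)B$ is isomorphic to $k^{[2]}$, faithful flatness of $B$ over $k[f]$ (via Theorem~\ref{pl}(i) and $A=k[f][h]$) together with the generic triviality forces $B\cong k[f]^{[2]}$, whence $f$ is a variable of $B$. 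For $c$ with $p(c)\neq 0$ the reduction $\bar g$ is again a slice of the reduced derivation $\bar D_c$, so the fibre is $({\rm Ker}\,\bar D_c)^{[1]}$, and one is reduced, exactly as in part~(i), to identifying ${\rm Ker}\,\bar D_c$ with $A/(f-c)A\cong k^{[1]}$.

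The main obstacle in both parts is the same: proving that the induced inclusion $A/fA\hookrightarrow {\rm Ker}\,\bar D$ (respectively $A/(f-c)A\hookrightarrow{\rm Ker}\,\bar D_c$) is surjective, equivalently that passing to the residue modulo $f$ does not enlarge the ring of $D$-invariants. Everything else is a formal consequence of Rentschler's and Miyanishi's theorems together with the coordinate criteria of Abhyankar--Moh--Suzuki and Sathaye. I expect this surjectivity to be the only place where the hypotheses (\,$f$ a variable of $B$ in~(i); the existence of the local slice $g$ with $D(g)\in k[f]\setminus\{0\}$ in~(ii)\,) are genuinely used, and I would establish it through the local-slice expansion furnished by Proposition~\ref{proplocsl} and the principality of the plinth ideal.
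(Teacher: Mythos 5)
The paper itself offers no proof of this lemma --- it is quoted from Daigle \cite[Lemma~2.5]{D10} --- so your argument must stand on its own, and part~(i) rests on a step that is not just unproven but false. The identity you single out as ``the main obstacle'', the surjectivity of $A/fA\hookrightarrow \mathrm{Ker}\,\bar D$, fails in general: take $D=2y\frac{\partial}{\partial z}-x\frac{\partial}{\partial y}$ on $B=k[x,y,z]$ and $f=x$. Then $D$ is a nonzero LND with $A=\mathrm{Ker}\,D=k[x,\,y^{2}+xz]$ (by Theorem~\ref{df}), the induced derivation on $B/xB=k[y,z]$ is $\bar D=2y\frac{\partial}{\partial z}$ with $\mathrm{Ker}\,\bar D=k[y]$, while the image of $A/xA$ is only $k[y^{2}]$. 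Hence no local-slice manipulation will ``correct $\beta$ modulo $fB$ into an element of $A$'', and since a one-dimensional affine subring of $k[t]$ need not be a polynomial ring (e.g.\ $k[t^{2},t^{3}]$), the mere inclusion does not feed into Abhyankar--Moh--Suzuki. The detour is also unnecessary: if $f$ is a variable of $B$, write $B=k[f][y,z]$ and apply Theorem~\ref{df} with $R=k[f]$ to get $\mathrm{Ker}\,D=k[f][F]$, so that $\{f,F\}$ is a coordinate system of $A$ and part~(i) is immediate from results already quoted in the paper.

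For part~(ii) your outline (generic triviality via the Slice Theorem~\ref{st}, then descent through an $\mathbb{A}^{2}$-fibration over $k[f]$ using Sathaye and Bass--Connell--Wright) is essentially the strategy behind the Daigle--Kaliman result \cite{DK}, and at the fibres $f=c$ with $p(c)\neq 0$ the identification $B/(f-c)B=(A/(f-c)A)[\bar g]\cong k^{[2]}$ does go through, because Proposition~\ref{proplocsl} gives $p(f)^{N}b\in A[g]$ for every $b\in B$ and $p(f)$ specializes to the unit $p(c)$. But the finitely many fibres over the roots of $p$ are precisely where $\bar g$ ceases to be a slice; your argument says nothing about them, and the Sathaye--Asanuma machinery requires \emph{all} fibres to be planes. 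Controlling those special fibres is the actual content of the statement, so as written neither part is established.
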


\smallskip
\noindent
The next result due to Freudenburg \cite[Proposition 1]{F95} is called the {\it Restriction /extension} principle.
\begin{prop}\label{frext}
Let $D \in {\rm LND}(k[x_1, \dots ,x_n])$. If $D$ restricts to a derivation on $k[x_1, \dots , x_{n-1}]$. Then $Dx_n \in k[x_1, \dots , x_{n-1}]$.
\end{prop}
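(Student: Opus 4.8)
The plan is to write $R=k[x_1,\dots,x_{n-1}]$ and $B=R[x_n]$, and to exploit the hypothesis in the form $D(R)\subseteq R$. Since $D$ is locally nilpotent on $B$ and $D(R)\subseteq R$, the restriction $D_R:=D|_R$ is a well-defined LND of $R$, so its degree function $\deg_{D_R}$ is available with the usual properties (i)--(ii). Writing $c:=D(x_n)\in B$ and expanding every element of $B$ as a polynomial in $x_n$ with coefficients in $R$, one has the decomposition $D=\partial_R+c\frac{\partial}{\partial x_n}$, where $\partial_R$ is the derivation applying $D_R$ coefficientwise (so $\partial_R(x_n)=0$). Concretely, $D(f)=\partial_R(f)+\frac{\partial f}{\partial x_n}\,c$ for all $f\in B$. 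Let $d:=\deg_{x_n}c$ and write $c=c_d x_n^d+(\text{lower order in }x_n)$ with $c_d\in R$. The goal $D(x_n)\in R$ is exactly the assertion $d\le 0$; assuming $c\ne 0$ and $d\ge 1$, I would derive a contradiction by showing $D^m(x_n)\ne 0$ for every $m$.

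First I would treat the case $d\ge 2$ using the $x_n$-degree. Since $\deg_{x_n}\partial_R(f)\le\deg_{x_n}f$ while $\deg_{x_n}\!\big(\frac{\partial f}{\partial x_n}\,c\big)=\deg_{x_n}f+d-1>\deg_{x_n}f$, the second summand strictly dominates, giving $\deg_{x_n}D(f)=\deg_{x_n}f+d-1$ with $x_n$-leading coefficient $(\deg_{x_n}f)\cdot\ell(f)\cdot c_d$, where $\ell(f)$ denotes the $x_n$-leading coefficient of $f$; here characteristic zero and the fact that $R$ is a domain guarantee that this leading coefficient does not vanish. Starting from $f=x_n$ and iterating, $\deg_{x_n}D^m(x_n)=1+m(d-1)$ is strictly increasing and the leading coefficient is a nonzero integer multiple of $c_d^{\,m}$, so $D^m(x_n)\ne 0$ for all $m$, contradicting local nilpotency.

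The remaining and genuinely delicate case is $d=1$, which is where I expect the main obstacle: now $\deg_{x_n}D(f)\le\deg_{x_n}f$, so the $x_n$-degree no longer grows and one must instead follow the leading coefficient itself. Writing $c=c_1x_n+c_0$ with $c_1\in R\setminus\{0\}$, the degree bound forces $\deg_{x_n}D^m(x_n)\le 1$, so let $\alpha_m\in R$ denote the coefficient of $x_n$ in $D^m(x_n)$. A direct computation from $D(f)=\partial_R(f)+\frac{\partial f}{\partial x_n}c$ yields the recursion $\alpha_{m+1}=D_R(\alpha_m)+c_1\alpha_m$ with $\alpha_0=1$, i.e. $\alpha_m=(D_R+c_1)^m(1)$, where $c_1$ denotes the multiplication operator. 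It remains to prove $\alpha_m\ne 0$ for all $m$, and this is exactly where the LND structure of $D_R$ on $R$ enters through $\deg_{D_R}$.

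To finish the case $d=1$ I would split according to $t:=\deg_{D_R}(c_1)\ge 0$. If $t=0$, then $c_1\in{\rm Ker}\,D_R$, so $D_R$ commutes with multiplication by $c_1$ and hence $\alpha_m=\sum_j\binom{m}{j}c_1^{\,m-j}D_R^{\,j}(1)=c_1^{\,m}\ne 0$. If $t\ge 1$, then by additivity of $\deg_{D_R}$ on products one has $\deg_{D_R}(c_1 f)=t+\deg_{D_R}(f)>\deg_{D_R}(D_R f)$, so property (i) gives $\deg_{D_R}((D_R+c_1)f)=t+\deg_{D_R}(f)$; inducting from $\deg_{D_R}(1)=0$ shows $\deg_{D_R}(\alpha_m)=mt$ is finite, whence $\alpha_m\ne 0$. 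In either subcase $D^m(x_n)\ne 0$ for all $m$, contradicting local nilpotency and forcing $d=0$, i.e. $D(x_n)\in k[x_1,\dots,x_{n-1}]$. The only subtle points to check carefully are the non-cancellation of leading coefficients (for which characteristic zero is essential) and the legitimacy of $\deg_{D_R}$, both guaranteed by $D_R$ being an honest LND of the domain $R$.
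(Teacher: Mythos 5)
Your argument is correct and complete, but note that the paper itself supplies no proof of this proposition: it is quoted verbatim from Freudenburg \cite[Proposition 1]{F95} as a known ``Restriction/extension principle,'' so there is no in-paper argument to compare against. Your decomposition $D=\partial_R+c\,\frac{\partial}{\partial x_n}$ with $c=D(x_n)$ is the right normal form, and the case split on $d=\deg_{x_n}c$ is handled soundly: for $d\geqslant 2$ the $x_n$-degree of $D^m(x_n)$ grows as $1+m(d-1)$ with leading coefficient a nonzero integer multiple of $c_d^{\,m}$ (characteristic zero and integrality of $R$ prevent cancellation), and for $d=1$ the recursion $\alpha_{m+1}=D_R(\alpha_m)+c_1\alpha_m$ is correctly derived, with the two subcases $\deg_{D_R}(c_1)=0$ (commuting operators, $\alpha_m=c_1^{\,m}$) and $\deg_{D_R}(c_1)\geqslant 1$ (strict dominance of $\deg_{D_R}(c_1\alpha_m)$ over $\deg_{D_R}(D_R\alpha_m)$, so $\deg_{D_R}(\alpha_m)=mt$ is finite) each forcing $\alpha_m\neq 0$. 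Both contradictions with local nilpotency are legitimate, and the only facts you invoke about $\deg_{D_R}$ --- subadditivity with equality when degrees differ, and additivity on products --- are exactly the properties (i)--(ii) the paper records for $\deg_D$. The one presentational gap is that you assert rather than verify that $\deg_{D_R}(f+g)=\max\{\deg_{D_R}f,\deg_{D_R}g\}$ when the two degrees differ; this follows from (i) applied to $f=(f+g)-g$ and is standard, so it does not affect correctness.
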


\smallskip
\indent
Now let us consider the polynomial ring in $n$ variables $R^{[n]}$ over a ring $R$. By an $R$-derivation, we mean a derivation vanishing on $R$. If we are given $n-1$ polynomials $f_1,\ldots, f_{n-1}$, we can consider the Jacobian derivation $\delta={\rm Jac}(f_1,\ldots, f_{n-1},\cdot)$ such that
$$
\delta(h)={\rm Jac}(f_1,\ldots, f_{n-1},h)=\det J(f_1,\ldots, f_{n-1},h),
$$
where $J$ is the Jacobian matrix. The Jacobian derivation is always an $R$-derivation, but it is not always locally nilpotent.

\smallskip
\indent
Let us recall the concept of local slice construction introduced in~\cite{Fr97}. Put $B=k^{[3]}$. Suppose $D\in\mathrm{LND}(B)$ is irredducible and there exist such $f,g,r\in B$, $r\notin gB$ and $P\in k^{[1]}$ that $\mathrm{Ker}\,D=k[f,g]$ and $D(r)=gP(f)\neq 0$. Then there exists $\phi\in k[f]^{[1]}$ such that $\phi(r)\in gB$. We assume $\phi$ to be of minimal $r$ degree with such property. Also assume $\phi(r)$ to be irreducible in $k[f,r]$. Denote $h=\phi(f)g^{-1}\in B$. 
\begin{thm}\cite[Theorem~2]{Fr97}\label{locslcon}

(a) $\Delta={\rm Jac}(f,h,\cdot)$ is an LND of $B$;

(b) $\Delta(r)=-h P(f)$;

(c) If $\Delta$ is irreducible, then $\mathrm{Ker}\,\Delta=k[f,h]$.
\end{thm}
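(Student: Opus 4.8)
My plan is to route everything through the Jacobian derivation and its formal properties. Recall that $\mathrm{Jac}(f_1,f_2,\cdot)$ is $k$-linear and a derivation in each of its first two slots, is alternating, and therefore $\mathrm{Jac}(f,-,r)$ annihilates $f$ and $r$ and hence kills all of $k[f,r]$. I would also use the classical fact that every LND of a polynomial ring has zero divergence: for $D\in\mathrm{LND}(B)$ the Jacobian determinant of $\exp(tD)$ is a unit in $B[t]=k[x,y,z,t]$, hence lies in $k^{*}$ and is constant in $t$, so $\operatorname{div}D=0$. Since the coefficient vectors of $D$ and of $\mathrm{Jac}(f,g,\cdot)$ are both annihilated by $\nabla f$ and $\nabla g$, they are proportional over $\mathrm{Frac}(B)$; as $D$ is irreducible (its coefficients are coprime) this gives $\mathrm{Jac}(f,g,\cdot)=\mu D$ with $\mu\in B$. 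Then $0=\operatorname{div}(\mu D)=D(\mu)+\mu\operatorname{div}D=D(\mu)$, so $\mu\in\mathrm{Ker}\,D=k[f,g]$, and a coprimality argument for the $2\times2$ Jacobian minors promotes $\mu$ to a unit, yielding the identification $D=\mathrm{Jac}(f,g,\cdot)$. This last step, $\mu\in k^{*}$, is the delicate point, and I expect it to be the main obstacle: the divergence computation only delivers $\mu\in k[f,g]$, and showing that the Jacobian minors of a pair of generators of an LND-kernel are coprime is exactly what underpins the on-the-nose form of part (b).

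Granting $D=\mathrm{Jac}(f,g,\cdot)$, part (b) is a short computation with the product rule in the middle slot: $g\,\Delta(r)=g\,\mathrm{Jac}(f,h,r)=\mathrm{Jac}(f,gh,r)-h\,\mathrm{Jac}(f,g,r)=\mathrm{Jac}(f,\phi(r),r)-h\,D(r)$. Here $\mathrm{Jac}(f,\phi(r),r)=0$ because $\phi(r)\in k[f,r]$, and $D(r)=gP(f)$, so $g\,\Delta(r)=-hgP(f)$; cancelling $g$ in the domain $B$ gives $\Delta(r)=-hP(f)$. The same manipulation yields $g\,\Delta(g)=\mathrm{Jac}(f,\phi(r),g)=\phi'(r)\,\mathrm{Jac}(f,r,g)=-\phi'(r)\,D(r)$, hence $\Delta(g)=-\phi'(r)P(f)$, a formula I will reuse below.

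For part (a) I would invoke that, for any derivation of a $\mathbb{Q}$-domain, the set $\mathrm{Nil}(\Delta)$ of $\Delta$-nilpotent elements is a factorially closed subalgebra. Now $f\in\mathrm{Nil}(\Delta)$ and $\Delta(h)=0$; by (b) the elements $\Delta(r)=-hP(f)$ and $\Delta(g)=-\phi'(r)P(f)$ already lie in $k[f,g,r]$, so one more application of $\Delta$ lands in $\mathrm{Nil}(\Delta)$, giving $g,r\in\mathrm{Nil}(\Delta)$ and therefore $k[f,g,r]\subseteq\mathrm{Nil}(\Delta)$. Finally, Proposition~\ref{proplocsl} gives $B[\tfrac{1}{gP(f)}]=k[f,g][\tfrac{1}{gP(f)}][r]$, so for each $b\in B$ there is $N$ with $(gP(f))^{N}b\in k[f,g,r]\subseteq\mathrm{Nil}(\Delta)$; since $gP(f)\in\mathrm{Nil}(\Delta)$ and $\mathrm{Nil}(\Delta)$ is factorially closed, $b\in\mathrm{Nil}(\Delta)$. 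Thus $\mathrm{Nil}(\Delta)=B$, i.e.\ $\Delta$ is an LND.

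For part (c), assume $\Delta$ irreducible and apply Theorem~\ref{3varint} to $D$ and $\Delta$. Case (i) is excluded because $f\in\mathrm{Ker}\,D\cap\mathrm{Ker}\,\Delta$ and $f\notin k$, and case (iii) is excluded because $\Delta(g)=-\phi'(r)P(f)\neq0$ (as $\deg_r\phi\ge1$, otherwise $g$ would divide a nonzero element of $k[f]$, so $\phi'\neq0$). Hence case (ii) holds: there is $w$ with $\mathrm{Ker}\,D=k[w,p]$, $\mathrm{Ker}\,\Delta=k[w,q]$ and $\mathrm{Ker}\,D\cap\mathrm{Ker}\,\Delta=k[w]$, where $\{w,q\}$ is a coordinate system of $\mathrm{Ker}\,\Delta\cong k^{[2]}$ by Theorem~\ref{miya}. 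Then $f\in k[w]$ while $w\in\mathrm{Ker}\,D=k[f,g]$, so writing $f=\rho(w)$ and comparing total degrees in $k[f,g]$ forces $\rho$ and $w$ to be linear; thus $k[w]=k[f]$ and $f$ is a coordinate of $\mathrm{Ker}\,\Delta=k[f,q]$. Writing $h=H(f,q)$ I get $\nabla f\times\nabla h=H_{q}(\nabla f\times\nabla q)$, i.e.\ $\Delta=H_{q}\,\mathrm{Jac}(f,q,\cdot)$; irreducibility of $\Delta$ forces the common divisor $H_{q}$ of its coefficients to be a unit, so $H_{q}\in k^{*}$ and $h=H_{q}q+\psi(f)$ for some $\psi\in k[f]$. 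Therefore $q\in k[f,h]$ and $\mathrm{Ker}\,\Delta=k[f,q]=k[f,h]$, as claimed.
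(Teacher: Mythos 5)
This statement is quoted from Freudenburg's paper \cite[Theorem~2]{Fr97}; the present paper gives no proof of it, so your argument can only be judged on its own terms. Parts (b) and (c) of your proposal are essentially sound: the product-rule manipulation $g\,\mathrm{Jac}(f,h,r)=\mathrm{Jac}(f,\phi(r),r)-h\,\mathrm{Jac}(f,g,r)$ is correct, the normalization $D=\mathrm{Jac}(f,g,\cdot)$ that you flag as the delicate point is indeed a known fact for irreducible LNDs of $k^{[3]}$ with kernel $k[f,g]$ (it does not follow from Theorem~\ref{df} directly, since $f$ need not be a variable of $B$, but it is standard), and your treatment of (c) via Theorem~\ref{3varint} together with the factorization $\Delta=H_q\,\mathrm{Jac}(f,q,\cdot)$ and irreducibility is a clean argument.

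The genuine gap is in part (a), at exactly the step that carries the weight of the theorem. You assert that for an arbitrary derivation of a $\mathbb{Q}$-domain the subalgebra $\mathrm{Nil}(\Delta)$ is factorially closed, and you use this to pass from $(gP(f))^{N}b\in\mathrm{Nil}(\Delta)$ to $b\in\mathrm{Nil}(\Delta)$. That principle is false: for $\Delta=x\frac{\partial}{\partial x}-y\frac{\partial}{\partial y}$ on $k[x,y]$ one has $xy\in\mathrm{Ker}\,\Delta\subseteq\mathrm{Nil}(\Delta)$ but $x\notin\mathrm{Nil}(\Delta)$. Even the weaker division property you actually need --- $a\in\mathrm{Nil}(\Delta)$, $ab\in\mathrm{Nil}(\Delta)$ imply $b\in\mathrm{Nil}(\Delta)$ --- fails for derivations not already known to be locally nilpotent: take $\Delta=uv\frac{\partial}{\partial u}-v^{2}\frac{\partial}{\partial v}$ on $k[u,v]$; then $\Delta^{2}(u)=\Delta(uv)=0$, so $u$ and $uv$ both lie in $\mathrm{Nil}(\Delta)$, yet $\Delta^{n}(v)=(-1)^{n}n!\,v^{n+1}\neq0$, so $v\notin\mathrm{Nil}(\Delta)$. (Division by a nonzero element of $\mathrm{Ker}\,\Delta$ is harmless, which is why you may cancel $P(f)^{N}$; but $g$ is not in $\mathrm{Ker}\,\Delta$, only in $\mathrm{Nil}(\Delta)$, so cancelling $g^{N}$ is precisely the unproven point.) Your reduction does correctly establish $k[f,g,h,r]\subseteq\mathrm{Nil}(\Delta)$ and $B\subseteq k[f,g,r][\frac{1}{gP(f)}]$, but the final descent to $\mathrm{Nil}(\Delta)=B$ is not justified, and this is where Freudenburg's actual proof of Theorem~2(a) does its real work. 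Until that step is supplied by a different argument, part (a) --- and with it the hypothesis of parts (b) and (c) that $\Delta$ is an LND --- remains unproved.
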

We say that $\Delta$ is obtained from $D$ by a local slice construction from the data $(f,g,r)$.

\smallskip
\noindent
We end this section by stating a theorem of Daigle and Freudenburg which characterizes all $R$-LNDs of $R^{[2]}$, 
where~$R$ is a UFD containing $\bQ$ \cite[Theorem 2.4]{DF}.
\begin{thm}\label{df}
Let $R$ be a UFD containing $\bQ$ with field of fractions $K$ and let $B=R[x,y]=R^{[2]}$. 
For an $R$-derivation $D\neq0$ of $B$, the following are equivalent:
\begin{enumerate}
 \item[\rm (i)] $D$ is locally nilpotent;
 \item[\rm (ii)] $D=\alpha {\rm Jac}(F,\cdot)$, for some $F \in B$ which is a variable of $K[x,y]$
  satisfying\\
  $gcd_B(\frac{\partial F}{\partial x},\frac{\partial F}{\partial y})=1$, and for some 
 $\alpha \in R[F] \setminus \{0\}$.
 \end{enumerate}
Moreover, if the above conditions are satisfied, then $Ker$ $D=R[F]=R^{[1]}$.
\end{thm}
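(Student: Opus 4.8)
The plan is to reduce the whole statement to Rentschler's theorem (Theorem~\ref{R}) over the fraction field $K$, and then to \emph{descend} the resulting structure from $K[x,y]$ back to $B=R[x,y]$ using the UFD hypothesis on $R$. Since $D$ is an $R$-derivation, it extends uniquely to a $K$-derivation $\widetilde D$ of $K[x,y]=B\otimes_R K$: for $g\in K[x,y]$ write $g=f/d$ with $f\in B$, $d\in R\setminus\{0\}$, so that $\widetilde D^{\,m}(g)=D^m(f)/d$ and hence $\widetilde D$ is locally nilpotent if and only if $D$ is. Thus both implications will be assembled from the behaviour of $\widetilde D$ over $K$ together with a content/Gauss-lemma descent, and I expect the descent to be the genuine obstacle.

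For the implication (ii) $\Rightarrow$ (i), I would first observe that if $F$ is a variable of $K[x,y]$, completing it to a coordinate system $K[x,y]=K[F,G]$ makes $c:={\rm Jac}(F,G)$ a nonzero element of $K$, and the chain rule gives ${\rm Jac}(F,h)=c\,\partial h/\partial G$ for every $h$. Hence ${\rm Jac}(F,\cdot)=c\,\partial/\partial G$ is locally nilpotent on $K[x,y]$ with kernel $K[F]$. As $\alpha\in R[F]\subseteq K[F]={\rm Ker}$, the derivation $\alpha\,{\rm Jac}(F,\cdot)$ is a replica of a locally nilpotent derivation, hence locally nilpotent on $K[x,y]$. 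Because $F\in B$ forces ${\rm Jac}(F,\cdot)$ to map $B$ into $B$ and $\alpha\in R[F]\subseteq B$, the derivation $D$ preserves $B$, and the restriction of a locally nilpotent derivation to an invariant subring is again locally nilpotent; this gives (i).

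For (i) $\Rightarrow$ (ii), applying Theorem~\ref{R} to $\widetilde D$ over $K$ yields a variable $F_0$ of $K[x,y]$ and $p\in K[F_0]$ with $\widetilde D=p(F_0)\,\partial/\partial G$ in some coordinate system $K[x,y]=K[F_0,G]$; equivalently $\widetilde D=\beta\,{\rm Jac}(F_0,\cdot)$ with $\beta\in K[F_0]$ and ${\rm Ker}\,\widetilde D=K[F_0]$. The plan is then to show that the kernel descends, namely ${\rm Ker}\,D=B\cap K[F_0]=R[F]$ for a suitable $F$, to take this $F$ as the $R$-algebra generator of the kernel, and to verify for it the two remaining assertions $\gcd_B(\partial F/\partial x,\partial F/\partial y)=1$ and $\alpha:=\beta\in R[F]$. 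Note that once ${\rm Ker}\,D=R[F]$ is known, $F$ is automatically a variable of $K[x,y]$ (since $K[F]=K[F_0]$), and the gcd condition is the statement that ${\rm Jac}(F,\cdot)$ is primitive as a $B$-derivation; any common factor of the two partials lies in $B\cap K^\times=R\setminus\{0\}$, and choosing the correct $R$-algebra generator of the kernel is exactly what removes it (it replaces, e.g., $a+\pi x$ by $x$). For $\alpha\in R[F]$ I would evaluate $D$ on a minimal local slice $r$: then ${\rm Jac}(F,r)\in B\cap K[F]=R[F]$ and $D(r)\in R[F]$, so $\beta={D(r)}/{{\rm Jac}(F,r)}$ is forced into $R[F]$ once the descent for the kernel is in place.

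The main obstacle is the descent ${\rm Ker}\,D=B\cap K[F_0]=R[F]$. Given $h\in B\cap K[F_0]$, write $h=\sum_i c_iF_0^{\,i}$ with $c_i\in K$ and show $c_i\in R$: if some $c_i$ carried a prime $\pi$ of $R$ in its denominator, I would clear the minimal power $\pi^m$ (so that in $R_{(\pi)}$ some coefficient becomes a unit), reduce modulo $\pi$ in $(R/\pi)[x,y]$, and use primitivity of $F_0$ (so $\overline{F_0}\neq0$) together with $R/\pi$ being a domain to contradict $\pi^m\mid\pi^m h$ in $B$. The delicate bookkeeping is to ensure that the surviving coefficients do not cancel after reduction and that $\overline{F_0}$ retains enough structure to make $\sum_i \overline{\pi^m c_i}\,\overline{F_0}^{\,i}$ nonzero; this is precisely where I expect the hypotheses that $R$ is a UFD and that ${\rm char}\,k=0$ (used, e.g., to pass from $\pi\mid i a_{ij}$ to $\pi\mid a_{ij}$) to be jointly essential. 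The ``Moreover'' clause that ${\rm Ker}\,D=R[F]\cong R^{[1]}$ then follows since $F$ is transcendental over $K\supseteq R$.
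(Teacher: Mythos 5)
This statement is quoted from Daigle and Freudenburg \cite[Theorem 2.4]{DF} and is used in the paper as a black box, so there is no in-paper proof to compare against. Your strategy --- extend $D$ to $\widetilde D$ on $K[x,y]$, apply Rentschler's theorem there, and descend to $B$ using the UFD hypothesis --- is a legitimate route to the result, and your argument for (ii)$\Rightarrow$(i) is complete and correct (note it does not even need the gcd hypothesis).

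In (i)$\Rightarrow$(ii) there are two genuine gaps. First, in the descent $B\cap K[F_0]=R[F]$ you invoke only \emph{primitivity} of $F_0$, i.e.\ $\overline{F_0}\neq 0$ in $(R/\pi)[x,y]$. That is not enough: for $F_0=1+\pi x$ one has $\overline{F_0}=1\neq 0$, yet $(F_0-1)/\pi=x$ lies in $B\cap K[F_0]\setminus R[F_0]$. The reduction argument needs $\overline{F_0}$ to be \emph{non-constant} modulo every prime $\pi$ of $R$ (then $1,\overline{F_0},\overline{F_0}^2,\dots$ are linearly independent over the domain $R/\pi$ and the contradiction goes through); equivalently, $F_0-F_0(0,0)$ must be primitive. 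You allude to the needed normalization (``replace $a+\pi x$ by $x$''), but it has to be carried out before, not after, the mod-$\pi$ argument: set $F:=(F_0-F_0(0,0))/c$ where $c$ is the content of $F_0-F_0(0,0)$; this same normalization, combined with $\mathbb{Q}\subseteq R$ (so that $\overline{F}$ constant would force both partials to vanish mod $\pi$), is what yields $\gcd_B(F_x,F_y)=1$. Second, the claim that $\beta=D(r)/{\rm Jac}(F,r)$ is ``forced into $R[F]$'' because numerator and denominator both lie in $R[F]$ is a non sequitur: a ratio of two elements of $R[F]$ lying in $K[F]$ need not lie in $R[F]$ (consider $\pi/\pi^2$). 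The standard repair is to write $\beta=p/q$ in lowest terms in the UFD $R[F]$ (which, being the kernel of an LND, is factorially closed in $B$, so coprimality persists there), deduce ${\rm Jac}(F,\cdot)(B)\subseteq qB$ from $q\,D=p\,{\rm Jac}(F,\cdot)$, and conclude $q\in R^*$ because ${\rm Jac}(F,\cdot)$ takes the values $-F_y$ and $F_x$, whose gcd in $B$ is $1$.
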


\section{Commuting LND Construction}\label{cln}
Let $B$ be an affine $k$-domain and $D_1,D_2 \in {\rm LND}(B)$. One may ask whether the derivation $D_1+D_2$ is also an LND. In general, it is not true but the answer is affirmative when $D_1$ and $D_2$ commute. However, this property does not hold for replicas of LNDs, i.e.,  if we have two commuting LNDs, the sum of their replicas may not be locally nilpotent. 

\begin{ex}
{\em 
Consider two commuting LNDs $\frac{\partial}{\partial x}$ and $\frac{\partial}{\partial y}$ of $k[x,y]$. The sum of their replicas $y\frac{\partial}{\partial x}+x\frac{\partial}{\partial y}$ is not locally nilpotent.
}
\end{ex}

\smallskip
\noindent
The following lemma shows that if we take replicas of a special form, then their sum is an~LND.
\begin{lem}\label{lndcommute}
Let $B$ be an affine $k$-domain and $\delta_1,\ldots, \delta_m \in {\rm LND}(B)$ be non-zero and pairwise commuting, i.e., $\delta_i\delta_j=\delta_j\delta_i$ for all $i,j \in \{1, \dots ,m\}$. Suppose 
$$f_k\in \bigcap_{i=k}^m {\rm Ker}~ \delta_i, ~~~ 1\leqslant k\leqslant m.$$ 
Then $\Delta=\sum\limits_{i=1}^n f_i\delta_i$ is an LND of $B$.
\end{lem}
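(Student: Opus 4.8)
The plan is to induct on the number $m$ of derivations. First note that $\Delta=\sum_{i=1}^{m}f_i\delta_i$ is automatically a $k$-derivation, being a $B$-linear combination of derivations; the only thing in question is local nilpotency. For the base case $m=1$ the claim is that $f_1\delta_1$ with $f_1\in{\rm Ker}\,\delta_1$ is locally nilpotent, which is exactly the statement that a replica of an LND is an LND (one checks $(f_1\delta_1)^n=f_1^n\delta_1^n$ using $\delta_1(f_1)=0$).

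The tempting approach — writing $\Delta=E+f_m\delta_m$ with $E=\sum_{i=1}^{m-1}f_i\delta_i$ and invoking ``a sum of commuting LNDs is an LND'' — does not work directly, and I expect this to be the main obstacle: a short computation gives $[E,f_m\delta_m]=E(f_m)\,\delta_m\neq 0$ in general, so the two summands do not commute, and neither do the individual replicas $f_i\delta_i$. The observation that rescues the argument is that, although the pieces fail to commute with one another, the full derivation $\Delta$ commutes with $\delta_m$. Indeed, since $f_i\in\bigcap_{l=i}^{m}{\rm Ker}\,\delta_l$ and $i\leqslant m$, we have $\delta_m(f_i)=0$ for every $i$; combined with $\delta_m\delta_i=\delta_i\delta_m$ this yields $\delta_m(\Delta(b))=\sum_i f_i\delta_i(\delta_m(b))=\Delta(\delta_m(b))$ for all $b$, that is, $[\Delta,\delta_m]=0$. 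The same computation shows $[E,\delta_m]=0$, so both $E$ and $\Delta$ map $K:={\rm Ker}\,\delta_m$ into itself.

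Next I would feed $E$ to the induction hypothesis. The derivations $\delta_1,\dots,\delta_{m-1}$ are non-zero and pairwise commuting, and the coefficients satisfy $f_k\in\bigcap_{i=k}^{m}{\rm Ker}\,\delta_i\subseteq\bigcap_{i=k}^{m-1}{\rm Ker}\,\delta_i$ for $1\leqslant k\leqslant m-1$, so $E$ is an LND of $B$ by induction. (Affineness is never actually used, which is reassuring.) Since each $f_i$ with $i\leqslant m-1$ lies in $K$ and $\delta_m$ kills $K$, on $K$ we have $\Delta|_K=E|_K$; because $E$ is locally nilpotent and preserves $K$, it follows that $\Delta$ is locally nilpotent on $K$, with $\Delta^n(b)=E^n(b)$ for $b\in K$.

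Finally I would pass from $K$ to all of $B$ by an inner induction on the $\delta_m$-degree $q:=\deg_{\delta_m}(b)$, using the degree function recalled in the Preliminaries. If $q=0$ then $b\in K$ and we are done by the previous step. If $q\geqslant 1$, then $\deg_{\delta_m}(\delta_m(b))=q-1$, so by the inner induction $\Delta^M(\delta_m(b))=0$ for some $M$; since $[\Delta,\delta_m]=0$ this gives $\delta_m(\Delta^M(b))=\Delta^M(\delta_m(b))=0$, i.e. $\Delta^M(b)\in K$, and then local nilpotency of $\Delta$ on $K$ yields $\Delta^{M+M'}(b)=0$ for some $M'$. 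Hence $\Delta$ is locally nilpotent on $B$, completing the outer induction on $m$. The only routine checks left to record are that each $\delta_i$ maps $K$ into $K$ and that $f_k\in K$ for $k\leqslant m-1$, both of which follow immediately from pairwise commutativity and the nesting hypothesis.
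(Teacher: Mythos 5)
Your proof is correct, but it follows a genuinely different route from the paper's. The paper argues directly: it fixes $g\in B$, defines integers $l_j=\deg_{\delta_j}\bigl(g\prod_{i=j+1}^m f_i^{l_i}\bigr)+1$ recursively, sets $e=l_1+\cdots+l_m-m+1$, expands $\Delta^e(g)$ into words $f_{a_1}\delta_{a_1}\circ\cdots\circ f_{a_e}\delta_{a_e}(g)$, and kills each word by a pigeonhole argument (some index $j$ occurs at least $l_j$ times) combined with the fact that $\delta_j$ commutes, as an operator, with every $f_i$ and $\delta_i$ except $f_{j+1},\dots,f_m$, which lets one push the $\delta_j$'s rightward and conclude via a $\deg_{\delta_j}$ estimate. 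You instead run an outer induction on $m$ and an inner induction on $\deg_{\delta_m}(b)$, hinging on the identity $[\Delta,\delta_m]=0$ — which, notably, is exactly the content of the lemma the paper proves immediately afterwards, so you are reusing a fact the authors need anyway — together with the observation that $\Delta$ and $E=\sum_{i=1}^{m-1}f_i\delta_i$ both preserve $K=\mathrm{Ker}\,\delta_m$ and agree there. All the steps check out: $\delta_m(f_i)=0$ for every $i$ by the nesting hypothesis, so $[\Delta,\delta_m]=[E,\delta_m]=0$; the coefficients $f_1,\dots,f_{m-1}$ satisfy the hypothesis for $\delta_1,\dots,\delta_{m-1}$, so the outer induction applies to $E$; and the inner induction on $\deg_{\delta_m}$ correctly reduces to the case $b\in K$. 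What the paper's approach buys is an explicit bound $e$ on the order of nilpotency of $\Delta$ at $g$; what yours buys is a shorter, cleaner argument with no word-level bookkeeping, at the price of losing any effective bound. Your remark that affineness of $B$ is never used is accurate for both proofs, and your reading of the paper's typo $\sum_{i=1}^{n}$ as $\sum_{i=1}^{m}$ is the intended one.
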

\begin{proof}
It is easy to see that $\Delta$ is a derivation of $B$. So it is enough to check that it is locally nilpotent. Let us fix $g\in B$. We define inductively positive integers $l_j$ from $j=1$ to $j=m$ by $l_j={\rm deg}_{\delta_j}\left(g\prod_{i=j+1}^mf_i^{l_i} \right)+1$. Set $e=l_1+\ldots+l_m-m+1$. Then, for all $a_1,a_2,\dots  ,a_e \in \{1, \dots ,m\}$,
$$
f_{a_1}\delta_{a_1}\circ\ldots\circ f_{a_e}\delta_{a_e}(g)=0.
$$
Indeed, if $m$ occurs among $a_1,\ldots, a_e$ at least $l_m$ times, then since $\delta_m$ commutes with $f_i$ and~$\delta_i$ we have for some~$\psi$:
$$
f_{a_1}\delta_{a_1}\circ\ldots\circ f_{a_e}\delta_{a_e}(g)=\psi\circ \delta_m^{l_m}(g)=0.
$$
Note, that $\delta_{m-1}$ commutes with all $f_i$ and $\delta_i$, except $f_m$. If $m$ occurs among $a_1,\ldots, a_e$ less than $l_m$ times and $m-1$ occurs among $a_1,\ldots, a_e$ at least $l_{m-1}$ times, then we have for some~$\psi$:
$$
f_{a_1}\delta_{a_1}\circ\ldots\circ f_{a_e}\delta_{a_e}(g)=\psi\circ\delta_{m-1}^{b_1}f_m^{c_1}\circ\ldots\circ\delta_{m-1}^{b_t}f_m^{c_t}\circ\delta_{m-1}^{b_{t+1}}(g),
$$
where $\sum \limits_{i=1}^{t+1} b_i\geqslant l_{m-1}$, $\sum \limits_{i=1}^t c_i< l_m$. We have 
$$\deg_{\delta_{m-1}}\left(\delta_{m-1}^{b_1}f_m^{c_1}\circ\ldots\circ\delta_{m-1}^{b_t}f_m^{c_t}\circ\delta_{m-1}^{b_{t+1}}(g)\right)\leqslant \deg_{\delta_{m-1}}\left(g f_m^{l_m} \right)-l_{m-1}<0.$$
This means that $\delta_{m-1}^{b_1}f_m^{c_1}\circ\ldots\circ\delta_{m-1}^{b_t}f_m^{c_t}\circ\delta_{m-1}^{b_{t+1}}(g)=0$.

Similarly we can consider cases when $i$ occurs among $a_1,\ldots, a_e$ less than $l_i$ times for all $i>j$ and $j$ occurs among $a_1,\ldots, a_s$ at least $l_i$ times. In all these cases we have 
$$
f_{a_1}\delta_{a_1}\circ\ldots\circ f_{a_e}\delta_{a_e}(g)=0.
$$
But since $e>\sum \limits_{i=1}^m(l_i-1)$, we are in one of these cases.

If we expand $\Delta^e(g)$, we obtain a sum of expressions $f_{a_1}\delta_{a_1}\circ\ldots\circ f_{a_e}\delta_{a_e}(g)$. Therefore, $\Delta^e(g)=0$.
\end{proof}

\begin{defn}
{\em 
We say that $\Delta$ in Lemma \ref{lndcommute}, is obtained by a {\it Commuting LND Construction} (denoted by $C$-{\it construction}) from $\delta_1,\ldots, \delta_m$.
}
\end{defn}
\begin{lem}
The LNDs $\Delta$ and $\delta_m$ commute.
\end{lem}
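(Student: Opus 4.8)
The plan is to verify the commutation relation directly at the level of operators, showing that $\Delta \circ \delta_m = \delta_m \circ \Delta$ as $k$-linear endomorphisms of $B$. Writing $[D,E] := D\circ E - E\circ D$ for the commutator, which is $k$-bilinear and again a derivation, I would first use bilinearity in the first argument to expand
$$[\Delta,\delta_m] = \left[\sum_{i=1}^m f_i\delta_i,\ \delta_m\right] = \sum_{i=1}^m [f_i\delta_i,\delta_m],$$
thereby reducing the problem to analysing a single replica $f_i\delta_i$ against $\delta_m$. It is worth noting that this argument does not use the local nilpotence of $\Delta$ established in Lemma \ref{lndcommute} at all: it is a purely algebraic identity for derivations, relying only on the Leibniz rule and the standing hypotheses on the $\delta_i$ and the $f_i$.

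Next I would establish the elementary commutator identity
$$[f_i\delta_i,\delta_m] = f_i[\delta_i,\delta_m] - \delta_m(f_i)\,\delta_i,$$
which follows by evaluating both sides on an arbitrary $g\in B$ and applying the Leibniz rule to the term $\delta_m\rbo f_i\delta_i(g)\rbc = \delta_m(f_i)\,\delta_i(g) + f_i\,\delta_m\rbo\delta_i(g)\rbc$. With this identity in hand, the two summands on the right are handled separately. The first vanishes because $\delta_i$ and $\delta_m$ are assumed to commute, so $[\delta_i,\delta_m]=0$. The second vanishes as soon as we know that $\delta_m(f_i)=0$.

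The only point requiring care, and it is a mild one, is confirming that $f_i\in{\rm Ker}~\delta_m$ for every $i\in\{1,\dots,m\}$. This is precisely where the nested kernel hypothesis $f_i\in\bigcap_{j=i}^m{\rm Ker}~\delta_j$ enters: since $i\leqslant m$, the index $m$ always lies in the range $\{i,i+1,\dots,m\}$, so $f_i$ belongs to ${\rm Ker}~\delta_m$ and hence $\delta_m(f_i)=0$. Consequently each bracket $[f_i\delta_i,\delta_m]$ is zero, and summing over $i$ yields $[\Delta,\delta_m]=0$, that is, $\Delta$ and $\delta_m$ commute. I expect no genuine obstacle here; the whole computation is short and bilinear, and the essential observation is simply that commuting with the \emph{last} derivation $\delta_m$ only requires the coefficients $f_i$ to be $\delta_m$-constants, which the triangular-type nesting condition guarantees automatically.
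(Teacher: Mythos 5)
Your proof is correct and uses exactly the same two ingredients as the paper's one-line computation: the pairwise commutativity $\delta_i\circ\delta_m=\delta_m\circ\delta_i$ and the fact that $\delta_m(f_i)=0$ because $m$ lies in the index range of the nested kernel condition. Phrasing it through the commutator identity $[f_i\delta_i,\delta_m]=f_i[\delta_i,\delta_m]-\delta_m(f_i)\delta_i$ is just a repackaging of the paper's chain of equalities, so this is essentially the same argument.
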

\begin{proof}
We have
$$
\Delta\circ\delta_m=\left(\sum\limits_{i=1}^n f_i\delta_i\right)\circ\delta_m=\sum\limits_{i=1}^n f_i\delta_i\circ\delta_m=\sum\limits_{i=1}^n f_i\delta_m\circ\delta_i=\sum\limits_{i=1}^n \delta_m\circ f_i\delta_i=\delta_m\circ\left(\sum\limits_{i=1}^n f_i\delta_i\right)=\delta_m\circ\Delta.
$$
\end{proof}

Suppose $m=2$. Starting with two commuting LNDs $\delta_1$ and $\delta_2$, by C-construction,  we obtain two commuting LNDs $\delta_2$ and $\delta_3=\Delta$. Then we can start with these two commuting LNDs and again obtain by C-construction two commuting LNDs $\delta_3$ and $\delta_4$. Iterating this procedure, we obtain a chain $\delta_1,\delta_2,\delta_3,\ldots$ , where for each $i \geqslant 2$, the derivation $\delta_i$ commutes with both $\delta_{i-1}$ and $\delta_{i+1}$. We call this chain a {\it C-chain}. A C-chain is called {\it irreducible} if all the LNDs $\delta_i$ are irreducible.

\subsection{Two variables}\label{sectwovar}

Let $B=k[x,y]$ and $\delta_1, \delta_2$ be two commuting non-equivalent LNDs. Then C-construction gives $\delta_3=\lambda \delta_1+\sigma\delta_2$, where $\sigma\in {\rm Ker}~\delta_2$, $\lambda\in k$ and $\delta_3$ commutes with $\delta_2$. It turns out that in case when $\delta_2$ is irreducible, each LND commuting with $\delta_2$ can be obtained by C-construction from $\delta_1$ and $\delta_2$.

\begin{lem}\label{dimtwo}
Suppose $D$ and $E$ are commuting non-equivalent LNDs of $B$. Assume $D$ is irreducible. Let $E'$ be an LND of $B$ commuting with $D$. Then $E'=\lambda E+\sigma D$, where $\sigma\in {\rm Ker}~ D$ and $\lambda \in k^*$.
\end{lem}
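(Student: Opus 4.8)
The plan is to put $D$ into Rentschler normal form and then read off every commuting LND by evaluating on the two coordinates. First I would apply Theorem~\ref{R} to the irreducible LND $D$: there is an automorphism $\alpha$ with $\alpha D\alpha^{-1}=p(x)\,\partial/\partial y$, and irreducibility forces the image $p(x)B$ to equal $B$, so $p(x)\in k^*$. Conjugation by $\alpha$ preserves local nilpotency and the commuting relations and carries $\mathrm{Ker}\,D$ to a polynomial subring, so after replacing $D,E,E'$ by their conjugates (and absorbing the unit $p(x)$, which only rescales $D$ and will be absorbed into $\sigma$ at the end) I may assume $D=\partial/\partial y$, with $A:=\mathrm{Ker}\,D=k[x]$ and $y$ a slice, whence $B=A[y]$ by the Slice Theorem.

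Next I would determine an arbitrary LND $E'$ commuting with $D$ from its values on $x$ and $y$. Since $E'$ commutes with $D$ it preserves $A$: if $D(f)=0$ then $D(E'(f))=E'(D(f))=0$, so $E'(f)\in A$. Thus $E'|_A$ is an LND of $A\cong k^{[1]}$, and every LND of $k^{[1]}$ equals $c\,\partial/\partial x$ for some $c\in k$; hence $E'(x)=\lambda_0\in k$ is constant. For the second coordinate, commutation gives $D(E'(y))=E'(D(y))=E'(1)=0$, so $E'(y)=:b(x)\in A$. Therefore $E'=\lambda_0\,\partial/\partial x+b(x)\,\partial/\partial y$, and the same reasoning applied to $E$ yields $E=\mu_0\,\partial/\partial x+c(x)\,\partial/\partial y$ with $\mu_0\in k$ and $c\in k[x]$.

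Finally I would carry out the linear bookkeeping. Because $E$ is not equivalent to $D$, it is not a replica $c(x)\,\partial/\partial y$ of $D$ (such a derivation has kernel $k[x]=A$), so $\mu_0\neq 0$. Setting $\lambda:=\lambda_0/\mu_0\in k$, the derivation $E'-\lambda E$ annihilates $x$ and sends $y$ to $b(x)-\lambda c(x)\in A$; hence $E'-\lambda E=\sigma\,\partial/\partial y=\sigma D$ with $\sigma:=b-\lambda c\in A=\mathrm{Ker}\,D$. Conjugating back by $\alpha^{-1}$ (which sends $A$ to $\mathrm{Ker}\,D$) gives $E'=\lambda E+\sigma D$ with $\sigma\in\mathrm{Ker}\,D$, as claimed.

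The step carrying the real content is forcing $E'(x)$ to be a \emph{constant} and not merely an element of $A$: a derivation commuting with $D$ only needs $E'(x)\in A$, and it is local nilpotency of the restriction $E'|_A$ on $A\cong k^{[1]}$ that collapses this to $k$. I would also flag the role of the hypothesis $\lambda\in k^*$: the construction gives $\lambda=\lambda_0/\mu_0$, which is nonzero exactly when $E'(x)\neq 0$, i.e.\ when $E'$ is non-equivalent to $D$; if $E'$ is itself a replica of $D$ then $\lambda=0$ and $E'=\sigma D$. So the stated conclusion $\lambda\in k^*$ presupposes that $E'$, like $E$, is non-equivalent to $D$.
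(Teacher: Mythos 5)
Your proof is correct and follows essentially the same route as the paper's: normalize $D$ via Rentschler's theorem, restrict the commuting derivations to $\mathrm{Ker}\,D\cong k^{[1]}$ to see they act there by constants, use non-equivalence of $E$ and $D$ to make that constant invertible, and subtract off a multiple of $E$ to leave a replica of $D$. Your closing caveat is well taken: the conclusion $\lambda\in k^*$ indeed fails when $E'$ is itself equivalent to $D$ (e.g.\ $E'=D$), and the paper's own proof has the same unstated assumption, since its $\lambda=\mu'/\mu$ can vanish.
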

\begin{proof}
By Theorem \ref{R}, there exist $f,g\in k[x,y]$ such that $k[x,y]=k[f,g]$ and $D=\frac{\partial}{\partial f}$. Let us consider restrictions $\varepsilon$ and $\varepsilon'$ of $E$ and $E'$ respectively to ${\rm Ker}~ D=k[g]$. Since $DE=ED$ and $DE'=E'D$, both $\varepsilon$, $\varepsilon' \in {\rm LND}(k[g])$.  So we have $\varepsilon=\mu\frac{\partial}{\partial g}$, $\varepsilon'=\mu'\frac{\partial}{\partial g}$, where $\mu, \mu' \in k$.  If $\mu=0$, then $E|_{{\rm Ker}~ D}=0$ and consequently $E$ and $D$ are equivalent, which is a contradiction. So $\mu \neq 0$. Let $\lambda=\frac{\mu'}{\mu}$ and $F=E'-\lambda E$. Then $FD=DF$ and $F(g)=0$. Since $F(g)=0$, we have $F=\sigma\frac{\partial}{\partial f}$ for some $\sigma\in B$. We have 
$\frac{\partial\sigma}{\partial f}=D(\sigma)=DF(f)=FD(f)=F(1)=0$. So, $E'=\sigma D+\lambda E$, where $\sigma=\sigma(g)\in {\rm Ker}~ D$ and $\lambda \in k^*$.
\end{proof}

\begin{cor}\label{corrin}
Let $R$ be an affine $k$-domain with field of fractions $K$. Suppose $D$ and $E$ are commuting non-equivalent $R$-LNDs of $R[x,y]$. Assume $D$ is irreducible. Let $E'$ be an $R$-LND of $R[x,y]$ commuting with $D$. Then there exist non-zero elements $s,r \in R$ and $q \in {\rm Ker}~D$ such that $sE'=r E+q D.$
\end{cor}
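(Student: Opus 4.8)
\emph{Plan.} The idea is to reduce to the already-proved Lemma~\ref{dimtwo} over the field $K$ by localizing, and then to clear denominators to descend the resulting relation back to $R[x,y]$. Since $D,E,E'$ are $R$-derivations, they kill $R$, so the multiplicative set $S=R\setminus\{0\}$ lies in each of the kernels ${\rm Ker}\,D$, ${\rm Ker}\,E$, ${\rm Ker}\,E'$. By the localization principle \cite[Principle~9]{F} each of $D,E,E'$ induces an LND of $S^{-1}R[x,y]=K[x,y]$; call these $\widetilde D,\widetilde E,\widetilde E'$, with kernels $S^{-1}{\rm Ker}\,D$, etc. These induced derivations still commute (commutation on the generators $x,y$ is inherited), and $\widetilde D,\widetilde E$ are non-equivalent: contracting back, ${\rm Ker}\,D=R[x,y]\cap{\rm Ker}\,\widetilde D$ and likewise for $E$, so ${\rm Ker}\,\widetilde D={\rm Ker}\,\widetilde E$ would force ${\rm Ker}\,D={\rm Ker}\,E$.

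The step requiring real care---and the main obstacle---is that Lemma~\ref{dimtwo} needs irreducibility of $\widetilde D$ over $K[x,y]$, whereas we are only given that $D$ is irreducible over $R[x,y]$. When $R$ is not a UFD this does not pass to $K$ for formal reasons, so I would instead deduce irreducibility of $\widetilde D$ from the \emph{existence} of the non-equivalent commuting $\widetilde E$. Suppose $\widetilde D$ were reducible. By Rentschler's theorem (Theorem~\ref{R}) pick coordinates $K[x,y]=K[f,g]$ with $\widetilde D=p(f)\frac{\partial}{\partial g}$ and $\deg p\geqslant 1$, so ${\rm Ker}\,\widetilde D=K[f]$. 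Since $\widetilde E$ commutes with $\widetilde D$ and $f\in{\rm Ker}\,\widetilde D$, we get $\widetilde E(f)\in K[f]$, and as $\widetilde E$ restricts to an LND of $K[f]=K^{[1]}$ in fact $\widetilde E(f)\in K$. Writing $\widetilde E=P\frac{\partial}{\partial f}+Q\frac{\partial}{\partial g}$ with $P=\widetilde E(f)\in K$, a direct computation gives $[\widetilde E,\widetilde D]=\big(Pp'(f)-p(f)\frac{\partial Q}{\partial g}\big)\frac{\partial}{\partial g}$. Its vanishing forces $p\mid Pp'$ in $K[f]$, which (characteristic $0$, $\deg p\geqslant 1$) is impossible unless $P=0$. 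Then $f\in{\rm Ker}\,\widetilde E$, so $K[f]\subseteq{\rm Ker}\,\widetilde E$; since ${\rm Ker}\,\widetilde D=K[f]$ is factorially, hence algebraically, closed in $K[x,y]$ and ${\rm Ker}\,\widetilde E$ is algebraic over it, the two kernels coincide, contradicting non-equivalence. Hence $\widetilde D$ is irreducible.

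With irreducibility in hand, Lemma~\ref{dimtwo} applies over $K$ to $\widetilde D,\widetilde E,\widetilde E'$ and yields $\widetilde E'=\lambda\widetilde E+\sigma\widetilde D$ with $\lambda\in K^{*}$ and $\sigma\in{\rm Ker}\,\widetilde D=S^{-1}{\rm Ker}\,D$. Finally I descend: write $\lambda=a/b$ with $a,b\in R\setminus\{0\}$ and $\sigma=c/d$ with $c\in{\rm Ker}\,D$, $d\in R\setminus\{0\}$. Multiplying the operator identity by $bd$ gives $bd\,\widetilde E'=ad\,\widetilde E+bc\,\widetilde D$, and restricting all four operators to $R[x,y]$, where $\widetilde D,\widetilde E,\widetilde E'$ agree with $D,E,E'$, produces $bd\,E'=ad\,E+bc\,D$. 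Setting $s=bd$, $r=ad$ (both nonzero in $R$, since $\lambda\neq 0$ guarantees $a\neq0$) and $q=bc\in{\rm Ker}\,D$ (note $b\in R\subseteq{\rm Ker}\,D$) gives the desired $sE'=rE+qD$.
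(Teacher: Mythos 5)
Your proof is correct and follows the same route as the paper's: localize to $K[x,y]$, apply Lemma~\ref{dimtwo} to the induced derivations, and clear denominators. The one substantive addition is your verification that the induced derivation $\widetilde D$ is irreducible over $K[x,y]$ (via the commutator computation forcing $\widetilde E(f)=0$ when $\widetilde D=p(f)\frac{\partial}{\partial g}$ with $\deg p\geqslant 1$, and then comparing kernels); the paper invokes Lemma~\ref{dimtwo} without checking this hypothesis, which is only formally inherited from irreducibility over $R[x,y]$ when $R$ is a UFD, so your extra step is a legitimate and correctly executed repair of a point the paper glosses over.
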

\begin{proof}
The LNDs  $D$, $E$, and $E'$ induce LNDs of $K[x,y]$. By Lemma~\ref{dimtwo}, $E'=\lambda E+\sigma D$, where $\lambda\in K^*$, $\sigma\in K[x,y]$ and $\sigma \in {\rm Ker}~D$. There exist non-zero elements $s,r \in R$ such that $r=s\lambda\in R$ and $q=s\sigma\in R[x,y]$. Then $sE'=r E+q D$.
\end{proof}

\smallskip
\noindent 
Let $\delta \in {\rm LND}(k[x,y])$ be irreducible. By Theorem \ref{R}, there exist $f,g\in k[x,y]$ such that $k[x,y]=k[f,g]$ and $\delta=\frac{\partial}{\partial g}$. Let $\alpha$ be the automorphism of $k[x,y]$ given by $(x,y)\mapsto (f,g)$. Let us denote by $\Psi(\delta)$ the right coset $T_2(k)\alpha$. The {\it association class} of an LND $\delta$, denoted by $[\delta]$, is defined to be $\{ \zeta \in {\rm LND}(B) ~\vert~ \zeta=\lambda \delta, \lambda \in k^* \}$.

\begin{lem}\label{bij}
The mapping $\Psi$ gives a bijection between association classes of irreducible LNDs of $k[x,y]$ and right cosets of $T_2(k)$.
\end{lem}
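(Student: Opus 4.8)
The plan is to show that $\Psi$ descends to a well-defined map on association classes and that the induced map is both injective and surjective. The crux is a precise description of the non-uniqueness in the Rentschler normal form $\delta=\frac{\partial}{\partial g}$, and everything else is formal once this is settled.

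First I would fix an irreducible $\delta$ and determine all pairs $(f',g')$ with $k[x,y]=k[f',g']$ that represent a member of the association class $[\delta]$, i.e. such that $\frac{\partial}{\partial g'}=\lambda\delta$ for some $\lambda\in k^*$. Writing $\delta=\frac{\partial}{\partial g}$ in the coordinates coming from $\alpha=(f,g)$, any such pair has the same kernel $k[f']=\mathrm{Ker}\,\delta=k[f]$, so $f'=q(f)\in k[f]$. Since $\{f',g'\}$ is a coordinate system, the Jacobian of $(f',g')$ taken with respect to $\{f,g\}$ is a nonzero constant; as $f'$ depends only on $f$, this Jacobian equals $q'(f)\frac{\partial g'}{\partial g}$, which forces $q'$ to be a nonzero constant and hence $f'=af+b$ with $a\in k^*$, $b\in k$. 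Applying $\frac{\partial}{\partial g'}=\lambda\delta$ to $g'$ gives $\delta(g')=1/\lambda=:c\in k^*$, and since $\delta=\frac{\partial}{\partial g}$ this yields $g'=cg+B(f)$ for some $B\in k[f]$. Thus the representations of $[\delta]$ are exactly the pairs $(af+b,\;cg+B(f))$ with $a,c\in k^*$, $b\in k$, $B\in k[f]$.

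The second step is to recognise this family as a single coset of $T_2(k)$. The change of variables $(f,g)\mapsto(af+b,\;cg+B(f))$ is realised by replacing $\alpha$ with $\alpha\circ\tau$, where $\tau\in T_2(k)$ is the triangular automorphism $(x,y)\mapsto(ax+b,\;cy+B(x))$; conversely every element of $T_2(k)$ has this shape, and one checks directly $\alpha\circ\tau(x)=af+b$ and $\alpha\circ\tau(y)=cg+B(f)$. Hence the set of automorphisms representing members of $[\delta]$ is precisely the coset $\Psi(\delta)=T_2(k)\alpha$, and it depends only on $[\delta]$. This simultaneously proves that $\Psi$ is well defined on association classes (changing the representative of $[\delta]$ keeps us in the same coset) and that it is injective: if $T_2(k)\alpha_1=T_2(k)\alpha_2$, then $\alpha_2$ represents a member of $[\delta_1]$, so $\delta_1$ and $\delta_2$ are scalar multiples, i.e. $[\delta_1]=[\delta_2]$.

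For surjectivity I would start from an arbitrary coset $T_2(k)\alpha$, pick the representative $\alpha$, and set $f=\alpha(x)$, $g=\alpha(y)$, $\delta=\frac{\partial}{\partial g}$. Since $g$ is a variable, $\delta$ is an irreducible LND (its image contains $\delta(g)=1$, so it lies in no proper principal ideal), and by construction $\Psi(\delta)=T_2(k)\alpha$. The main obstacle is the first step: pinning down the exact ambiguity in the normal form, namely the two reductions $f'=af+b$ and $g'=cg+B(f)$, and then matching the resulting family of coordinate changes with $T_2(k)$ on the side required by the statement. Note that only Rentschler's Theorem~\ref{R} is needed for the bijection; the amalgamated-product description of $\mathrm{Aut}(k^{[2]})$ in Theorem~\ref{jvk} is not used here.
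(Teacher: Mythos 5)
Your proof is correct and follows essentially the same route as the paper: you pin down the ambiguity in the Rentschler normal form as exactly $(f,g)\mapsto(af+b,\;cg+B(f))$ with $a,c\in k^*$, identify that family with the coset $\mathrm{T}_2(k)\alpha$, and read off well-definedness, injectivity and surjectivity from this characterization. The only cosmetic differences are that you obtain $f'=af+b$ from a Jacobian computation where the paper uses $k[f]=k[f']$ directly, and that you explicitly note the irreducibility of $\frac{\partial}{\partial g}$ in the surjectivity step, which the paper leaves implicit.
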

\begin{proof}
Let $\delta \in {\rm LND}(k[x,y])$ be irreducible. Suppose $\alpha : (x,y)\mapsto (f,g)$ and $\alpha_1\colon(x,y)\mapsto (f_1,g_1)$ be two distinct automorphisms such that $\dfrac{\partial }{\partial g}=\delta=\dfrac{\partial }{\partial g_1}.$
 Then ${\rm Ker}~ \delta=k[f]=k[f_1]$. Therefore, $f_1=af+b$, where $a \in k^*$ and $b \in k$. Then, we have $g_1=cg+P(f)$ where $c \in k^*$ and $P(f) \in k[f]$. So, right cosets $\mathrm{T}_2(k)\alpha$ and $\mathrm{T}_2(k)\alpha_1$ coincide.  Now let us consider $\lambda\delta$, $\lambda\in k^*$. It corresponds to the automorphism $\alpha' : (x,y)\mapsto (f,\frac{g}{\lambda})$. Then $\Psi(\delta)=\mathrm{T}_2(k)\alpha=\mathrm{T}_2(k)\alpha'=\Psi(\lambda\delta)$. Thus $\Psi$ induces a well-defined map from the set of association classes of $\delta$ to the set of right cosets of ${\rm T}_2(k)$. For convenience, we also call that map $\Psi$. 

\smallskip
\noindent 
Suppose $\mathrm{T}_2(k)\alpha=\mathrm{T}_2(k)\alpha_1$. Then $f_1=af+b$ and $g_1=cg+P(f)$, where $a,c \in k^*$, $b \in k$ and $P(f) \in k[f]$. Therefore, $\frac{\partial}{\partial g_1}=\frac{1}{c}\frac{\partial}{\partial g}$. This provides injectivity of $\Psi$. For each automorphism $\alpha\colon(x,y)\mapsto (f,g)$ we have $\mathrm{T}_2(k)\alpha=\Psi\left(\frac{\partial}{\partial g}\right)$. Therefore, $\Psi$ is surjective. 
\end{proof}

\begin{lem}\label{hl}
Let $\beta \in {\rm Af}_2(k)$ and $\alpha \in {\rm Aut}(k^{[2]})$. Let $D,E \in {\rm LND}(k^{[2]})$ be such that for the association classes $[D]$ and $[E]$, we have $\Psi([D])=\mathrm{T}_2(k)\alpha$ and $\Psi([E])=\mathrm{T}_2(k)\beta\circ\alpha$. Then $D$ and $E$ commute.
\end{lem}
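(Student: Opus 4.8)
The plan is to make both $D$ and $E$ explicit in one polynomial coordinate system and to check that in that system both have constant coefficients, so that their commutator vanishes trivially. First I would read off $D$. By Theorem~\ref{R} and the definition of $\Psi$, the equality $\Psi([D])=\mathrm{T}_2(k)\alpha$ with $\alpha\colon(x,y)\mapsto(f,g)$ means $k[x,y]=k[f,g]$, $\mathrm{Ker}\,D=k[f]$, and $D=\lambda\frac{\partial}{\partial g}$ for some $\lambda\in k^*$ (the partial derivative taken in the coordinate system $\{f,g\}$); here $D$ is irreducible since $\Psi$ is defined only on irreducible LNDs, and it is pinned down by its association class only up to the scalar $\lambda$, which is harmless for commutation.

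Next I would produce an equally explicit form for $E$. Writing $\beta\colon(x,y)\mapsto(ax+by+e,\,cx+dy+h)$ with $\left(\begin{smallmatrix}a&b\\c&d\end{smallmatrix}\right)\in\mathrm{GL}_2(k)$, I would compute a representative of the coset $\mathrm{T}_2(k)\beta\alpha$ and check that it carries $(x,y)$ to $(f_1,g_1)$ with
$$f_1=af+bg+e,\qquad g_1=cf+dg+h.$$
The decisive point is that composing the coordinate pair $(f,g)$ with the \emph{affine} automorphism $\beta$ produces affine (degree $\le 1$) combinations of $f$ and $g$; by the definition of $\Psi$ and Lemma~\ref{bij} this gives $\mathrm{Ker}\,E=k[f_1]$ and $E=\mu\frac{\partial}{\partial g_1}$ for some $\mu\in k^*$.

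Finally I would change to the coordinates $u=f$, $v=g$, which is legitimate because $k[x,y]=k[f,g]$. In these coordinates $D=\lambda\frac{\partial}{\partial v}$. For $E$, the conditions $E(f_1)=0$ and $E(g_1)=\mu$ become the linear system $aE(u)+bE(v)=0$, $cE(u)+dE(v)=\mu$; since the coefficient matrix is invertible, $E(u)$ and $E(v)$ are scalars, so $E=E(u)\frac{\partial}{\partial u}+E(v)\frac{\partial}{\partial v}$ is a constant-coefficient derivation in the system $\{u,v\}$. As $D$ and $E$ then both have constant coefficients in $\{u,v\}$, their commutator is zero.

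I expect the main obstacle to be the bookkeeping in the second step: fixing the composition convention behind $\Psi$ and verifying the formulas $f_1=af+bg+e$, $g_1=cf+dg+h$. This is exactly where the hypothesis $\beta\in\mathrm{Af}_2(k)$ is used: affineness keeps $f_1,g_1$ linear in $f,g$ and hence makes $E$ constant-coefficient in $\{f,g\}$; for a general $\beta\in\mathrm{Aut}(k^{[2]})$ the derivation $E$ would not have constant coefficients in these coordinates and need not commute with $D$. A convention-free alternative, should the coset computation prove delicate, is to note that $D\sim\alpha\,\frac{\partial}{\partial y}\,\alpha^{-1}$ and $E\sim(\alpha\beta)\,\frac{\partial}{\partial y}\,(\alpha\beta)^{-1}$, to conjugate by $\alpha^{-1}$, and thereby reduce the claim to the statement that $\frac{\partial}{\partial y}$ commutes with $\beta\,\frac{\partial}{\partial y}\,\beta^{-1}$, which holds because conjugation by an affine automorphism preserves the two-dimensional space of constant-coefficient derivations.
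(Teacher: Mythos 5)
Your proposal is correct and follows essentially the same route as the paper: both arguments normalize $D$ to a partial derivative in the coordinate system $\{f,g\}$ given by $\alpha$, use the affineness of $\beta$ to deduce that $E(f),E(g)\in k$ from $E(a f+b g+e)=0$ and $E(c f+d g+h)\in k^*$ via the invertible linear system, and conclude that two constant-coefficient derivations in the system $\{f,g\}$ commute. The composition convention you flag as the main risk is resolved exactly as you guessed ($f_1=af+bg+e$, $g_1=cf+dg+h$), so no gap remains.
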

\begin{proof}
Denote $\alpha=(f,g)$. We can assume that $D(f)=0$, $D(g)=1$. Then
$$\beta\circ\alpha\colon
\begin{pmatrix}
x\\
y
\end{pmatrix}\mapsto
\begin{pmatrix} a_{11} & a_{12} \\
a_{21} & a_{22} \end{pmatrix} \begin{pmatrix}
f\\
g
\end{pmatrix}+
\begin{pmatrix}
b_1\\
b_2
\end{pmatrix},
$$ where $\begin{pmatrix} a_{11} & a_{12} \\
a_{21} & a_{22} \end{pmatrix} \in {\rm GL}_2(k)$ and $b_1,b_2 \in k$.
We can further assume that 
$$E(a_{11}f+a_{12}g+b_1)=a_{11}E(f)+a_{12}E(g)=0\text{ and }E(a_{21}f+a_{22}g+b_2)=a_{21}E(f)+a_{22}E(g)=1.$$ 
Therefore, $E(f), E(g) \in k$. So, we have
$$
ED(f)=E(0)=0=DE(f), \qquad ED(g)=E(1)=0=DE(g).
$$ 
Thus, $ED=DE$.
\end{proof}

\begin{prop}\label{CLNDC2var}
Each irreducible $\delta \in {\rm LND}(k^{[2]})$ can be constructed from an irreducible C-chain starting from $\delta_1=\frac{\partial}{\partial x}$ and $\delta_2=\frac{\partial}{\partial y}$. \end{prop}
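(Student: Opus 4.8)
The plan is to read the statement through the bijection $\Psi$ of Lemma~\ref{bij} and reduce it to a connectivity statement in the amalgamated free product $\mathrm{Aut}(k^{[2]})=\mathrm{Af}_2(k)\ast_C T_2(k)$ of Theorem~\ref{jvk}. By Lemma~\ref{bij}, the association class of an irreducible LND is recorded by a right coset $T_2(k)\alpha$, so I would first record the two seeds: since $\mathrm{Ker}\,\frac{\partial}{\partial x}=k[y]$ and $\mathrm{Ker}\,\frac{\partial}{\partial y}=k[x]$, Rentschler's theorem (Theorem~\ref{R}) gives $\Psi\big([\tfrac{\partial}{\partial y}]\big)=T_2(k)$ (the trivial coset) and $\Psi\big([\tfrac{\partial}{\partial x}]\big)=T_2(k)\tau$, where $\tau\in\mathrm{Af}_2(k)$ is the linear swap $(x,y)\mapsto(y,x)$. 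Passing to the Bass--Serre tree of the amalgam, the association classes of irreducible LNDs are exactly the $T_2(k)$-vertices, the two seeds sit at the base $T_2(k)$-vertex and at $T_2(k)\tau$, and these two vertices share the $\mathrm{Af}_2(k)$-vertex attached to the trivial coset, consistently with $\frac{\partial}{\partial x}$ and $\frac{\partial}{\partial y}$ commuting.

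The heart of the argument is to describe one step of the C-construction in coset language. Suppose a chain has been built up to a consecutive commuting pair $\delta_{i-1},\delta_i$ with $\delta_i$ irreducible and $\delta_{i-1}\not\sim\delta_i$. I claim that from this pair the C-construction of Lemma~\ref{lndcommute} can produce an irreducible LND lying in any association class whose coset shares an $\mathrm{Af}_2(k)$-vertex with the coset of $\delta_i$. Indeed, Lemma~\ref{hl} shows that any LND whose coset is $T_2(k)\,\beta\circ\alpha$ with $\beta\in\mathrm{Af}_2(k)$ (for a suitable representative $\alpha$ of the coset of $\delta_i$) commutes with $\delta_i$; and Lemma~\ref{dimtwo} shows conversely that every LND commuting with the irreducible $\delta_i$ has the shape $\lambda\delta_{i-1}+\sigma\delta_i$ with $\lambda\in k^*$ and $\sigma\in\mathrm{Ker}\,\delta_i$, which is precisely a C-construction output (taking $f_1=\lambda$, $f_2=\sigma$ in Lemma~\ref{lndcommute}); here one uses that $\mathrm{Ker}\,\delta_{i-1}\cap\mathrm{Ker}\,\delta_i=k$, so the coefficient $\lambda$ is a genuine constant. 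Choosing for $\delta_{i+1}$ the irreducible member of the target class realises the desired tree-neighbour by a single C-step and keeps the chain irreducible.

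I would then conclude by connectivity of the tree. Given the target irreducible $\delta$ with $\Psi([\delta])=T_2(k)\gamma$, take the geodesic in the Bass--Serre tree from the base $T_2(k)$-vertex (the class of $\delta_2=\frac{\partial}{\partial y}$) to $T_2(k)\gamma$. The tree is bipartite with parts the $\mathrm{Af}_2(k)$-vertices and the $T_2(k)$-vertices, so this geodesic alternates and its successive $T_2(k)$-vertices $W_0,W_1,\dots,W_n$ (with $W_0$ the base and $W_n=T_2(k)\gamma$) each share an $\mathrm{Af}_2(k)$-vertex with the previous one. Realising $W_j$ by $\delta_{j+2}$ turns this path into an irreducible C-chain $\delta_1=\frac{\partial}{\partial x},\ \delta_2=\frac{\partial}{\partial y},\ \delta_3,\dots,\delta_{n+2}=\delta$, in which the seed $\delta_1$ serves only as the initial commuting partner enabling the first application of Lemma~\ref{dimtwo}; the degenerate cases where $\delta$ already lies in the class of $\delta_1$ or $\delta_2$ are immediate.

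The main obstacle is the middle step: making precise the dictionary between the algebraic C-construction and the tree combinatorics. Concretely, one must check that the LNDs realising the neighbouring cosets are exactly those produced as $\lambda\delta_{i-1}+\sigma\delta_i$, combining the sufficiency of Lemma~\ref{hl} (commuting from a shared $\mathrm{Af}_2(k)$-vertex, read off with an appropriate coset representative) with Lemma~\ref{dimtwo} (every commuting LND is such a combination), and that an irreducible representative can be selected at every stage so the whole chain stays irreducible. The remaining points are routine: that $\mathrm{Ker}\,\delta_{i-1}\cap\mathrm{Ker}\,\delta_i=k$ forces the $\delta_{i-1}$-coefficient to match the constant pattern required by Lemma~\ref{lndcommute}, and the bookkeeping of the short base cases.
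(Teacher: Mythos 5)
Your proposal is correct and follows essentially the same route as the paper: both arguments translate association classes of irreducible LNDs into right cosets of $\mathrm{T}_2(k)$ via $\Psi$ (Lemma \ref{bij}), use the amalgamated product structure of Theorem \ref{jvk} to produce a chain of cosets each differing from the next by an element of $\mathrm{Af}_2(k)$, and then combine Lemma \ref{hl} (such neighbours commute) with Lemma \ref{dimtwo} (every LND commuting with an irreducible one is $\lambda\delta_{i-1}+\sigma\delta_i$) to turn that chain into a C-chain. Your Bass--Serre tree geodesic is just a geometric repackaging of the paper's reduced-word coset sequence, so no substantive difference.
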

\begin{proof}
Let $\alpha\in\Psi(D)$. By Theorem \ref{jvk}, we can decompose $\alpha=\xi_m\circ\ldots\circ\xi_1$, where each $\xi_j \in {\rm Af}_2(k) \cup {\rm T}_2(k)$. Consider sequence of right cosets 
$$\mathrm{T}_2(k), \mathrm{T}_2(k)\xi_1, \mathrm{T}_2(k)\xi_2\circ \xi_1, \ldots, \mathrm{T}_2(k)\xi_m\circ\ldots\circ\xi_1.$$
If two consecutive cosets in the sequence are equal, we remove one of them. Thus we obtain sequence of right cosets $\mathrm{T}_2(k)\gamma_1,\ldots, \mathrm{T}_2(k)\gamma_l$, where $\gamma_i \in {\rm Aut}(k^{[2]})$, $1 \leqslant i \leqslant l$.  Let us take $\delta_i \in {\rm LND}(k^{[2]})$ such that $\Psi(\delta_i)=\mathrm{T}_2(k)\gamma_i$. For any $i$, if $\mathrm{T}_2(k)\gamma_i =\mathrm{T}_2(k)\xi_s\circ\ldots\circ\xi_1$ and $\mathrm{T}_2(k)\gamma_{i+1} =\mathrm{T}_2(k)\xi_{s+1}\circ\xi_s\circ\ldots\circ\xi_1$, then  $\xi_{s+1} \in {\rm Af}_2(k) \setminus {\rm T}_2(k)$. By Lemma \ref{hl}, $\delta_i$ commutes with $\delta_{i-1}$ and $\delta_{i+1}$. Hence, by Lemma~\ref{dimtwo}, $\delta_{i+1}=\sigma \delta_i+\lambda \delta_{i-1}$, where $\sigma\in {\rm Ker}~\delta_i$, $\lambda\in k^*$. Since  $\alpha=\xi_m\circ\ldots\circ\xi_1$, we can put $\delta_{l}=\delta$. We start with $\gamma_1=\mathrm{id}$, $\delta_1=\frac{\partial}{\partial y}$ and define $\delta_0:=\frac{\partial}{\partial x}$. By construction, $\delta_2$ commutes with $\delta_1$. By Lemma~\ref{dimtwo}, $\delta_2$ is obtained by C-construction from $\delta_0$ and $\delta_1$. Moreover, it is irreducible. So, $\delta_0=\frac{\partial}{\partial x},\delta_1=\frac{\partial}{\partial y},\ldots, \delta_l=\delta$ is an irreducible C-chain.

\end{proof}

\begin{defn}{\em 
A C-chain, starting from $\frac{\partial}{\partial x}$ and $\frac{\partial}{\partial y}$ will be called a {\it full C-chain}.
} \end{defn}

\smallskip
\noindent
The next result is a partial converse of Lemma \ref{hl}
\begin{lem}\label{com2}
Suppose $\Delta$ and $D$ are commuting irreducible LNDs of $k^{[2]}$. Then there exist such $\alpha\in\Psi(D)$ and $\theta\in \mathrm{Af}_2(k)$ that $\theta\circ\alpha\in \Psi(\Delta)$.
\end{lem}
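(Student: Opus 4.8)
The plan is to read off from the commuting hypothesis a single coordinate adapted to both derivations, and then to realise the coset of $\Delta$ from that of $D$ by a coordinate swap. First I would use Rentschler's theorem (Theorem~\ref{R}) to fix a coordinate system $\{f,g\}$ with $D=\frac{\partial}{\partial g}$, so that $\mathrm{Ker}\,D=k[f]$ and $\alpha_0=(f,g)$ represents $\Psi(D)=\mathrm{T}_2(k)\alpha_0$. Since $D\Delta=\Delta D$, the subring $\mathrm{Ker}\,D=k[f]$ is $\Delta$-stable (if $D u=0$ then $D(\Delta u)=\Delta(Du)=0$), and the restriction $\Delta|_{k[f]}$ is a locally nilpotent derivation of $k[f]\cong k^{[1]}$; hence $\Delta(f)=c\in k$. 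This splits the argument into two cases according to whether $c=0$, exactly as in the proof of Lemma~\ref{dimtwo}.

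If $c=0$, then $f\in\mathrm{Ker}\,\Delta$, and I would show the two kernels coincide. Write $\mathrm{Ker}\,\Delta=k[h]$ with $h$ a variable (Theorem~\ref{R}); by symmetry the commuting hypothesis gives $D(h)\in k$. If $D(h)\neq0$, then $h$ would be a slice of $D$, so by the Slice Theorem (Theorem~\ref{st}) $\{f,h\}$ would be a coordinate system, which is impossible because $f\in k[h]$ forces $k[f,h]=k[h]\neq k^{[2]}$. Hence $D(h)=0$, so $k[f]=k[h]$ and $D,\Delta$ are equivalent; being both irreducible they are proportional, so $\Psi(D)=\Psi(\Delta)$ and one takes $\theta=\mathrm{id}\in\mathrm{Af}_2(k)$ and $\alpha=\alpha_0$.

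The substantial case is $c\neq0$. Then $f/c$ is a slice of $\Delta$, so the Slice Theorem gives $k^{[2]}=\mathrm{Ker}\,\Delta[f]=k[h,f]$; thus $\{f,h\}$ is a coordinate system with $\mathrm{Ker}\,\Delta=k[h]$, the derivation $\Delta$ is proportional to $\frac{\partial}{\partial f}$, and $\beta=(h,f)$ represents $\Psi(\Delta)$. Now both $\{f,g\}$ and $\{f,h\}$ are coordinate systems sharing $f$, so $h$ generates $k^{[2]}$ as a $k[f]$-algebra, i.e.\ $h$ is a variable of $k[f][g]\cong k[f]^{[1]}$ over $k[f]$; since the units of $k[f]$ are $k^*$, this yields $h=\lambda g+q(f)$ with $\lambda\in k^*$ and $q\in k[f]$. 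Consequently $\alpha:=(f,h)=(x,\lambda y+q(x))\circ\alpha_0$ lies in $\mathrm{T}_2(k)\alpha_0=\Psi(D)$, while the affine swap $\theta=(y,x)\in\mathrm{Af}_2(k)$ satisfies $\theta\circ\alpha=(h,f)=\beta\in\Psi(\Delta)$, which is precisely the assertion (and exhibits the affine map making Lemma~\ref{hl} applicable in reverse).

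The main obstacle is the extraction of the common coordinate $f$: everything hinges on showing that a generator of $\mathrm{Ker}\,D$ is simultaneously a scalar multiple of a slice of $\Delta$, which is what the restriction computation $\Delta(f)\in k$ together with the Slice Theorem delivers, and on disposing cleanly of the degenerate equivalent case where no genuine swap is available. Once both coordinate systems are seen to contain $f$, the relation $h=\lambda g+q(f)$ and the identification of $\theta$ as the coordinate exchange are routine, provided one keeps the substitution convention for composition in the definition of $\Psi$ consistent throughout.
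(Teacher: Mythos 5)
Your proof is correct, and it reaches the paper's conclusion by a somewhat different mechanism. The paper first disposes of the equivalent case in one line, then invokes Lemma~\ref{dimtwo} to write $\Delta=\sigma D+\lambda E$ with $E=\frac{\partial}{\partial f}$, and \emph{integrates} $\sigma$ to exhibit the kernel generator $\lambda g-h(f)$ of $\Delta$ explicitly; the triangular factor $\gamma$ and the swap $\theta$ then drop out. You instead take only the first half of that argument (the restriction of $\Delta$ to $\mathrm{Ker}\,D=k[f]$ gives $\Delta(f)=c\in k$, exactly as in the proof of Lemma~\ref{dimtwo}), and replace the decomposition-plus-integration step by the Slice Theorem (Theorem~\ref{st}): for $c\neq 0$ the element $f/c$ is a slice of $\Delta$, so $k^{[2]}=k[h,f]$, and the standard fact that a variable of $k[f]^{[1]}$ over $k[f]$ is affine in $g$ yields $h=\lambda g+q(f)$, giving the same triangular-then-swap factorization. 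What the paper's route buys is an explicit formula for the kernel generator of $\Delta$ in terms of the data of Lemma~\ref{dimtwo}, which is the lemma the authors have just established and reuse throughout Section~3; what your route buys is independence from Lemma~\ref{dimtwo} and a cleaner identification of \emph{when} the degenerate case occurs ($c=0$ forces $\mathrm{Ker}\,\Delta=\mathrm{Ker}\,D$, and two irreducible LNDs of $k^{[2]}$ with equal kernels are proportional by the argument of Lemma~\ref{bij}). Your closing remark about the substitution convention for composition is well taken: it is the one point where a sign of carelessness would break the identification $\theta\circ\alpha=(h,f)$, and your usage matches the convention the paper fixes in Lemma~\ref{hl}.
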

\begin{proof}
It is enough to consider the case when $\Delta$ and $D$ are non-equivalent. Indeed, if $\Delta=tD$ for some $t \in k^*$, then $\Psi(\Delta)=\Psi(D)$. Now suppose we have $D=\frac{\partial}{\partial g}$, where $\tau\colon (x,y)\mapsto(f,g)$ is an automorphism from $\Psi(D)$. Denote $E=\frac{\partial}{\partial f}$. Then $DE=ED$. By Lemma \ref{dimtwo}, $\Delta=\sigma D+\lambda E$, where $\sigma\in{\rm Ker}~ D=k[f]$ and $\lambda \in k^*$. Let $h\in k[f]$ be such that $\frac{d h}{df}=\sigma$. We have $\Delta(f)=\lambda$, $\Delta(g)=\sigma$. Therefore $\Delta(\lambda g-h(f))=0$. Consider the automorphism
$$
\gamma\colon
\begin{pmatrix}
x\\
y
\end{pmatrix}\mapsto
\begin{pmatrix}
x\\
\lambda y-h(x)
\end{pmatrix}
$$
Clearly, $\gamma\in\mathrm{T}_2(k).$
Then 
$$
\alpha=\gamma\circ\tau\colon
\begin{pmatrix}
x\\
y
\end{pmatrix}\mapsto
\begin{pmatrix}
f\\
\lambda g-h(f)
\end{pmatrix}.
$$
Let us put 
$$
\theta\colon 
\begin{pmatrix}
x\\
y
\end{pmatrix}\mapsto
\begin{pmatrix}
y\\
x
\end{pmatrix},\qquad \theta\in \mathrm{Af}_2(k).
$$
We have $\alpha\in \mathrm{T}_2(k)\tau=\Psi(D)$. Choose $\delta \in {\rm LND}(k[x,y])$ such that $\Psi([\delta])=\mathrm{T}_2(k)\theta\circ\alpha$. Then $\delta(f)=\mu$, for some $\mu \in k^*$ and $\delta(\lambda  g-h(f))=0$. Hence, $\delta(g)=\frac{\mu\sigma(f)}{\lambda}$. So, $\Delta=\frac{\lambda}{\mu}\delta$, i.e. $\theta\circ\alpha\in\Psi(\Delta)$.
\end{proof}

\smallskip
\noindent
We immediately have the following corollary.
\begin{cor}\label{c1}
Let $\delta_1, \delta_2,\ldots, \delta_l$ be a full irreducible C-chain in $k^{[2]}$. Then there exists a chain of automorphisms $\alpha_2=\mathrm{id},\beta_2,\ldots, \alpha_l$ such that $\beta_i=\tau_i\circ\alpha_i$, $\alpha_{i+1}=\gamma_i\circ\beta_i$, where $\gamma_i\in\mathrm{Af}_2(k)$, $\tau_i\in\mathrm{T}_2(k)$, and  $\alpha_{i},\beta_{i}\in \Psi(\delta_i)$, for each $i \in \{1, \dots ,l\}$.
\end{cor}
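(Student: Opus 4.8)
The plan is to construct the automorphisms $\alpha_i$ and $\beta_i$ by induction on $i$, with Lemma~\ref{com2} doing all the real work and the bijection $\Psi$ of Lemma~\ref{bij} used to translate the assertion into bookkeeping about right cosets of $\mathrm{T}_2(k)$. The key point is that the two required relations—$\beta_i=\tau_i\circ\alpha_i$ with $\tau_i\in\mathrm{T}_2(k)$, and $\alpha_{i+1}=\gamma_i\circ\beta_i$ with $\gamma_i\in\mathrm{Af}_2(k)$—are precisely the output of Lemma~\ref{com2} once a coset representative has been fixed at the previous stage.

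For the base case, since $\delta_2=\frac{\partial}{\partial y}$ has $\mathrm{Ker}\,\delta_2=k[x]$, the identity automorphism represents its coset, so I put $\alpha_2=\mathrm{id}\in\Psi(\delta_2)$. For the inductive step, assume $\alpha_i$ has been chosen with $\Psi(\delta_i)=\mathrm{T}_2(k)\alpha_i$. Because $\delta_1,\ldots,\delta_l$ is an \emph{irreducible} C-chain, the consecutive terms $\delta_i,\delta_{i+1}$ are commuting irreducible LNDs of $k^{[2]}$, so Lemma~\ref{com2} furnishes a representative $\beta_i\in\Psi(\delta_i)$ and an affine automorphism $\gamma_i\in\mathrm{Af}_2(k)$ with $\gamma_i\circ\beta_i\in\Psi(\delta_{i+1})$. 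Since $\alpha_i$ and $\beta_i$ lie in the same right coset $\mathrm{T}_2(k)\alpha_i$, I can write $\beta_i=\tau_i\circ\alpha_i$ for a unique $\tau_i\in\mathrm{T}_2(k)$, giving the first relation; putting $\alpha_{i+1}:=\gamma_i\circ\beta_i\in\Psi(\delta_{i+1})$ gives the second and produces the representative needed to continue. Running this from $i=2$ through $i=l-1$ yields the chain $\alpha_2=\mathrm{id},\beta_2,\alpha_3,\ldots,\alpha_l$; at the top I set $\beta_l=\alpha_l$ (that is, $\tau_l=\mathrm{id}$), and the index $i=1$ is accommodated by taking $\alpha_1$ to be the transposition representing $\Psi(\delta_1)$, where $\delta_1=\frac{\partial}{\partial x}$.

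I do not anticipate a genuine obstacle, as the statement is essentially a repackaging of Lemma~\ref{com2}. The only points demanding care are confirming that irreducibility of the C-chain really places each pair $(\delta_i,\delta_{i+1})$ within the hypotheses of Lemma~\ref{com2} (including the harmless case of coinciding kernels, which that lemma already handles), and being consistent about the side on which $\mathrm{T}_2(k)$ acts, so that passing from the representative $\beta_i$ returned by the lemma to the previously fixed $\alpha_i$ costs a \emph{left} factor $\tau_i\in\mathrm{T}_2(k)$, matching the prescribed form $\beta_i=\tau_i\circ\alpha_i$.
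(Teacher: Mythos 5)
Your proposal is correct and follows essentially the same route as the paper: fix $\alpha_2=\mathrm{id}$, apply Lemma~\ref{com2} to each commuting pair $(\delta_i,\delta_{i+1})$ to obtain $\beta_i\in\Psi(\delta_i)$ and $\gamma_i\in\mathrm{Af}_2(k)$ with $\alpha_{i+1}=\gamma_i\circ\beta_i\in\Psi(\delta_{i+1})$, and observe that $\alpha_i$ and $\beta_i$ lie in the same right coset $\mathrm{T}_2(k)\alpha_i$, so $\beta_i=\tau_i\circ\alpha_i$ with $\tau_i\in\mathrm{T}_2(k)$. Your added remarks on the endpoints and on why irreducibility of the chain puts each consecutive pair within the hypotheses of Lemma~\ref{com2} are consistent with (and slightly more careful than) the paper's two-line argument.
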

\begin{proof}
We have $\alpha_2=\mathrm{id}\in\Psi(\frac{\partial}{\partial y})=\Psi(\delta_1)$. 
For each $i$, by Lemma~\ref{com2}, there exist $\beta_i\in \Psi(\delta_i)$ and $\gamma_i\in \mathrm{Af}_2(k)$ such that $\alpha_{i+1}:=\gamma_i\circ\beta_i\in \Psi(\delta_{i+1})$. Since $\alpha_{i}, \beta_{i}\in \Psi(\delta_i)=\mathrm{T}_2(k)\alpha_i$, we have $\beta_i=\tau_i\circ\alpha_i$, where $\tau_i\in \mathrm{T}_2(k)$.
\end{proof}

\smallskip
\noindent
Next we define the level of an LND.
\begin{defn}
{\em Let $\delta$ be an irreducible LND of $k[x,y]$. Assume $\delta_1, \delta_2,\ldots, \delta_l=\delta$ is a full irreducible C-chain with minimal possible $l$. Let us define the {\it level} of $\delta$ to be this integer $l$ and denote it by $\mathrm{lev}(\delta)_{(x,y)}$. 
}
\end{defn}

\begin{rem}
{\em The definition of level depends on the coordinate system chosen for $k^{[2]}$. Indeed, for an irreducible $\delta \in {\rm LND}(k^{[2]})$, there exist $f,g \in k^{[2]}$ such that $k[x,y]=k[f,g]$ and $\delta=\dfrac{\partial }{\partial f}$. Then $\mathrm{lev}(\delta)_{(f,g)}=1$. So for any irreducible LND of $k^{[2]}$, there exists a coordinate system of $k^{[2]}$ with respect to which the level is one.
}
\end{rem}

\begin{rem}\label{rr11}
{\em Let $\varphi\in\Psi(\delta)$. Assume $\delta \notin \Big [\dfrac{\partial}{\partial x}\Big ] \cup \Big[\dfrac{\partial}{\partial y}\Big ]$. Then Corollary~\ref{c1} implies that 
$\mathrm{lev}(\delta)_{(x,y)}=m+2$, where $m$ is the number of affine automorphisms in the shortest decomposition 
$$\varphi=\tau_m\circ\gamma_m\ldots\circ\tau_1\circ\gamma_1,$$ 
where each $\gamma_i \in {\rm Af}_2(k)$ and $\tau_i \in {\rm T}_2(k)$ ($1 \leqslant i \leqslant m$). By Theorem~\ref{jvk}, the condition that this decomposition is shortest is equivalent to saying that $\gamma_i\notin \mathrm{T}_2(k)$, $1\leqslant i\leqslant m$ and $\tau_j\notin \mathrm{Af}_2(k)$, $1\leqslant j\leqslant m-1$. Note that, each $\gamma_i$ can be decomposed as $\gamma_i=\tau'\circ\theta\circ\tau''$, where $\tau', \tau''\in \mathrm{T}_2(k)$ and $\theta$ is the swapping
$$
\theta\colon
\begin{pmatrix}
x\\
y
\end{pmatrix}
\mapsto
\begin{pmatrix}
y\\
x
\end{pmatrix}.
$$  Therefore, we can say that $\mathrm{lev}(\delta)_{(x,y)}=m+2$, where $m$ is the number of swappings in the shortest decomposition 
$\varphi=\tau_m\circ\theta\ldots\circ\tau_1\circ\theta\circ\tau_0$. 
}
\end{rem}

\smallskip
\noindent
The next proposition shows that given an irreducible $\delta \in {\rm LND}(k^{[2]})$ with level $l$ with respect to a fixed coordinate system, the LNDs in a full irreducible minimal C-chain are unique up to multiplication by scalars.
\begin{prop}
Let $\delta$ be an irreducible LND of $k[x,y]$ with $\mathrm{lev}(\delta)_{(x,y)}=l$. Assume 
$$\delta_1, \delta_2,\ldots, \delta_l=\delta\  \text{ and }\ \rho_1, \rho_2,\ldots, \rho_l=\delta$$ 
are two full irreducible minimal C-chains with length $l$ . Then for each $i \in \{1, \dots ,l\}$, we have $\rho_i=\lambda_i\delta_i$ for some non-zero $\lambda_i\in k$.
\end{prop}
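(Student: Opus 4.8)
The plan is to transport the statement into the language of right cosets of $\mathrm{T}_2(k)$ via the bijection $\Psi$ of Lemma~\ref{bij}. Since $\Psi$ identifies association classes of irreducible LNDs with right cosets, and the desired conclusion $\rho_i=\lambda_i\delta_i$ is exactly the assertion $[\rho_i]=[\delta_i]$, it suffices to prove that $\Psi(\delta_i)=\Psi(\rho_i)$ for every $i$. Both chains being full, they share their initial segment $\delta_1=\rho_1=\frac{\partial}{\partial x}$, $\delta_2=\rho_2=\frac{\partial}{\partial y}$ and their final term $\delta_l=\rho_l=\delta$. By Corollary~\ref{c1} the chain $\delta_1,\dots,\delta_l$ produces automorphisms $\alpha_i\in\Psi(\delta_i)$ with $\alpha_{i+1}=\gamma_i\tau_i\alpha_i$, where $\gamma_i\in\mathrm{Af}_2(k)$ and $\tau_i\in\mathrm{T}_2(k)$. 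By Remark~\ref{rr11}, minimality of $l$ is equivalent to saying that the induced factorisation of $\varphi=\alpha_l$ as an alternating product of triangular and affine automorphisms is a shortest (reduced) one in the amalgamated free product $\mathrm{Aut}(k^{[2]})=\mathrm{Af}_2(k)\ast_C\mathrm{T}_2(k)$ of Theorem~\ref{jvk}.

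I would then read the sequence $\Psi(\delta_1),\dots,\Psi(\delta_l)$ as the $\mathrm{T}_2(k)$-type vertices visited along a walk in the Bass--Serre tree $\mathcal T$ of this amalgam (built from right cosets of $\mathrm{Af}_2(k)$, $\mathrm{T}_2(k)$ and $C$). Two consecutive cosets $\Psi(\delta_i)$ and $\Psi(\delta_{i+1})$ share a common $\mathrm{Af}_2(k)$-neighbour, because $\alpha_{i+1}\alpha_i^{-1}\in\mathrm{Af}_2(k)\,\mathrm{T}_2(k)$; thus each C-construction step is a move between two $\mathrm{T}_2(k)$-vertices at tree-distance $2$. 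The crucial point is that minimality forbids backtracking: if the walk were to return along the edge it had just traversed at some $\Psi(\delta_i)$, then $\Psi(\delta_{i-1})$ and $\Psi(\delta_{i+1})$ would be adjacent to one and the same $\mathrm{Af}_2(k)$-vertex and hence themselves at distance $2$; by the converse direction encoded in Lemmas~\ref{hl} and~\ref{com2}, the LNDs $\delta_{i-1}$ and $\delta_{i+1}$ would then commute and be related by a single C-construction step, so deleting $\delta_i$ would yield a strictly shorter full C-chain ending at $\delta$, contradicting $\mathrm{lev}(\delta)_{(x,y)}=l$. Since a non-backtracking walk in a tree is automatically geodesic, the sequence of $\mathrm{T}_2(k)$-vertices is the geodesic from $\Psi(\delta_1)$ to $\Psi(\delta)$.

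The conclusion is then immediate: geodesics between two fixed vertices of a tree are unique, so the two geodesics coming from $\{\delta_i\}$ and $\{\rho_i\}$, having the same endpoints, pass through exactly the same $\mathrm{T}_2(k)$-vertices. Hence $\Psi(\delta_i)=\Psi(\rho_i)$ for all $i$, and the bijectivity of $\Psi$ (Lemma~\ref{bij}) gives $[\delta_i]=[\rho_i]$, i.e. $\rho_i=\lambda_i\delta_i$ for suitable $\lambda_i\in k^*$. Alternatively one may bypass the tree and invoke the normal form theorem for amalgamated free products directly: for a reduced word the partial products are determined up to left multiplication by $C\subseteq\mathrm{T}_2(k)$, which is precisely the equality of the cosets $\Psi(\delta_i)$.

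I expect the main obstacle to be the rigorous justification of the ``minimality forbids backtracking'' step, that is, turning a distance-$2$ coincidence of $\mathrm{T}_2(k)$-cosets back into a genuine, irreducibility-preserving single C-construction step so that the shortened chain is again a legitimate full irreducible C-chain. This is where Lemma~\ref{com2} (the partial converse of Lemma~\ref{hl}) does the real work, together with the book-keeping that the shortened chain still begins with $\frac{\partial}{\partial x},\frac{\partial}{\partial y}$ and still terminates at $\delta$. A secondary nuisance is the treatment of the degenerate low-level cases $\delta\in[\frac{\partial}{\partial x}]\cup[\frac{\partial}{\partial y}]$, where the geodesic is trivial and the statement should be checked directly.
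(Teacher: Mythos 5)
Your proposal is correct and, in substance, identical to the paper's proof: the authors also pass through Corollary~\ref{c1} to get the alternating factorisations of $\alpha_l$ and $\alpha'_l$, use minimality of $l$ to conclude these words are reduced, and then invoke the amalgamated free product structure of Theorem~\ref{jvk} to deduce $\alpha'_j=\eta_j\circ\alpha_j$ with $\eta_j\in\mathrm{T}_2(k)\cap\mathrm{Af}_2(k)$, finishing with Lemma~\ref{bij} — exactly the ``normal form'' alternative you state at the end, of which your Bass--Serre tree picture is just an equivalent repackaging.
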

\begin{proof}
From two full irreducible minimal C-chains, by Corollary \ref{c1}, we obtain two sequences of automorphisms: $\alpha_{1}=\mathrm{id},\beta_1,\alpha_2,\beta_2,\ldots, \alpha_l$ and $\alpha'_{1}=\mathrm{id},\beta'_1,\alpha'_2,\beta'_2,\ldots, \alpha'_l$. By construction, we have $\alpha_l,\alpha'_l\in \Psi(\delta)$. Hence, $\alpha_l'\in \mathrm{T}_2(k)\alpha_l$. So there exists $\omega\in\mathrm{T}_2(k)$ such that $\alpha'_l=\omega\circ \alpha_l$. Note that, $\beta_i=\tau_i\circ\alpha_i$ and $\alpha_{i+1}=\gamma_i\circ\beta_i$, where $\gamma_i\in\mathrm{Af}_2(k)$, $\tau_i\in\mathrm{T}_2(k)$. Similarly, $\beta'_i=\tau'_i\circ\alpha'_i$, $\alpha'_{i+1}=\gamma'_i\circ\beta'_i$, where $\gamma'_i\in\mathrm{Af}_2(k)$, $\tau'_i\in\mathrm{T}_2(k)$. So, we obtain
$$
\gamma'_{l-1}\circ\tau'_{l-1}\circ\ldots\circ\gamma'_1\circ\tau'_1=\alpha'_l=\omega\circ \alpha_l=\omega \circ \gamma_{l-1}\circ\tau_{l-1}\circ\ldots\circ\gamma_1\circ\tau_1.
$$

Since $l$ is minimal, we have $\gamma_{i}, \gamma'_{i}\notin \mathrm{T}_2(k)$ and $\tau_j,\tau'_j\notin\mathrm{Af}_2(k)$. By Theorem~\ref{jvk}, we then have $\alpha'_j=\eta_j\circ\alpha_j$ for each $j \in \{1, \dots ,l\}$, where $\eta_j\in\mathrm{T}_2(k)\cap \mathrm{Af}_2(k)$. Therefore, $\Psi^{-1}(\alpha_j)=\Psi^{-1}(\alpha'_j)$. Since $\alpha_j \in \Psi(\delta_j)$, $\alpha'_j \in \Psi(\rho_j)$ and ${\rm T}_2(k)\alpha_j={\rm T}_2(k)\alpha'_j$, by Lemma \ref{bij}, we are done. 
\end{proof}

\subsection{LNDs of rank at most $2$}
This subsection shows that one can apply the C-construction to construct LNDs of $k^{[n]}$ of rank at most $2$. Let $B=k[x_1,\ldots,x_n]$ and $D \in {\rm LND}(B)$ such that ${\rm rk}~D \leqslant 2$. We can choose a system of variables such that $k[x_3,\ldots,x_n]\subseteq \mathrm{Ker}\,D$. Then we can obtain $D$ from partial derivations by a construction similar to the C-construction.

\smallskip
\noindent
\begin{lem}\label{CLNDC3var1}
Let $R$ be an affine $k$-domain with field of fractions $K$ and $D$ an irreducible $R$-LND of $R[x_1,x_2](=R^{[2]})$. Then there exists a chain of irreducible LNDs of $R[x_1,x_2]$  
$$ \partial_1:=\dfrac{\partial }{\partial x_1}, \partial_2 :=\dfrac{\partial }{\partial x_2}, \partial _3, \dots , \partial_l=D,$$ 
such that for all $i$, $1 \leqslant i \leqslant l$, $\partial_i(R)=0$ and 
$$h_i\partial_i=\sigma \partial_{i-1}+f_i\partial_{i-2}$$ 
for some $\sigma\in {\rm Ker}~\partial_{i-1}$, $f_i,h_i\in R\setminus \{0\}$.
\end{lem}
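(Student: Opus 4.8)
The plan is to reduce to the absolute two-variable case already settled in Proposition~\ref{CLNDC2var} by extending scalars to the field of fractions, and then to descend the resulting chain back to $R[x_1,x_2]$ using the relative commuting-LND relation of Corollary~\ref{corrin}. First I would pass to $K$: the $R$-derivation $D$ induces a $K$-LND $\bar D$ of $K[x_1,x_2]=K^{[2]}$, which I may replace by the (unique up to $K^*$) irreducible $K$-LND with the same kernel. Applying Proposition~\ref{CLNDC2var} over the field $K$ produces a full irreducible C-chain
\[
\bar\partial_1=\tfrac{\partial}{\partial x_1},\ \bar\partial_2=\tfrac{\partial}{\partial x_2},\ \bar\partial_3,\dots,\bar\partial_l=\bar D
\]
of $K$-LNDs in which consecutive members commute and are non-equivalent.

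Next I would normalize each $\bar\partial_i$ to an irreducible $R$-LND. Writing $\bar\partial_i=\bar g_1\tfrac{\partial}{\partial x_1}+\bar g_2\tfrac{\partial}{\partial x_2}$ with $\bar g_j\in K[x_1,x_2]$, I clear a common denominator $a\in R$ so that $a\bar\partial_i$ has coefficients in $R[x_1,x_2]$, and then divide out their greatest common divisor $p$ in the UFD $R[x_1,x_2]$, obtaining an irreducible $R$-LND $\partial_i$ with $a\bar\partial_i=p\,\partial_i$. Since $\bar\partial_i$ is irreducible in $K[x_1,x_2]$, the element $p$ divides $\gcd_K(a\bar g_1,a\bar g_2)=a$, a unit of $K[x_1,x_2]$; hence $p$ is a unit there, so $p\in R\setminus\{0\}$ and $\partial_i=c_i\bar\partial_i$ with $c_i=p/a\in K^*$. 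In particular $\partial_i(R)=0$, and $\partial_1=\tfrac{\partial}{\partial x_1}$, $\partial_2=\tfrac{\partial}{\partial x_2}$. Because the $\partial_i$ differ from the $\bar\partial_i$ only by scalars in $K^*$, which commute through any derivation, consecutive $\partial_{i-1},\partial_i$ remain commuting, and they stay non-equivalent since $\mathrm{Ker}\,\bar\partial_{i-1}\neq\mathrm{Ker}\,\bar\partial_i$.

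With commutativity in hand, the relations come directly from Corollary~\ref{corrin}. For each $i\geqslant 3$ the irreducible $R$-LND $\partial_{i-1}$ and the $R$-LND $\partial_{i-2}$ are commuting and non-equivalent, while $\partial_i$ commutes with $\partial_{i-1}$; Corollary~\ref{corrin}, applied with $\partial_{i-1}$ in the role of $D$, $\partial_{i-2}$ in the role of $E$, and $\partial_i$ in the role of $E'$, yields non-zero $s,r\in R$ and $q\in\mathrm{Ker}\,\partial_{i-1}$ with $s\,\partial_i=r\,\partial_{i-2}+q\,\partial_{i-1}$. Setting $h_i=s$, $f_i=r$, $\sigma=q$ gives exactly the asserted identity $h_i\partial_i=\sigma\partial_{i-1}+f_i\partial_{i-2}$.

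The step I expect to be the main obstacle is pinning down the last member, namely arranging $\partial_l=D$ rather than merely an $R$-LND equivalent to $D$. This is precisely where irreducibility over $R$ must be reconciled with irreducibility over $K$: the construction recovers the irreducible $R$-LND underlying $\bar D$, and one must verify this coincides with $D$. When $R$ is a UFD, as in the intended application with $R=k[x_3,\dots,x_n]$, Gauss's lemma ensures that an $R$-irreducible $D$ remains irreducible over $K$, so $\bar D$ is the induced derivation itself and $\partial_l$ agrees with $D$ up to a unit of $R$, which can be absorbed into the scalars $h_i,f_i$; for a general affine domain $R$ this compatibility is delicate and must be handled with care, since an $R$-irreducible LND need not stay irreducible after base change to $K$.
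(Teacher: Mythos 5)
Your proof follows the paper's argument essentially verbatim: extend $D$ to an irreducible LND of $K[x_1,x_2]$, apply Proposition~\ref{CLNDC2var} there, rescale each member of the resulting full irreducible C-chain by an element of $K^*$ to an irreducible $R$-LND, and then read off the relations from Corollary~\ref{corrin} applied to the commuting triples $\partial_{i-2},\partial_{i-1},\partial_i$. The only difference is that you justify the rescaling and the identification $\partial_l=D$ via a gcd/Gauss argument in ``the UFD $R[x_1,x_2]$'' (valid only when $R$ is a UFD, as in the intended application $R=k[x_3,\dots,x_n]$), whereas the paper simply asserts the existence of the scalars $g_i\in K^*$ and that irreducibility passes between $R[x_1,x_2]$ and $K[x_1,x_2]$; the delicacy you flag for a general affine domain $R$ is present in, but left unaddressed by, the paper's own proof.
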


\begin{proof}
Since $D(R)=0$, it induces an LND $\delta$ of $K[x_1,x_2]$. Since $D$ is irreducible in $R[x_1,x_2]$, so is $\delta$ in $K[x_1,x_2]$. By Proposition \ref{CLNDC2var}, $\delta$ can be constructed by a full irreducible C-chain $\{\delta_i\}_{i=1}^l$ in $K[x_1,x_2]$ starting from $\delta_1=\dfrac{\partial }{\partial x_1}$ and $\delta_2=\dfrac{\partial }{\partial x_2}$. For each $i$, there exists $g_i \in K^*$ such that $\partial_i=g_i\delta_i$ is an irreducible LND of $R[x_1,x_2]$. Note that $\partial_{i-2}$ and $\partial_i$ commutes with~$\partial_{i-1}$. By Corollary~\ref{corrin}, we have $h_i\partial_i=\sigma \partial_{i-1}+f_i\partial_{i-2}$ for some $\sigma\in \mathrm{Ker}\,\partial_{i-1}$, $f_i,h_i\in R\setminus \{0\}$.
\end{proof}

\smallskip
\noindent
We immediately have the following corollary.
\begin{cor}\label{CLNDC3var} Let $D \in {\rm LND}(k[x_1,\ldots,x_n])$ be irreducible and $D(x_3)=\ldots=D(x_n)=0$. Then there exists a chain of irreducible LNDs of 
$k[x_1,\ldots,x_n]$ 
$$ \partial_1:=\dfrac{\partial }{\partial x_1}, \partial_2 :=\dfrac{\partial }{\partial x_2}, \partial _3, \dots , \partial_l=D,$$ 
such that for all $i$, $1 \leqslant i \leqslant l$, we have $\partial_i(x_3)=\ldots=\partial_i(x_n)=0$ and 
$$h_i\partial_i=\sigma \partial_{i-1}+f_i\partial_{i-2}$$ 
for some $\sigma\in \mathrm{Ker}\,\partial_{i-1}$, $f_i,h_i\in k[x_3,\ldots,x_n]\setminus \{0\}$.
\end{cor}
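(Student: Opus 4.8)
The plan is to derive this as a direct specialization of Lemma~\ref{CLNDC3var1}, taking the coefficient ring to be $R=k[x_3,\ldots,x_n]$. With this choice we have $k[x_1,\ldots,x_n]=R[x_1,x_2]=R^{[2]}$, so the two-variable setting of the lemma applies verbatim, and $R$ is an affine $k$-domain (with field of fractions $K=k(x_3,\ldots,x_n)$) as the lemma requires.

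First I would check that $D$ qualifies as an irreducible $R$-LND of $R[x_1,x_2]$. The hypothesis $D(x_3)=\ldots=D(x_n)=0$ together with the Leibniz rule forces $D$ to vanish on every monomial in $x_3,\ldots,x_n$, hence on all of $R$; thus $D$ is an $R$-derivation, and it remains locally nilpotent, so $D$ is an $R$-LND of $R[x_1,x_2]$. Irreducibility transfers without change: being irreducible is the statement that the image $D(B)$ does not lie in a proper principal ideal of $B=k[x_1,\ldots,x_n]$, and this ring is literally $R[x_1,x_2]$, so the notion is insensitive to whether we regard $D$ over $k$ or over $R$.

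With the hypotheses verified, Lemma~\ref{CLNDC3var1} produces the desired chain $\partial_1=\frac{\partial}{\partial x_1},\ \partial_2=\frac{\partial}{\partial x_2},\ \partial_3,\ldots,\partial_l=D$ of irreducible LNDs satisfying $\partial_i(R)=0$ and $h_i\partial_i=\sigma\partial_{i-1}+f_i\partial_{i-2}$ with $\sigma\in{\rm Ker}\,\partial_{i-1}$ and $f_i,h_i\in R\setminus\{0\}$. The only remaining task is to rephrase the conclusion in the language of the corollary: the condition $\partial_i(R)=0$ is, by the same Leibniz argument as above, equivalent to $\partial_i(x_3)=\ldots=\partial_i(x_n)=0$, since $x_3,\ldots,x_n$ generate $R$; and $R\setminus\{0\}$ is exactly $k[x_3,\ldots,x_n]\setminus\{0\}$, which places $f_i,h_i$ in the stated set.

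I do not anticipate a genuine obstacle here, since the statement is flagged as an immediate corollary; the only points requiring care are the two bookkeeping checks just described, namely that vanishing on the generators $x_3,\ldots,x_n$ is equivalent to vanishing on all of $R$, and that irreducibility is insensitive to the choice of base ring. Both are routine consequences of the Leibniz rule and the definition of irreducibility, respectively.
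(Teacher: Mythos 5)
Your proposal is correct and is exactly the paper's argument: the paper proves the corollary by the single line ``set $R=k[x_3,\dots,x_n]$ in Lemma~\ref{CLNDC3var1}.'' Your additional bookkeeping (that vanishing on the generators $x_3,\dots,x_n$ is equivalent to vanishing on $R$, and that irreducibility is unaffected by viewing $B$ as $R[x_1,x_2]$) is accurate and simply makes explicit what the paper leaves implicit.
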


\begin{proof} The proof follows from Lemma \ref{CLNDC3var1} by setting $R=k[x_3, \dots , x_n]$.
\end{proof}

We call this construction a {\it Modified C-construction} (denoted by {\it MC-construction}) and the chain $\partial_1,\ldots, \partial_l$ a {\it full irreducible MC-chain}.
Like in the case of $k^{[2]}$, one can define the level of an irreducible LND $D$ of rank at most $2$, as the length of the shortest full irreducible MC-chain ending at $D$. 
As noted earlier, this level depends on the chosen coordinate system. We denote it by $\mathrm{lev}_{\mathbf{x}}(D)$, where $\mathbf{x}=\{x_1,x_2,\ldots,x_n\}$. 
This motivates us to make the following definition. Suppose $D$ is an LND of $k^{[n]}$ with rank at most $2$. Let $\Gamma_D$ denote the collection of all coordinate systems $\mathbf{y}=\{y_1,\ldots,y_n\}$  of $k^{[n]}$ such that $D(y_i)=0$ for $i\geqslant 3$. We would like  to define the universal level as minimal possible level among all such coordinate systems~$\mathbf{y}$.
 
\begin{defn}
{\em For an irreducible $D \in {\rm LND}(k^{[n]})$ of rank at most $2$, we define the {\it universal level} of $D$, denoted by $\mathrm{ulev}(D)$ as 
$$\mathrm{ulev}(D)=\min_{\mathbf{y} \in \Gamma_D}~\left\{ \mathrm{lev}_{\mathbf{y}}(D)~\vert ~ \mathbf{y} \in \Gamma_D \right\}.$$
}
\end{defn}

\smallskip
\noindent
It easy to see that $\mathrm{ulev}(D)$ does not attain the value $2$. Using universal level, one can characterize rank $1$ LNDs and triangularizable LNDs of $k^{[n]}$. 
\begin{thm}\label{crtr} Let $D \in {\rm LND}(k^{[n]})$ be irreducible of $rank$ at most $2$. Then the following statements hold:
\begin{enumerate}
\item [\rm (i)] $\mathrm{ulev}(D)=1 \Leftrightarrow rk\,D=1$; 
\item [\rm (ii)] $\mathrm{ulev}(D)= 3 \Leftrightarrow D~\text{is triangularizable LND of rank 2}.$
\end{enumerate}
\end{thm}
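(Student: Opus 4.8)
The plan is to reduce everything to explicit normal forms obtained by unwinding the defining recurrence $h_i\partial_i=\sigma\partial_{i-1}+f_i\partial_{i-2}$ of a full irreducible MC-chain $\partial_1=\partial/\partial x_1,\ \partial_2=\partial/\partial x_2,\ \partial_3,\dots$ in a coordinate system $\mathbf{x}$. First I would record that $\mathrm{lev}_{\mathbf{x}}(D)=1$ means $D$ is a nonzero scalar multiple of $\partial/\partial x_1$, that $\mathrm{lev}_{\mathbf{x}}(D)=2$ means $D$ is a scalar multiple of $\partial/\partial x_2$, and that $\mathrm{lev}_{\mathbf{x}}(D)=3$ means $\partial_3=D$ with $h_3D=\sigma\partial_2+f_3\partial_1$, where $\sigma\in\mathrm{Ker}\,\partial_2=k[x_1,x_3,\dots,x_n]$ and $f_3,h_3\in k[x_3,\dots,x_n]$. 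Evaluating on the variables, the level-$3$ condition becomes the normal form
\[
D(x_1)\in k[x_3,\dots,x_n],\quad D(x_2)\in k[x_1,x_3,\dots,x_n],\quad D(x_j)=0\ (3\leqslant j\leqslant n).
\]
From the level-$2$ description it is immediate that $\mathrm{ulev}(D)$ never equals $2$: swapping $x_1$ and $x_2$ turns a level-$2$ system into a system of $\Gamma_D$ of level $1$.

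Next I would settle (i) and the forward implication of (ii). If $\mathrm{rk}\,D=1$, Proposition~\ref{fr95c} gives $D=h\,\partial/\partial x_n$ with $h\in k[x_1,\dots,x_{n-1}]$, and irreducibility forces $h\in k^{*}$; after reindexing and rescaling the variable this is level $1$, so $\mathrm{ulev}(D)=1$. Conversely, a system of level $1$ yields $D=c\,\partial/\partial y_1$, hence $k[y_2,\dots,y_n]\subseteq\mathrm{Ker}\,D$ and $\mathrm{rk}\,D=1$, proving (i). For the implication $\mathrm{ulev}(D)=3\Rightarrow D$ triangularizable of rank $2$, a level-$3$ system satisfies the displayed normal form, and reordering the variables as $(x_3,\dots,x_n,x_1,x_2)$ exhibits $D$ as triangular; thus $D$ is triangularizable, and since $\mathrm{ulev}(D)=3\neq1$, part (i) forces $\mathrm{rk}\,D\neq1$, so $\mathrm{rk}\,D=2$.

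The core of the argument is the reverse implication of (ii): if $D$ is triangularizable of rank $2$ then $\mathrm{ulev}(D)=3$. Since $\mathrm{rk}\,D=2$, part (i) and the above give $\mathrm{ulev}(D)\neq1,2$, so it suffices to produce one coordinate system realizing the level-$3$ normal form. After conjugating (both $\mathrm{ulev}$ and triangularizability are invariants of the conjugacy class) I may assume $D$ is triangular, with $D(y_1)=0$ (the value $D(y_1)$ cannot be a nonzero constant, else $y_1$ is a slice and $\mathrm{rk}\,D=1$) and $D(y_i)\in k[y_1,\dots,y_{i-1}]$. Choosing a rank-$2$ coordinate system $\{w_1,w_2,w_3,\dots,w_n\}$ and setting $R:=k[w_3,\dots,w_n]\subseteq\mathrm{Ker}\,D$, the derivation $D$ becomes an irreducible $R$-LND of $R[w_1,w_2]=R^{[2]}$; Theorem~\ref{df} then gives $\mathrm{Ker}\,D=R[F]\cong k^{[n-1]}$ and $D=\mathrm{Jac}_R(F,\cdot)$, the unit coefficient being absorbed by irreducibility. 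The triangular form furnishes a local slice $r$, namely the first moved variable $y_m$, whose image $D(r)$ lies in $k[y_1,\dots,y_{m-1}]$, a polynomial algebra in variables of $k^{[n]}$ belonging to $\mathrm{Ker}\,D$; here $m\leqslant n-1$, since otherwise $D$ would move a single variable and be reducible or of rank $1$. Using Lemma~\ref{dtriang} I would regard $y_1,\dots,y_{m-1}$ as variables of $\mathrm{Ker}\,D\cong k^{[n-1]}$, aim to complete them to kernel variables $z_3,\dots,z_n$ of $k^{[n]}$, set $z_1=r$, and select $z_2$ so that $D(z_1)\in k[z_3,\dots,z_n]$ and $D(z_2)\in k[z_1,z_3,\dots,z_n]$, which is exactly the level-$3$ normal form.

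The hard part will be precisely this completion: reconciling the triangular coordinates, in which $D$ may move many of the $y_i$, with the rank-$2$ coordinates, in which only two variables move, into a single system exhibiting the level-$3$ normal form. Concretely, one must show that the plinth ideal $\mathrm{pl}(D)=\mathrm{Ker}\,D\cap DB$ is generated by an element lying in $k[z_3,\dots,z_n]$ for kernel variables $z_3,\dots,z_n$ that are simultaneously variables of $k^{[n]}$, so that the local slice can be taken with $D$-image free of the remaining generator $F$; note that if $F$ were itself a variable over $R$ then $D$ would have rank $1$, so the rank-$2$ hypothesis is exactly what must be combined with triangularity. For $n=3$ this is immediate, as reordering the triangular coordinates $(y_1,y_2,y_3)$ as $(y_2,y_3,y_1)$ already produces the normal form; the genuine difficulty is $n\geqslant4$, where I expect to argue by induction on $n$, peeling off a kernel variable $w_n$ and using the $R$-LND description together with Lemma~\ref{dtriang} and Theorem~\ref{df} to show that the extra moved variables collapse, forcing $\mathrm{ulev}(D)=3$.
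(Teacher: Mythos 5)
Your handling of part (i), of the forward implication of (ii), and of the observation that the universal level never takes the value $2$ is essentially identical to the paper's: unwind the MC-recurrence at levels $1$ and $3$, note that $f,h\in k[y_3,\dots,y_n]$ and $\sigma\in k[y_1,y_3,\dots,y_n]$ force $D(y_1)\in k[y_3,\dots,y_n]$ and $D(y_2)\in k[y_1,y_3,\dots,y_n]$ (the quotients by $h$ land in the right subrings because, e.g., $k(y_3,\dots,y_n)\cap k[y_1,\dots,y_n]=k[y_3,\dots,y_n]$), reorder the variables to exhibit triangularity, and deduce rank $2$ from part (i). Up to the harmless swap of $y_1$ and $y_2$, this is the paper's argument.

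The problem is the reverse implication of (ii), which you yourself flag as ``the hard part'' and do not complete. To get $\mathrm{ulev}(D)\leqslant 3$ one must exhibit a single coordinate system $\{y_1,\dots,y_n\}$ with $D(y_3)=\dots=D(y_n)=0$, $D(y_2)\in k[y_3,\dots,y_n]$ and $D(y_1)\in k[y_2,\dots,y_n]$; the hypotheses only give a triangularizing system and a (possibly entirely different) rank-$2$ system, and merging them is the whole content of the implication for $n\geqslant 4$. Your proposal ends with ``I expect to argue by induction on $n$'' without setting up the induction, and the intermediate steps you do sketch are shaky: Lemma \ref{dtriang} is stated only for $k^{[3]}$, so it cannot be used to transfer variables between $B$ and $\mathrm{Ker}\,D$ when $n\geqslant 4$; and the unmoved triangular variables $y_1,\dots,y_{m-1}$ may number fewer than $n-2$, so the required completion to kernel variables $z_3,\dots,z_n$ that are simultaneously variables of $k^{[n]}$ is a genuine construction, not a bookkeeping step. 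As written, the proposal therefore does not prove the implication ``$D$ triangularizable of rank $2$ implies $\mathrm{ulev}(D)=3$'' beyond $n=3$. To be fair to you, the paper is thin at exactly the same spot: it simply asserts, without argument, that a triangularizable rank-$2$ derivation admits a coordinate system of the displayed form and then reads off the MC-chain $\partial/\partial y_2,\ \partial/\partial y_1,\ D$. For $n=3$ (the only case used later in the paper) both your argument and the paper's are fine, since a triangular $D$ with $\mathrm{rk}\,D=2$ must kill $x_1$ and reversing the order of the three variables gives the normal form at once; for $n\geqslant 4$ you have correctly located a step that still needs a proof.
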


\begin{proof} (i) Suppose $\mathrm{ulev}(D) =1$. Then there exists a coordinate system $\{y_1,\ldots,y_n\}$ of $k^{[n]}$ such that $D=\dfrac{\partial }{\partial y_1}$. Hence, $\mathrm{rk}\,D=1$. Conversely, since $D$ is irreducible of rank $1$, there exists a coordinate system of $k^{[n]}$ in which $D$ is a partial derivation. Hence, $\mathrm{ulev}(D)=1$.

\smallskip
\noindent
(ii) The condition $\mathrm{ulev}(D)=3$ is equivalent to the fact that there exists a coordinate system $\{y_1,\ldots,y_n\}$ of~$k^{[n]}$ such that 
$$h(y_3,\ldots,y_n)D=f(y_3,\ldots,y_n)\dfrac{\partial }{\partial y_1}+\sigma(y_1,y_3,y_4,\ldots,y_n)\dfrac{\partial }{\partial y_2},$$
for some $f \in k[y_3, \ldots , y_n]$ and $\sigma \in {\rm Ker}~\dfrac{\partial}{\partial x_2}=k[y_1, y_3, \ldots ,y_n]$. 
Swapping $y_1$ and $y_2$ we obtain
$$h(y_3,\ldots,y_n)D=f(y_3,\ldots,y_n)\dfrac{\partial }{\partial y_2}+\sigma(y_2,y_3,y_4,\ldots,y_n)\dfrac{\partial }{\partial y_1}.$$ 
So, $D(y_1)=  \dfrac{\sigma(y_2,\ldots,y_n)}{h(y_3,\ldots,y_n)}$ and $D(y_2)= \dfrac{f(y_3,\ldots,y_n)}{h(y_3,\ldots,y_n)}$. Since $D(y_1), D(y_2) \in k^{[n]}$, it follows that $D(y_1) \in k[y_2, \ldots , y_n]$ and $D(y_2) \in k[y_3, \ldots ,y_n]$. Clearly, \mbox{$D(y_3)=\ldots=D(y_n)=0$.} This implies triangularizability of $D$ and since ${\rm ulev}(D) \neq 1$, we must have ${\rm rk}~D=2$ by part (i).

\smallskip
\noindent
Conversely, if $D$ is triangularizable of rank $2$, then in some coordinate system $\{y_1,\ldots,y_n\}$, we have 
$$D(y_3)=\ldots=D(y_n)=0,\qquad D(y_2)=f(y_3,\ldots,y_n),\qquad D(y_1)=h(y_2,\ldots,y_n), $$ for suitable non-zero $f$ and $h$.  Then
$$
D=h(y_2,\ldots,y_n)\frac{\partial}{\partial y_1}+f(y_3,\ldots,y_n)\frac{\partial}{\partial y_2}.
$$
So $\frac{\partial}{\partial y_2}, \frac{\partial}{\partial y_1}, D$ is an irreducible MC-chain. Hence $\mathrm{ulev}(D)\leqslant 3$. But by (i), $\mathrm{ulev}(D)\neq 1$. Since universal level never equals 2, we have $\mathrm{ulev}(D)=3$.
\end{proof}

\begin{rem}\label{ulev}
{\em Since there are irreducible LNDs on $k^{[3]}$ with universal level $1$ or $3$, it is natural to ask whether for an irreducible $D \in {\rm LND}(k^{[3]})$, can ${\rm ulev}(D) > 3 ?$ In Section \ref{rk2sec}, we give an example of an LND on $k[x,y,z]$ with universal level $4$ (see Example \ref{ex1}). But we don't have any example of an LND for which we can prove that the universal level is $5$ or more. So we end this section with the following question.
}
\end{rem}

\begin{que}\label{qu} {\em For an irreducible $D \in {\rm LND}(k^{[n]})$, $n\geqslant 3$, can ${\rm ulev}(D) > 4$?}
\end{que}
\noindent
We have an explicit candidate of an LND $D$ of $k^{[3]}$ with $\mathrm{ulev}(D)=5$, see Remark~\ref{varema} and Question~\ref{qq}. 
If we are given an LND $D$ of $k[x_1,x_2,\ldots,x_n]$ with $D(x_3)=\ldots=D(x_n)=0$, we can consider its extension $\delta$ to $k(x_3,\ldots, x_n)[x_1,x_2]$ and  take $\alpha\in \Psi(\delta)$. This automorphism $\alpha$ can be algorithmically decomposed onto a composition of triangular and affine ones. 
This gives us an algorithm for computing $\mathrm{lev}_{(x_1,\ldots, x_n)}D$. This leads to the following question.
\begin{que}\label{ququ}
Is there an algorithm to compute the universal level of a given irreducible LND of $k^{[n]}$ of rank at most $2$, where $n \geqslant 3$? 
\end{que}

\section{Rank $\mathbf{2}$ derivations}\label{rk2sec}
In this section, we study locally nilpotent derivations of $B=k^{[3]}$. We first give a necessary and sufficient condition for a locally nilpotent derivation to have a non-equivalent commuting locally nilpotent derivation. 

\begin{prop}\label{rk2com1}
Let $B=k^{[3]}$ and $D (\neq 0) \in {\rm LND}(B)$. Then there exists $E (\neq 0) \in {\rm LND}(B)$ such that $DE=ED$ and ${\rm Ker}~D \neq {\rm Ker}~E$ if and only if there exists $g \in B$ such that $D(g)=f(x) \in k[x]$ and $x$ is a variable of ${\rm Ker}~D$.
\end{prop}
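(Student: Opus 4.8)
The plan is to prove both implications, using as a bridge the common variable furnished by Proposition~\ref{frcom} together with the principal plinth ideal of Theorem~\ref{pl}. Throughout, write $A=\mathrm{Ker}\,D$, which is isomorphic to $k^{[2]}$ by Theorem~\ref{miya}. In the ``only if'' direction the point is to locate the variable inside $A$ and then show that the $D$-image of a suitable local slice is forced into $k[\text{that variable}]$; in the ``if'' direction the point is to manufacture $E$ as a relative Jacobian derivation and verify local nilpotence by passing to the fraction field $k(x)$, where $D$ acquires an honest slice.

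For the forward implication, suppose $E\neq0$ commutes with $D$ and $\mathrm{Ker}\,E\neq\mathrm{Ker}\,D$. Since $DE=ED$, the derivation $E$ sends $A$ into itself, so it restricts to a locally nilpotent derivation $E_A$ of $A$. By Proposition~\ref{frcom} there is a variable $t$ of $B$ with $\mathrm{Ker}\,D\cap\mathrm{Ker}\,E=k[t]$, whence $\mathrm{Ker}\,E_A=A\cap\mathrm{Ker}\,E=k[t]$; this has transcendence degree $1$, so $E_A\neq0$ and, by Theorem~\ref{R}, $t$ is a variable of $A$. Now take a minimal local slice $r$ of $D$, so that $p:=D(r)$ generates the nonzero principal plinth ideal $A\cap DB$ (Theorem~\ref{pl}). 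The key computation is
$$E_A(p)=E\bigl(D(r)\bigr)=D\bigl(E(r)\bigr)\in A\cap DB=(p),$$
so $E_A(p)=cp$ for some $c\in A$. If $E_A(p)\neq0$ the degree properties of $E_A$ give $\deg_{E_A}(p)-1=\deg_{E_A}\!\bigl(E_A(p)\bigr)=\deg_{E_A}(c)+\deg_{E_A}(p)\geqslant\deg_{E_A}(p)$, a contradiction; hence $E_A(p)=0$, i.e. $p\in\mathrm{Ker}\,E_A=k[t]$. Renaming $t$ as $x$, $p$ as $f(x)$ and $r$ as $g$ yields the asserted condition.

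For the converse, assume $D(g)=f(x)\in k[x]$ with $f(x)\neq0$ (the case $f(x)=0$ being vacuous) and that $x$ is a variable of $A$. First I would invoke Lemma~\ref{dtriang}(ii) to upgrade $x$ to a variable of $B$, write $B=k[x][u,v]=R^{[2]}$ with $R=k[x]$, and view $D$ as an $R$-LND (an $R$-derivation since $x\in A$). Writing $A=k[x,F]$ with $F\in B$ and localizing at $S=k[x]\setminus\{0\}$, the induced derivation $D_K$ on $B_K:=S^{-1}B=K[u,v]$ ($K=k(x)$) has kernel $K[F]$, so $F$ is a variable of $K[u,v]$. Because $f(x)\in K^{*}$, the element $s:=g/f(x)$ satisfies $D_K(s)=1$; by the Slice Theorem~\ref{st}, $B_K=K[F,s]=K^{[2]}$ and $D_K=\partial/\partial s$ in these coordinates. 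Now set $E:=\mathrm{Jac}_{u,v}(g,\cdot)$, a genuine $R$-derivation of $B$ with coefficients in $B$. Since $g=f(x)s$, the chain rule for Jacobians gives $E=-f(x)\,\mathrm{Jac}_{u,v}(F,s)\,\partial/\partial F$ on $B_K$, that is $E_K=c\,\partial/\partial F$ with $c\in K^{*}$. Thus $E_K$ is an LND of $B_K$ commuting with $D_K=\partial/\partial s$; as $E$ maps $B$ into $B$ and agrees there with $E_K$, both local nilpotence and commutativity descend to $B$, so $E\in\mathrm{LND}(B)$ and $DE=ED$. Finally $E(F)=c\neq0$ shows $F\in\mathrm{Ker}\,D\setminus\mathrm{Ker}\,E$, so $\mathrm{Ker}\,E\neq\mathrm{Ker}\,D$, while $E\neq0$ because $g\notin k[x]$.

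The two delicate points are these. In the forward direction the crux is the degree estimate pinning the plinth generator $p$ into $k[t]$; this is precisely where commutativity is used, via the facts that $E_A$ preserves the principal plinth ideal and that a minimal local slice forces $E_A$-degree zero on its $D$-image. In the converse, the subtle point is that $\mathrm{Jac}_{u,v}(g,\cdot)$ must be shown locally nilpotent on $B$ itself, not merely on the localization $B_K$; the resolution is that inverting $f(x)$ turns $g$ into an honest slice of $D$, making the commuting partner $\partial/\partial F$ visible over $K$, while $\mathrm{Jac}_{u,v}(g,\cdot)$ already has all its coefficients in $B$, so its iterates on any element of $B$ coincide with those of the locally nilpotent $E_K$.
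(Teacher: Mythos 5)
Your proof is correct, and in the ``only if'' direction it takes a genuinely different route from the paper's. The paper argues symmetrically the other way: since $DE=ED$ and the kernels differ, $D$ restricts to a non-zero LND $\delta$ of $\mathrm{Ker}\,E$ with $\mathrm{Ker}\,\delta=\mathrm{Ker}\,D\cap\mathrm{Ker}\,E=k[x]$, and then any local slice $g$ of $\delta$ immediately satisfies $D(g)=\delta(g)\in k[x]$ (with $x$ a variable of $\mathrm{Ker}\,D$ by Lemma~\ref{dtriang}(i) rather than via Theorem~\ref{R}). That is shorter than your argument, but yours buys a little more: by showing that $E$ annihilates the generator of the plinth ideal, you prove that the condition is witnessed by a \emph{minimal} local slice of $D$, which is exactly the sharper form the authors attribute to \cite{KH1} in the remark following the proposition, and which is what feeds into Remark~\ref{nepsam} later. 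Your degree estimate forcing $E_A(p)=0$ from $E_A(p)\in(p)$ is sound. In the converse direction you construct the same derivation $E=\mathrm{Jac}(x,g,\cdot)$ as the paper; the only difference is in how local nilpotence and commutativity are verified --- the paper checks $E(x)=E(g)=E^2(p)=0$ and commutation on the generators of $B[1/f]=k[x,p,g,1/f]$, while you pass to $K=k(x)$, invoke the Slice Theorem to get $B_K=K[F,s]$, and read off $E_K=c\,\partial/\partial F$ with $c\in K^*$ from the Jacobian chain rule; both verifications rest on the same localization $B\subseteq B[1/f]$. One cosmetic point: the hypothesis must be read with $f(x)\neq0$ (as you note), since otherwise the right-hand condition is satisfied by every non-zero $D$ while the left-hand one is not (e.g.\ $xy\,\partial/\partial z$); the paper makes the same implicit assumption.
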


\begin{proof} Suppose $DE=ED$ and ${\rm Ker}~D \neq {\rm Ker}~E$. Then, by Proposition \ref{frcom}, there exists a variable $x$ of $B$ such that ${\rm Ker}~D \cap {\rm Ker}~E=k[x]$. By Lemma \ref{dtriang}(i), $x$ is a variable of ${\rm Ker}~D$. Since $DE=ED$ and ${\rm Ker}~D \neq {\rm Ker}~E$, $D$ restricts to $\delta (\neq 0) \in {\rm LND}({\rm Ker}~E)$. Since~$\delta$ is non-zero, it has a local slice $g$. Then $D(g)=\delta (g) \in {\rm Ker}~\delta=k[x]$.

\smallskip
\noindent
Conversely, suppose that there exists $g \in B$ such that $D(g)=f(x) \in k[x]$ and $x$ is a variable in ${\rm Ker}~D$. Then, by Lemma \ref{dtriang}(ii), $x$ is a variable in $B$ such that $D(x)=0$. Let $B=k[x,y,z]$ and consider the derivation $$E:={\rm Jac}(x,g,\cdot)=\dfrac{\partial g}{\partial y}\dfrac{\partial}{\partial z}-\dfrac{\partial g}{\partial z}\dfrac{\partial}{\partial y}.$$
Let ${\rm Ker}~D=k[x,p]$ for some $p \in B$. By Theorem \ref{df}, we have
$$D=h~{\rm Jac}(x,p,.)=h\left(\dfrac{\partial p}{\partial y}\dfrac{\partial}{\partial z}-\dfrac{\partial p}{\partial z}\dfrac{\partial}{\partial y}\right),~\text{where}~h \in k[x,p]\setminus \{0\}.$$ Since $D(g)=f(x) \in k[x]$, we have $h \mid f(x)$. So we also have $h=h(x) \in k[x]$. Note that  
$$h(x)E(p)=h(x)\left(\dfrac{\partial g}{\partial y}\dfrac{\partial p}{\partial z}-\dfrac{\partial g}{\partial z}\dfrac{\partial p}{\partial y}\right)=-D(g)=-f(x).$$ Hence $E^2(p)=0$. Since $g$ is a local slice of $D$, we have $k[x,y,z,\frac{1}{f(x)}]=k[x,p,\frac{1}{f(x)}][g]$. So any element $s \in B$ can be represented as 
$$s=\dfrac{f_s(x,p,g)}{f^{m_s}},~\text{where}~f_s \in k[x,p,g]~\text{and}~m_s \in \bN \cup \{0\}.$$
Since $E(x)=E(g)=E^2(p)=0$, we obtain $E \in {\rm LND}(B)$. Now we have 
$$ED(x)=DE(x)=0, \qquad ED(g)=E(f(x))=0=DE(g),$$
$$h(x)DE(p)=D(h(x)E(p))=D(-f(x))=0=h(x)ED(p).$$ Since $D$ and $E$ commute on $k[x,p,g]$ and $D(x)=E(x)=0$, they also commute on $B$ because of $B[\frac{1}{f(x)}]=k[x,p,g,\frac{1}{f(x)}].$
\end{proof}

\smallskip
\noindent
\begin{rem}{\em The above proof is similar to the proof given in \cite[Lemma 4.1]{KH1}. The authors prove the existence of a locally nilpotent derivation commuting with $D$, under the hypothesis that $D$ is {\it irreducible of $rank~2$}. The condition that ``{\it there exists $g \in B$ such that $D(g)=f(x) \in k[x]$ and $x$ is a variable in ${\rm Ker}\,D$}'' is automatically satisfied if $g$ is a minimal local slice of $D$ \cite[Lemma 2.5, Theorem 3.1]{KH1} under the assumption that $\rm{rk}~D \neq 1$. Our result holds true for any locally nilpotent derivation of $B$ of $\rm{rank}$ at most $2$ without any assumption on irreducibility.
}
\end{rem}

\smallskip
\noindent
We next claim an elementary lemma.
\begin{lem}\label{rk2lem}
Let $B=k^{[3]}$ and $D \in {\rm LND}(B)$ be irreducible of rank at most 2. Then there exists a variable $x$ of $B$ and $g \in B$ such that $D(g)=f(x) \in {\rm Ker}~D$.
\end{lem}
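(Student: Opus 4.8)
The plan is to reduce to the relative (one-fewer-variable) situation and then read off the desired element from the Jacobian description of Daigle--Freudenburg. Since ${\rm rk}\,D\leqslant 2$, the definition of rank yields a coordinate system $\{x,x_1,x_2\}$ of $B$ with $D(x)=0$. I set $R=k[x]$ and $K=k(x)$; then $B=R[x_1,x_2]=R^{[2]}$ and $D$ is a nonzero irreducible $R$-LND of $B$. Here $x$ is a variable of $B$ lying in ${\rm Ker}\,D$, so every $f(x)\in k[x]$ automatically lies in ${\rm Ker}\,D$; hence it suffices to produce $g\in B$ with $D(g)\in k[x]\setminus\{0\}$.

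First I would invoke Theorem~\ref{df} with this $R$: it gives $D=\alpha\,{\rm Jac}(F,\cdot)$ for some $\alpha\in R[F]\setminus\{0\}$ and some $F\in B$ that is a variable of $K[x_1,x_2]$, the Jacobian being taken in the variables $x_1,x_2$. The crucial point is that irreducibility of $D$ forces $\alpha\in k^*$: if $\alpha$ were a non-unit, then since $\alpha\in{\rm Ker}\,D$ the image $D(B)=\alpha\,{\rm Jac}(F,\cdot)(B)$ would be contained in the proper principal ideal $\alpha B$, contradicting irreducibility. Thus $D=c\,{\rm Jac}(F,\cdot)$ with $c\in k^*$.

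Next, as $F$ is a variable of $K[x_1,x_2]$, I complete it to a coordinate system $K[x_1,x_2]=K[F,G]$ with $G\in K[x_1,x_2]$. The Jacobian $\lambda:={\rm Jac}_{x_1,x_2}(F,G)$ is the Jacobian determinant of a $K$-automorphism of $K[x_1,x_2]$, hence a unit, i.e. $\lambda\in K^*$. Extending $D$ to the LND $\delta$ of $K[x_1,x_2]$ gives $\delta(G)=c\,{\rm Jac}(F,G)=c\lambda\in K^*$. Finally I clear denominators: I choose $d(x)\in k[x]\setminus\{0\}$ so that $g:=d(x)G\in B$. Since $d(x)\in R\subseteq{\rm Ker}\,D$,
$$D(g)=d(x)\,\delta(G)=c\,d(x)\,\lambda.$$
On one hand $D(g)\in B$, and on the other $c\,d(x)\,\lambda\in k(x)$; because $B\cap k(x)=k[x]$, it follows that $D(g)=f(x)\in k[x]$, nonzero since $c,d(x),\lambda$ are all nonzero. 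This $x$ and $g$ satisfy the claim.

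The step I expect to require the most care is pinning down the multiplier $\alpha$: one must use irreducibility exactly here, since a reducible example such as $F\,{\rm Jac}(F,\cdot)$ admits no $g$ with $D(g)\in k[x]\setminus\{0\}$, so the hypothesis cannot be dropped. The companion technical point is that the completion's Jacobian $\lambda$ is a unit of $K$, which is what makes $\delta(G)$ a ``constant'' over $K=k(x)$ and thereby lets the value survive the descent from $K[x_1,x_2]$ back into $k[x]$.
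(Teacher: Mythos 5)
Your argument is correct. It shares the paper's overall skeleton --- fix a variable $x\in\mathrm{Ker}\,D$, pass to $K=k(x)$, produce an element of $B$ whose image under $D$ is a nonzero element of $k(x)$, and conclude via $B\cap k(x)=k[x]$ --- but the middle step is carried out with a different key theorem. The paper extends $D$ to an irreducible LND $\tilde D$ of $k(x)[y,z]$ and invokes Rentschler's theorem (Theorem~\ref{R}) to get a slice $s$ of $\tilde D$; writing $s=g/f$ with $g\in B$ and $f\in k[x]\setminus\{0\}$ gives $D(g)=f(x)$ in one line. You instead invoke the Daigle--Freudenburg characterization (Theorem~\ref{df}) over $R=k[x]$, use irreducibility to force the multiplier $\alpha$ into $k^*$, complete $F$ to a coordinate system of $K[x_1,x_2]$, and use that the Jacobian of a $K$-automorphism is a unit before clearing denominators. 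Your route is somewhat longer, but it has the virtue of isolating exactly where irreducibility enters (killing $\alpha$); in the paper's version this is absorbed into the claim that $\tilde D$ is irreducible over $k(x)$, which is what guarantees that the Rentschler normal form is $\partial/\partial v$ rather than $p(u)\,\partial/\partial v$ and hence that a genuine slice exists after localization. Both proofs then finish with the same denominator-clearing step, so the difference is one of bookkeeping rather than substance.
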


\begin{proof} Since $\mathrm{rk}\,D \leqslant 2$, there exists a variable $x$ of $B$ such that $x \in {\rm Ker}~D$. Then $D$ extends to an irreducible locally nilpotent derivation $\tilde{D}$ on $k(x)[y,z]$. By Theorem \ref{R}, $\tilde{D}$ has a slice, i.e., there exist $g \in B$ and $f \in k[x]\setminus \{0\}$ such that $\tilde{D}(\frac{g}{f})=1$. Hence $D(g)=f(x)$.
\end{proof}

\smallskip
\noindent
We immediately have the following corollary.
\begin{cor}\label{var}
Let $B=k^{[3]}$, $D \in {\rm LND}(B)$ be of rank~2 and $A:={\rm Ker}~D$. Let $x \in A$ and  $x^{\prime} \in A$ be any other variable of $B$, then there exist $\lambda \in k^*$ and $\mu \in k$ such that $x^{\prime}=\lambda x + \mu$.
\end{cor}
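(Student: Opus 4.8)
The plan is to reduce everything to the single claim that $x$ and $x'$ are algebraically dependent; once that is known, an affine relation drops out quickly. First I would observe that the statement depends only on $A={\rm Ker}\,D$, and that replacing $D$ by its irreducible factor changes neither $A$ nor $\mathrm{rk}\,D$ (a replica has the same kernel, and the rank is determined by the kernel). So I may assume $D$ is irreducible. By Theorem~\ref{miya}, $A\cong k^{[2]}$, and by Lemma~\ref{dtriang}(i) both $x$ and $x'$ are variables of $A$; I fix complements so that $A=k[x,p]=k[x',p']$.

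The heart of the argument is a plinth computation. By Theorem~\ref{pl}(ii) the plinth ideal $\mathrm{pl}(D)=A\cap DB$ is principal, say $\mathrm{pl}(D)=(\pi)$. Running the construction in the proof of Lemma~\ref{rk2lem} once with the variable $x$ and once with $x'$ produces local slices $g,g'$ with $D(g)=f(x)\in k[x]\setminus\{0\}$ and $D(g')=f'(x')\in k[x']\setminus\{0\}$; both values lie in $A\cap DB=\mathrm{pl}(D)$, so $\pi\cdot a=f(x)$ and $\pi\cdot a'=f'(x')$ for some $a,a'\in A$. Now $k[x]$ is factorially closed in the UFD $A=k[x,p]$ (if a product lies in $k[x]$ its $p$-degrees must both vanish), and likewise $k[x']$ is factorially closed in $A=k[x',p']$. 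Applying factorial closedness to $\pi\cdot a=f(x)$ and to $\pi\cdot a'=f'(x')$ gives $\pi\in k[x]$ and $\pi\in k[x']$, hence $\pi\in k[x]\cap k[x']$.

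I would then run the dichotomy on algebraic dependence of $x,x'$. Suppose they are algebraically independent. Then any nonconstant $\xi\in k[x]\cap k[x']$ would make both $x$ and $x'$ algebraic over $k(\xi)$, forcing $\td_k k(x,x')=\td_k k(\xi)=1$, a contradiction; so $k[x]\cap k[x']=k$ and therefore $\pi$ is a unit. But $\pi$ a unit means $1\in DB$, i.e. $D$ has a slice $s$, so by Theorem~\ref{st} we get $B=A[s]=k[a_1,a_2,s]$ for generators $a_1,a_2$ of $A$; three generators of $B=k^{[3]}$ form a coordinate system, whence $k[a_1,a_2]\subseteq{\rm Ker}\,D$ and $\mathrm{rk}\,D=1$, contradicting $\mathrm{rk}\,D=2$. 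Hence $x$ and $x'$ are algebraically dependent. The hard part is exactly this step: converting the rank-$2$ hypothesis into the statement that the plinth generator is a nonunit while simultaneously locating it in both $k[x]$ and $k[x']$; everything else is bookkeeping.

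Finally, algebraic dependence gives $x'$ algebraic over $k(x)$; since $k(x)$ is algebraically closed in $\mathrm{Frac}(A)=k(x,p)$ and $k(x)\cap k[x,p]=k[x]$, I obtain $x'=R(x)\in k[x]$. Because $x'$ is a variable of $A$, there is $w$ with $A=k[x',w]$, and the Jacobian determinant of $(x',w)$ with respect to $(x,p)$ is a unit of $A$; as $\partial x'/\partial p=0$ this determinant equals $R'(x)\,\partial w/\partial p$, forcing $R'(x)\in k^*$. Thus $R$ is linear and $x'=\lambda x+\mu$ with $\lambda\in k^*$ and $\mu\in k$.
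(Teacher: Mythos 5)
Your proof is correct and follows essentially the same route as the paper: reduce to irreducible $D$, produce local slices $g,g'$ with $D(g)\in k[x]$ and $D(g')\in k[x']$ via Lemma~\ref{rk2lem}, use principality of the plinth ideal to place a common generator in $k[x]\cap k[x']$, rule out the unit case by the rank-$2$ hypothesis, deduce algebraic dependence and hence $x'\in k[x]$, and finish by linearity. The only differences are that you make explicit several steps the paper leaves implicit (factorial closedness of $k[x]$ in $A$, the slice-theorem argument for why the plinth generator is not a unit, and the Jacobian argument for linearity), which is harmless.
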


\begin{proof} Without loss of generality we may assume $D$ to be irreducible. By Lemma \ref{rk2lem}, there exist $g, g^{\prime} \in B$ such that $D(g)=f(x)$ and $D(g^{\prime})=h(x^{\prime})$ for some $f \in k[x]\setminus \{0\}$ and $h \in k[x^{\prime}]\setminus \{0\}$. By Theorem \ref{pl} (ii), there exists $v \in A$ such that $A \cap DB=(v)$. Then $v \mid f$ and $v \mid h$. So there exists $a(x) \in k[x]$ and $b(x^{\prime}) \in k[x^{\prime}]$ such that $v=a(x)=b(x^{\prime})$. Since $\rm{rk}~D \neq 1$, $v \notin k$. So $x$ and $x^{\prime}$ are algebraically dependent over $k$. Since $k[x]$ is algebraically closed in $B$, we have $x^{\prime} \in k[x]$. But $x^{\prime}$ is a variable of $B$ and hence has to be a linear polynomial in $x$. Thus there exist $\lambda \in k^*$ and $\mu \in k$ such that $x^{\prime}=\lambda x + \mu$.
\end{proof}

\smallskip
Let us now describe all LNDs of $B$ commuting with a fixed non-zero irreducible LND $D$.
\begin{thm}\label{rk2thm1} Let $D \in {\rm LND}(B)$ be irreducible. Then the following statements hold:
\begin{enumerate}
\item [\rm (i)] if $\mathrm{rk}\,D=3$, then all LNDs commuting with $D$ are equivalent to $D$;

\item [\rm (ii)] if $\mathrm{rk}\,D=2$, fix such  a variable $x$ of $B$ that $x\in A=\mathrm{Ker}\,D$. Then there exists $E \in {\rm LND}(B)$, non-equivalent to $D$  such that $DE=ED$ and for any $E^{\prime} \in {\rm LND}(B)$ be such that $DE^{\prime}=E^{\prime}D$, there exist $\alpha_1,\alpha_2 \in k[x]\setminus \{0\}$ and $\beta \in {\rm Ker}~D$ such that $$\alpha_1E^{\prime}=\alpha_2E+\beta D.$$ Moreover ${\rm Ker}\,D \cap {\rm Ker}\,E= {\rm Ker}\,D\cap {\rm Ker}\,E^{\prime}=k[x]$;

\item [\rm (iii)] if $\mathrm{rk}\,D=1$,  then for each variable $x$ of $A$ there exists $E \in {\rm LND}(B)$ non-equivalent to $D$ such that $DE=ED$, ${\rm Ker}\,D \cap {\rm Ker}\,E=k[x]$ and there exists a systen of coordinates $\{x,y,z\}$ in $B$ such that $E$ is triangular with respect to this system. If $E^{\prime} \in {\rm LND}(B)$ be such that $DE^{\prime}=E^{\prime}D$ and ${\rm Ker}\,D \cap {\rm Ker}\,E^{\prime}=k[x]$, then $E^{\prime}$ is triangular with respect to $\{x,y,z\}$ and there exist $\alpha_1,\alpha_2, \in k[x]\setminus \{0\}$ and $\beta \in {\rm Ker}~D$ such that $$\alpha_1E^{\prime}=\alpha_2E+\beta D.$$

\end{enumerate}
\end{thm}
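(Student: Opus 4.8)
The plan is to reduce all three parts to the two–variable theory by locating a single variable $x \in \mathrm{Ker}\,D$ that also lies in the kernel of every relevant commuting LND, and then passing to the base ring $R = k[x]$, where Theorem~\ref{R}, Lemma~\ref{dimtwo} and Corollary~\ref{corrin} become available. Part (i) I would prove by contraposition. If some nonzero $E$ commutes with $D$ but $\mathrm{Ker}\,E \neq \mathrm{Ker}\,D$, then Proposition~\ref{frcom} produces a variable $f$ of $B$ with $\mathrm{Ker}\,D \cap \mathrm{Ker}\,E = k[f]$; in particular $f$ is a variable of $B$ lying in $\mathrm{Ker}\,D$. Completing $f$ to a coordinate system then shows $\mathrm{rk}\,D \leqslant 2$, contradicting $\mathrm{rk}\,D = 3$. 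Hence no LND commuting with $D$ can be non-equivalent to it.

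For part (ii) I would first build $E$. Lemma~\ref{rk2lem} gives a variable $x_1 \in A$ of $B$ and $g \in B$ with $D(g) = f(x_1) \in k[x_1]\setminus\{0\}$; since $x$ and $x_1$ are both variables of $B$ lying in $A$, Corollary~\ref{var} yields $x_1 = \lambda x + \mu$, so $D(g) \in k[x]\setminus\{0\}$, while Lemma~\ref{dtriang}(i) makes $x$ a variable of $A$. Proposition~\ref{rk2com1} then supplies a nonzero $E$ with $DE = ED$ and $\mathrm{Ker}\,E \neq \mathrm{Ker}\,D$. Next, for $E$ and for any commuting $E'$ non-equivalent to $D$, Proposition~\ref{frcom} together with Corollary~\ref{var} forces $\mathrm{Ker}\,D \cap \mathrm{Ker}\,E' = k[x]$ (this is the ``moreover'' clause) and in particular $E'(x) = 0$. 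Thus $D$, $E$, $E'$ are all $R$-derivations of $B = R[y,z]$ with $R = k[x]$, the derivation $D$ is an irreducible $R$-LND, and $D, E$ are commuting and non-equivalent. Corollary~\ref{corrin} then gives $s, r \in R\setminus\{0\}$ and $q \in \mathrm{Ker}\,D$ with $sE' = rE + qD$, which is the asserted relation with $\alpha_1 = s$, $\alpha_2 = r$, $\beta = q$. A commuting $E'$ that is equivalent to $D$ instead satisfies a replica relation $\alpha_1 E' = \beta D$.

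In part (iii), Proposition~\ref{fr95c} and irreducibility let me choose coordinates so that $D = \partial/\partial z$ and $A = k[x,y]$, where $\{x,y\}$ is a coordinate system of $A$ containing the prescribed variable $x$ and $\{x,y,z\}$ is a coordinate system of $B$. I take $E = \partial/\partial y$, which commutes with $D$, is non-equivalent to it, is triangular for $\{x,y,z\}$, and satisfies $\mathrm{Ker}\,D \cap \mathrm{Ker}\,E = k[x,y] \cap k[x,z] = k[x]$. For the classification, commutation with $\partial/\partial z$ forces every coefficient of $E'$ into $k[x,y]$, and the hypothesis $\mathrm{Ker}\,D \cap \mathrm{Ker}\,E' = k[x]$ gives $E'(x) = 0$, so $E' = b\,\partial/\partial y + c\,\partial/\partial z$ with $b, c \in k[x,y]$. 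Restricting the LND $E'$ to the invariant subalgebra $A$ produces the LND $b\,\partial/\partial y$ of $k[x,y]$, whence Theorem~\ref{R} forces $b \in k[x]$, and $b \neq 0$ (otherwise the intersection would be $k[x,y]$). This shows at once that $E'$ is triangular for $\{x,y,z\}$ and that $E' = bE + cD$, i.e. the relation with $\alpha_1 = 1$, $\alpha_2 = b \in k[x]\setminus\{0\}$, $\beta = c$.

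The genuine content is concentrated in part (ii), and the main obstacle is justifying that an arbitrary commuting $E'$ really is an $R$-derivation, so that Corollary~\ref{corrin} applies: this rests entirely on the kernel identity $\mathrm{Ker}\,D \cap \mathrm{Ker}\,E' = k[x]$, obtained by normalizing the common variable from Proposition~\ref{frcom} to the fixed $x$ via Corollary~\ref{var}. A secondary point to verify is that $D$ stays irreducible and non-equivalent to $E$ after extending scalars to $K = k(x)$, as Corollary~\ref{corrin} requires internally; non-equivalence over $K$ follows because any equivalence $aD = bE$ over $K[y,z]$ clears denominators to one over $B$, contradicting $\mathrm{Ker}\,D \neq \mathrm{Ker}\,E$.
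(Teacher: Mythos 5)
Your proposal is correct and follows essentially the same route as the paper: part (i) via the variable produced by Proposition~\ref{frcom} (the paper routes this through Proposition~\ref{rk2com1}, whose proof rests on the same fact), and parts (ii)--(iii) by normalizing the common kernel variable to the fixed $x$ via Corollary~\ref{var}, viewing everything as $k[x]$-LNDs, and invoking Corollary~\ref{corrin}. Your explicit coefficient computation in (iii) replaces the paper's appeal to Proposition~\ref{frext} and Corollary~\ref{corrin} but amounts to the same argument, and your added checks (persistence of irreducibility and non-equivalence over $k(x)$, the degenerate case of an $E'$ equivalent to $D$) are details the paper leaves implicit.
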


\begin{proof} Since $D$ is irreducible, if there is a non-equivalent $E$ such that $DE=ED$, then by Proposition~\ref{rk2com1}, we can choose $g \in B$ such that $D(g) \in k[x]$, where $x$ is a variable of $B$ contained in ${\rm Ker}~D$. This proves (i).

If ${\rm rk}~D=2$, then such an $E$ exists by Lemma~\ref{rk2lem} and Proposition~\ref{rk2com1}. By Proposition~\ref{frcom}, $\mathrm{Ker}\,D\cap\mathrm{Ker}\,E=k[f]$, where $f$ is a variable of $B$. By Corollary~\ref{var}, $k[f]=k[x]$. We can say the same about $E^{\prime}$. Therefore, $D,E$ and $E^{\prime}$ are $k[x]$-LNDs of $B$. So, we can apply Corollary~\ref{corrin}, which proves (ii). 

If $\mathrm{rk}\,D=1$, then there a coordinate system $(x,y,z)$ of $B$ such that $D=\frac{\partial}{\partial z}$ and $A=k[x,y]$. Clearly, such an $E$ exists by Lemma~\ref{rk2lem} and Proposition~\ref{rk2com1}. As $ED=DE$, $E$ restricts to an LND on $A$. Since $E(x)=0$, by Proposition \ref{frext}, $E(y) \in k[x]$. Again by Proposition \ref{frext}, $E(z) \in A=k[x,y]$. So $E$ is triangular with respect to the coordinate system $(x,y,z)$. Similar argument shows that $E^{\prime}$ is also triangular and the rest of the proof follows from Corollary \ref{corrin}.

\end{proof}

Let us now apply the theory of Section~\ref{cln} to investigate LNDs of rank $2$ of $B=k^{[3]}$. 

\begin{prop}\label{stper}
Let $D$ be an irreducible LND of $B=k^{[3]}$ of rank at most $2$. Suppose $B=k[x,y,z]$ and $x\in A=\mathrm{Ker}\,D$. Let $\mathbf{x}=\{x,y,z\}$ and let $\Gamma$ denote the collection of all coordinate systems $\mathbf{x^{\prime}}=\{x,y^{\prime}, z^{\prime}\}$, where $y^{\prime}, z^{\prime} \in B$.
Then
$$\mathrm{ulev}(D)=\min_{\mathbf{x^{\prime}} \in \Gamma}~\left\{ \mathrm{lev}_{\mathbf{x^{\prime}}}(D)~\vert ~ \mathbf{x^{\prime}} \in \Gamma \right\}.$$
\end{prop}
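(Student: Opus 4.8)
The plan is to establish the two inequalities between $\mathrm{ulev}(D)$ and $\min_{\mathbf{x'}\in\Gamma}\mathrm{lev}_{\mathbf{x'}}(D)$ separately. The easy direction is ``$\geqslant$'': since $x\in A=\mathrm{Ker}\,D$, in every system $\mathbf{x'}=\{x,y',z'\}\in\Gamma$ it is $x$ that plays the role of the constant (kernel) variable of the MC-construction, so the defining condition $D(x)=0$ of $\Gamma_D$ is met and $\Gamma\subseteq\Gamma_D$. Minimizing a level over a smaller collection of coordinate systems can only give a larger value, so
$$\min_{\mathbf{x'}\in\Gamma}\mathrm{lev}_{\mathbf{x'}}(D)\ \geqslant\ \min_{\mathbf{y}\in\Gamma_D}\mathrm{lev}_{\mathbf{y}}(D)=\mathrm{ulev}(D).$$
The substance of the proposition is therefore the reverse inequality, i.e.\ showing that some minimizing coordinate system may be taken with kernel variable exactly $x$. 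I would split according to $\mathrm{rk}\,D$.

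First I would treat $\mathrm{rk}\,D=2$. Choose $\mathbf{y}=\{y_1,y_2,y_3\}\in\Gamma_D$ with $\mathrm{lev}_{\mathbf{y}}(D)=\mathrm{ulev}(D)$, its kernel variable $y_3$ being a variable of $B$ contained in $A$. Corollary~\ref{var} then forces $y_3=\lambda x+\mu$ with $\lambda\in k^*$, $\mu\in k$, whence $k[y_3]=k[x]$ and $\{y_1,y_2,x\}$ is again a coordinate system, lying in $\Gamma$. Since the level is read off from the C-chain induced over the fraction field of the subring generated by the constant variable, and that subring is literally unchanged, the whole MC-chain for $\{y_1,y_2,x\}$ coincides with that for $\{y_1,y_2,y_3\}$; hence $\mathrm{lev}_{\{y_1,y_2,x\}}(D)=\mathrm{ulev}(D)$, as required.

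Next I would treat $\mathrm{rk}\,D=1$, where Theorem~\ref{crtr}(i) gives $\mathrm{ulev}(D)=1$, so it suffices to build one system in $\Gamma$ of level $1$. As $D$ is irreducible of rank $1$, Proposition~\ref{fr95c} lets me write $D=\frac{\partial}{\partial w}$ after rescaling, with $A=k[u,v]$ and $B=k[u,v,w]$. By Lemma~\ref{dtriang}(i) the fixed $x\in A$ is a variable of $A$, so it completes to $A=k[x,t]$ and then $B=k[x,t,w]$ with $D=\frac{\partial}{\partial w}$. The system $\{w,t,x\}\in\Gamma$ has kernel variable $x$ and presents $D$ as a first partial derivative, so its level is $1=\mathrm{ulev}(D)$.

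The main obstacle is the rank $2$ step: it hinges on the rigidity of the kernel variable furnished by Corollary~\ref{var}, which is precisely what guarantees that every admissible kernel variable is an affine reparametrization of $x$, and hence that the minimizing system can be transported to one constant in $x$ without changing the level. The one routine point I would check carefully is that this transport --- replacing $y_3$ by $x$ when $k[y_3]=k[x]$ --- genuinely leaves the MC-chain, and therefore the level, invariant, which is clear since the construction depends on the constant variable only through the ring $R=k[\text{constant}]$ it generates. The rank $1$ case is comparatively trivial because the universal level is already at its minimum.
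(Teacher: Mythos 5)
Your proposal is correct and follows essentially the same route as the paper: the rank~$2$ case rests on Corollary~\ref{var} forcing any kernel variable to be $\lambda x+\mu$ (so the minimizing system can be transported to one in $\Gamma$ without changing the level), and the rank~$1$ case is handled by exhibiting an explicit level-$1$ system containing $x$. The only cosmetic differences are that you make the two-inequality structure explicit and reach $A=k[x,t]$ via Lemma~\ref{dtriang}(i) where the paper invokes Theorem~\ref{df}; both are immediate.
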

\begin{proof}
Let $x$ be a variable of $B$ contained in $A$. 
By Corollary~\ref{CLNDC3var},~$D$ can be obtained by a full MC-chain $\frac{\partial}{\partial x}, \frac{\partial}{\partial y},\ldots, D$.  By definition, universal level of $D$ is the minimal level of D with respect to a coordinate system $\{x',y',z'\}$ with $x'\in A$. If $\mathrm{rk}\,D=2$, Corollary~\ref{var} implies $x'=\lambda x+\mu$. So, $\{x,y',z'\}$ is also a system of coordinates and since $k(x')=k(x)$, $\mathrm{lev}_{(x',y',z')}(D)=\mathrm{lev}_{(x,y',z')}(D)$. So, to compute $\mathrm{ulev}\,(D)$ we need to take minimal $\mathrm{lev}_{(x,y',z')}(D)$. If $\mathrm{rk}~D=1$ and $D$ is irreducible, then by Proposition~\ref{fr95c}, $D$ has a slice, say $s$. By Theorem \ref{df}, $A=k[x,f]$, for some $f \in B$. Then, by Theorem \ref{st}, $B=k[x,f,s]$ and $D=\dfrac{\partial}{\partial s}$. Therefore, $\mathrm{lev}_{(x,f,s)}(D)=1$. So, in this case too, $\mathrm{ulev}(D)=\min_{\mathbf{x^{\prime}} \in \Pi}~\left\{ \mathrm{lev}_{\mathbf{x^{\prime}}}(D)~\vert ~ \mathbf{x^{\prime}} \in \Gamma \right\}$.
\end{proof}

\smallskip
\indent
As before, let $D \in {\rm LND}(B)$ be irreducible of rank at most $2$. Let us also fix a coordinate system $\{x,y,z\}$ of $B$ such that $D(x)=0$. Let $\alpha$ be a $k(x)$-automorphism of $k(x)[y,z]$. By~$\theta$, we denote the swapping 
$$
\begin{pmatrix}
y\\
z
\end{pmatrix}\mapsto
\begin{pmatrix}
z\\
y
\end{pmatrix}
$$
\begin{defn}
{\em 
Let us define the {\it complexity of $\alpha$}, denoted by $c(\alpha)$ to be the minimal number~$m$ such that there exist $\tau_0,\tau_1\ldots, \tau_m\in \mathrm{T}_2(k(x))$ with the property that 
$$
\tau_m\circ\theta\circ\tau_{m-1}\circ\ldots\circ\theta\circ\tau_{0}\circ\alpha \in \mathrm{Aut}(k[x,y,z]).
$$
}
\end{defn}
\begin{rem}
{\em
The number $m$ in the previous definition exists. Indeed, it follows from Remark \ref{rr11} that there exists $\eta_0,\eta_1\ldots, \eta_N\in \mathrm{T}_2(k(x))$ that
$$
\eta_N\circ\theta\circ\eta_{N-1}\circ\ldots\circ\theta\circ\eta_{0}\circ\alpha =\mathrm{id}.
$$
}
\end{rem}

\smallskip
\noindent
Let us consider the extension of $D$ as an LND of $k(x)[y,z]$, which we also denote by $D$ and take $\Psi(D)$ as in Section~\ref{sectwovar}.
\begin{thm}\label{comulev}
Suppose $D$ is an irreducible LND of rank 2. Then 
$${\rm ulev}(D)=c(\alpha)+2,~~\text{where}~~\alpha\in\Psi(D).$$
\end{thm}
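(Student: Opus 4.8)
The plan is to compute $\mathrm{ulev}(D)$ by passing to the field of fractions $K=k(x)$ of $k[x]$, where the two-variable theory of Section~\ref{sectwovar} applies, and then to match the resulting swapping count with $c(\alpha)$. First I would invoke Proposition~\ref{stper} to write $\mathrm{ulev}(D)=\min_{\mathbf{x'}}\mathrm{lev}_{\mathbf{x'}}(D)$, the minimum taken over all coordinate systems $\mathbf{x'}=\{x,y',z'\}$ of $B$ that keep the fixed kernel variable $x$ (legitimate by Corollary~\ref{var}). Fix one such system. Since $D(x)=0$, $D$ is a $k[x]$-LND of $k[x][y',z']$, and by Lemma~\ref{CLNDC3var1} every full irreducible MC-chain over $R=k[x]$ ending at $D$ arises by clearing denominators from a full irreducible C-chain of the extension $\delta$ of $D$ over $K$, and conversely; hence $\mathrm{lev}_{(x,y',z')}(D)=\mathrm{lev}_{(y',z')}(\delta)$ computed over $K$. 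Because $\mathrm{rk}\,D=2$, the LND $\delta$ is not a partial derivative in any coordinate system $(y',z')$ of $K[y,z]$ — otherwise $D$ would admit an honest slice and have rank $1$ — so $\delta\notin[\partial/\partial y']\cup[\partial/\partial z']$ and Remark~\ref{rr11} gives $\mathrm{lev}_{(y',z')}(\delta)=s+2$, where $s$ is the minimal number of swappings $\theta$ in a decomposition $\tau_m\circ\theta\circ\cdots\circ\theta\circ\tau_0$ of a representative of the association class of $\delta$ relative to $(y',z')$.

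Next I would translate the change of coordinates into group theory. A system $\{x,y',z'\}$ corresponds to $\beta_0\in\mathrm{Aut}(k[x,y,z])$ fixing $x$ with $\beta_0(y)=y'$ and $\beta_0(z)=z'$; conjugating a C-chain by $\beta_0^{-1}$ turns the chain relative to $(y',z')$ into one relative to $(y,z)$ ending at $\beta_0^{-1}D\beta_0$, whose association automorphism over $K$ is $\beta_0^{-1}\alpha$ (here $\alpha\in\Psi(D)$). Writing $s(\gamma)$ for the minimal number of swappings needed to express $\gamma$ as a word in $\mathrm{T}_2(K)$ and $\theta$, this yields
\[
\mathrm{ulev}(D)=\min_{\beta_0}\bigl(s(\beta_0^{-1}\alpha)\bigr)+2,
\]
the minimum running over all polynomial automorphisms $\beta_0$ of $k[x,y,z]$ that fix $x$.

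Finally I would identify this minimum with $c(\alpha)$. A decomposition $\beta_0^{-1}\alpha=\tau_m\circ\theta\circ\cdots\circ\theta\circ\tau_0$ with $m$ swappings is exactly the statement that $\beta_0=\alpha\circ(\tau_m\circ\theta\circ\cdots\circ\theta\circ\tau_0)^{-1}$ lies in $\mathrm{Aut}(k[x,y,z])$; that is, a length-$m$ swapping word carries $\alpha$ into $\mathrm{Aut}(k[x,y,z])$. Comparing with the definition of the complexity $c(\alpha)$, and using that a word and its inverse contain the same number of swappings while $\mathrm{T}_2(K)$, $\theta$ and $\mathrm{Aut}(k[x,y,z])$ are each stable under inversion, one reduces to a single point: the minimal number of swappings needed to push $\alpha$ into $\mathrm{Aut}(k[x,y,z])$ is the same whether the swapping word is applied on the left (as in the definition of $c$) or on the right (as produced by the coordinate change $\beta_0$).

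This left--right symmetry is the step I expect to be the main obstacle, since in a general group the swapping-distance from an element to a subgroup need not agree on the two sides. I would attack it through the amalgamated product $\mathrm{Aut}(K[y,z])=\mathrm{Af}_2(K)\ast_C\mathrm{T}_2(K)$ (Theorem~\ref{jvk} applied over $K$): analysing reduced words there, and the way the honest subgroup $\mathrm{Aut}(k[x,y,z])\cap\mathrm{Aut}_K(K[y,z])$ meets their syllables, I would verify that an optimal word realizing $c(\alpha)$ can be transposed into an optimal coordinate change without increasing the swapping count. Once this is in place, $\min_{\beta_0}s(\beta_0^{-1}\alpha)=c(\alpha)$, and combined with the reduction above the formula $\mathrm{ulev}(D)=c(\alpha)+2$ follows; as a consistency check, the excluded value $c(\alpha)=0$ corresponds precisely to $\alpha\in\mathrm{Aut}(k[x,y,z])$, i.e.\ to rank $1$, in agreement with Theorem~\ref{crtr}.
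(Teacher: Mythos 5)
Your argument is sound, and agrees with the paper's, up to and including the formula $\mathrm{ulev}(D)=\min_{\beta_0}s(\beta_0^{-1}\alpha)+2$, where $\beta_0$ runs over the automorphisms of $k[x,y,z]$ fixing $x$ and products are composites of ring maps (the paper reaches the same point via Proposition~\ref{stper} and Remark~\ref{rr11}). The gap is exactly at the step you single out as the main obstacle, and it cannot be closed as you propose: the left--right symmetry you plan to establish is \emph{false}. Take $D=x\frac{\partial}{\partial y}+2y\frac{\partial}{\partial z}$, which is triangular, irreducible and of rank $2$, so $\mathrm{ulev}(D)=3$ by Theorem~\ref{crtr} and your minimum equals $1$. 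Over $K=k(x)$ one may take $f=z-\frac{y^2}{x}$ and $g=\frac{y}{x}$, so $\alpha$ is the ring map $y\mapsto z-\frac{y^2}{x}$, $z\mapsto \frac{y}{x}$. With the single triangular map $\tau\colon y\mapsto xy$, $z\mapsto z$ and \emph{no swapping at all}, the composite $p\mapsto\tau(\alpha(p))$ is $y\mapsto z-xy^2$, $z\mapsto y$, which is already an automorphism of $k[x,y,z]$ (its inverse is $y\mapsto z$, $z\mapsto y+xz^2$). So on one side of $\alpha$ zero swappings suffice to reach $\mathrm{Aut}(k[x,y,z])$, while on the side produced by your coordinate changes ($p\mapsto\alpha(v(p))$) one swapping is needed, since $\mathrm{ulev}(D)-2=1$. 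The two one-sided swapping-distances to $\mathrm{Aut}(k[x,y,z])$ genuinely differ, so no analysis of reduced words in ${\rm Af}_2(K)\ast_C{\rm T}_2(K)$ can identify them.

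What saves the theorem is not a symmetry lemma but the composition convention, which you have inverted when reading the definition of $c(\alpha)$. The paper composes automorphisms as polynomial maps rather than as ring homomorphisms: in Lemma~\ref{hl} the composite $\beta\circ\alpha$ sends $x$ to $a_{11}f+a_{12}g+b_1=\alpha(\beta(x))$, and in the proof of Corollary~\ref{ustr} the composite $\tau_1\circ\theta\circ\tau_0\circ\alpha$ sends $y$ to $u_1(v_0g+F_0(f))$; that is, the word $\tau_m\circ\theta\circ\cdots\circ\tau_0$ is substituted \emph{into} the pair $(f,g)$. Under this convention the word in the definition of $c(\alpha)$ acts on $\alpha$ on exactly the same side as your $\beta_0^{-1}$ does, and $c(\alpha)=\min_{\beta_0}s(\beta_0^{-1}\alpha)$ follows by unwinding the definitions, using only that inverting a word preserves its number of swappings; this is the content of the paper's (terse) proof, and there is nothing further to verify. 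With your reading of $\circ$, the statement itself would fail on the example above, giving $\mathrm{ulev}(D)=2$, a value the paper excludes. A minor separate point: your justification that $\delta\notin[\partial/\partial y']\cup[\partial/\partial z']$ should say that otherwise $D$ would be a $k[x]$-multiple of a partial derivative and hence of rank $1$, rather than that $D$ would admit a slice.
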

\begin{proof}
 Let us recall that $\alpha\in\Psi(D)$ takes $y$ to a generator of $\mathrm{Ker}\,D$ and $z$ to a slice of $D$. Since the mapping $\Psi$ depends on the coordinate system, let us write $\alpha \in\Psi_{(x,y,z)}(D)$.
If we take another coordinate system $\{x,y',z'\}$, then we can consider the automorphism $\varphi$ of $B$ given by $(x,y',z')\mapsto(x,y,z)$. In this system of variables we also can consider $\Psi_{(x,y',z')}(D)$. It is easy to see that $\alpha\circ \varphi\in \Psi_{(x,y',z')}(D)$. Let us also recall that $\mathrm{lev}_{(x,y,z)}(D)=m+2$, where $m$ is the number of affine automorphisms in the decomposition of $\alpha=\tau_m\circ\gamma_m\ldots\circ\tau_1\circ\gamma_1$, $\tau_i\in \mathrm{T}_2(k(x))$, $\gamma_i\in\mathrm{Af}_2(k(x))$ ($1 \leqslant i \leqslant m$), see~Remark~\ref{rr11}. Equivalently, $\mathrm{lev}_{(x,y,z)}(D)=m+2$, where $m$ is the number of swappings  $\theta$ in the decomposition $\alpha=\tau_m\circ\theta\ldots\circ\tau_1\circ\theta\circ\tau_0$, $\tau_j\in \mathrm{T}_2(k(x))$ ($0 \leqslant j \leqslant m$). By Proposition~\ref{stper}, ${\rm ulev}(D)$ is the minimal $\mathrm{lev}_{(x,y^{\prime},z^{\prime})}(D)$, where $k[x,y,z]=k[x,y^{\prime},z^{\prime}]$. It follows from the definition, that $c(\alpha)=c(\alpha \circ \varphi)$ for any $\varphi \in {\rm Aut}(B)$. Thus if we take $m$ to be the minimal number of swappings $\theta$ in the decomposition of $\alpha \circ  \varphi$ for all $\varphi\in \mathrm{Aut}(B)$, such that $\varphi(x)=x$, then ${\rm ulev}(D)=m+2$. The result follows.
\end{proof}

\begin{rem}\label{comr1}
{\em If $\mathrm{rk}\,D=1$, then $\mathrm{ulev}\,D=1$ and the complexity of $\alpha$ equals zero.
}
\end{rem}

\smallskip
\noindent
The following corollary gives a criterion for an irreducible LND of rank at most $2$ to be triangularizable.
\begin{cor}\label{ustr}
Let $D$ be an irreducible LND of $k[x,y,z]$ such that $D(x)=0$. Consider $\alpha\in\Psi(D)$. Then $$D~\text{ is triangularizable}~\Leftrightarrow ~c (\alpha) \leqslant 1.$$
Moreover, if $D$ is triangularizable, then there exist $u_1,v_0 \in k(x)$, $f,g \in k[x,y,z]$, $F_0 \in k(x)^{[1]}$ and $\zeta \in {\rm Aut}_{k[x]}(k[x,y,z])$ such that 
$$\zeta(y)=u_1(v_0g+F_0(f)).$$ 
\end{cor}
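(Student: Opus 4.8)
The plan is to leverage Theorem~\ref{comulev}, which identifies $\mathrm{ulev}(D)$ with $c(\alpha)+2$ for $\alpha\in\Psi(D)$. Since the universal level never takes the value $2$, we have $c(\alpha)=0$ exactly when $\mathrm{ulev}(D)=1$, i.e. $\mathrm{rk}\,D=1$ (by Theorem~\ref{crtr}(i) and Remark~\ref{comr1}), and $c(\alpha)=1$ exactly when $\mathrm{ulev}(D)=3$, i.e. $D$ is triangularizable of rank $2$ (by Theorem~\ref{crtr}(ii)). So the first equivalence $D\text{ triangularizable}\Leftrightarrow c(\alpha)\leqslant 1$ should follow by combining these two characterizations with Theorem~\ref{comulev}: triangularizability of $D$ (allowing rank $1$, which is vacuously triangular) is equivalent to $\mathrm{ulev}(D)\in\{1,3\}$, equivalently $c(\alpha)\in\{0,1\}$. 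I would state this unwinding carefully, treating the rank $1$ and rank $2$ cases separately so that the ``triangularizable'' hypothesis is matched precisely to the right value of the universal level.

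For the ``Moreover'' part, I would unpack what $c(\alpha)\leqslant 1$ means via the definition of complexity. It says there exist $\tau_0,\tau_1\in\mathrm{T}_2(k(x))$ with $\tau_1\circ\theta\circ\tau_0\circ\alpha\in\mathrm{Aut}(k[x,y,z])$ (the case $c(\alpha)=0$ being the further degeneration where no swapping is needed). Writing $\zeta:=\tau_1\circ\theta\circ\tau_0\circ\alpha$, this $\zeta$ is a genuine $k[x]$-automorphism of $k[x,y,z]$, so $\zeta\in\mathrm{Aut}_{k[x]}(k[x,y,z])$. The next step is to track the image of $y$ under $\zeta$. Recall from the setup of $\Psi$ that $\alpha$ sends $y$ to a generator $f$ of $\mathrm{Ker}\,D$ and $z$ to a slice-type element $g$; more precisely $\alpha=(f,g)$ as a $k(x)$-automorphism with $f\in\mathrm{Ker}\,D$. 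Then $\theta\circ\tau_0\circ\alpha$ sends $y$ to an element of the form $v_0 g+F_0(f)$ for suitable $v_0\in k(x)$ and $F_0\in k(x)^{[1]}$ (the triangular automorphism $\tau_0$ acts on the second coordinate by an affine-in-$g$ shift over $k(x)[f]$, and the swap then moves this into the first slot), and finally the outer $\tau_1$ scales the first coordinate by some $u_1\in k(x)$. This yields exactly $\zeta(y)=u_1\bigl(v_0 g+F_0(f)\bigr)$, as desired.

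The main obstacle I anticipate is the bookkeeping in the ``Moreover'' part: pinning down precisely how the composite $\tau_1\circ\theta\circ\tau_0$ acts on the $y$-coordinate of $\alpha=(f,g)$, and justifying that the only remaining freedom in the first coordinate after the swap is the scalar $u_1\in k(x)$ together with the affine-over-$k(x)[f]$ structure captured by $v_0$ and $F_0$. Here I would write a triangular element of $\mathrm{T}_2(k(x))$ explicitly as $\tau_0\colon(y,z)\mapsto(a y+b,\,c z+P(y))$ with $a,c\in k(x)^*$, $b\in k(x)$, $P\in k(x)[y]$, apply it to $\alpha$, swap, and apply the outer triangular map, carefully reading off the first coordinate of the result. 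The subtlety is ensuring the shift term lands in $F_0(f)$ with $F_0\in k(x)^{[1]}$ rather than a more general polynomial; this is exactly what the minimality $c(\alpha)\leqslant 1$ (few swappings) forces, since after a single swap the first coordinate must already be a $k(x)$-linear combination of $g$ and a polynomial in $f$. I would also note the degenerate case $c(\alpha)=0$ separately, where $\zeta=\tau_1\circ\tau_0\circ\alpha$ and the formula still holds with $v_0=0$, so $\zeta(y)=u_1 F_0(f)$.

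Finally, I would double-check the logical direction of the biconditional: for the ``$\Leftarrow$'' direction, $c(\alpha)\leqslant 1$ gives $\mathrm{ulev}(D)\in\{1,3\}$, and in the rank $2$ case Theorem~\ref{crtr}(ii) delivers triangularizability, while in the rank $1$ case $D$ is automatically triangular; for ``$\Rightarrow$'', a triangularizable $D$ has rank at most $2$ and $\mathrm{ulev}(D)\in\{1,3\}$ (again by Theorem~\ref{crtr}), forcing $c(\alpha)\leqslant 1$ through Theorem~\ref{comulev}. Once the equivalence is established, the explicit form of $\zeta(y)$ is a direct consequence of the $c(\alpha)\leqslant 1$ side, so I would present the equivalence first and then derive the normal form.
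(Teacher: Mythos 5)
Your proposal is correct and follows essentially the same route as the paper: the equivalence is obtained by combining Theorem~\ref{crtr}, Theorem~\ref{comulev} and Remark~\ref{comr1}, and the normal form for $\zeta(y)$ comes from writing out $\tau_1\circ\theta\circ\tau_0\circ\alpha$ explicitly and reading off the first coordinate. One small correction: the fact that the shift term is a univariate polynomial $F_0(f)$ is automatic from the definition of a triangular automorphism of $k(x)[y,z]$ (its second coordinate has the form $v_0z+F_0(y)$ with $F_0\in k(x)[y]$), rather than something forced by the minimality of the number of swappings.
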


\begin{proof} The proof follows from Theorem~\ref{crtr}, Theorem~\ref{comulev} and Remark~\ref{comr1}. More explicitly,~$D$ is triangularizable if and only if there exists a decomposition
$$\varphi=\tau_1\circ\theta\circ\tau_0\circ\alpha, $$ 
where $\varphi \in {\rm Aut}(k[x,y,z])$ and $\tau_0,\tau_1 \in {\rm T}_2(k(x))$.
Denote $\alpha(y)=f(y,z)$, $\alpha(z)=g(y,z)$, where $f,g\in k(x)[y,z]$. Let 
$$\tau_0\colon
\begin{pmatrix}
y\\
z
\end{pmatrix}
\rightarrow
\begin{pmatrix}
u_0y\\
v_0z+F_0(y)
\end{pmatrix}, \qquad
\tau_1\colon
\begin{pmatrix}
y\\
z
\end{pmatrix}
\rightarrow
\begin{pmatrix}
u_1y\\
v_1z+F_1(y)
\end{pmatrix}.
$$
Here $u_i,v_i\in k(x)$, $F_i\in k(x)^{[1]}$ for $i=0,1$.
Then
$$
\tau_1\circ\theta\circ\tau_0\circ\alpha\colon
\begin{pmatrix}
y\\
z
\end{pmatrix}
\rightarrow
\begin{pmatrix}
u_1(v_0g+F_0(f))\\
v_1u_0f+F_1\left(v_0g+F_0(f)\right)
\end{pmatrix}.
$$
If $D$ is triangularizable, then there exists an $k[x]$-automorphism of $k[x,y,z]$ of this form. By suitably choosing $u_1$ and $v_0$, one can further assume $f,g \in k[x,y,z]$ in the expression $u_1(v_0g+F_0(f))$.
\end{proof}

\smallskip
\indent
It turns out that in many cases, the polynomial 
$u_1(v_0g+F_0(f))$ cannot be the image of~$y$ under any $k[x]$-automorphism of $k[x,y,z]$. Let us fix two non-negative integers $p$ and~$q$ not both zero. Let us consider the $\mathbb{Z}$-grading on $k(x)[y,z]$ given by 
$\deg y=p$,  $\deg z=q$. If $h\in k(x)[y,z]$, we denote by $\overline{h}$ the sum of terms with the biggest degree with respect to this grading. Since $\overline{h}$ is homogeneous, by $\deg(\overline{h})$, we mean the degree of any its terms. We denote the sum of linear terms of $h$ by $L(h)$.

\begin{defn}
{\em
Let $h\in k[x,y,z]$. Denote the greatest common divisor of all coefficients in $k[x]$  of this polynomial, considered as polynomial in $y$ and $z$, by $d(h)$. We say that a polynomial $h$ is {\it primitive} if $d(h)=1$. 
}
\end{defn}

\begin{prop}\label{usnap} Let $f,g\in k[x,y,z]$. Suppose, $\gcd(d(L(f)),d(L(g)))\in k[x] \setminus k$. Suppose, $g$ is a  primitive polynomial such that $g(x,0,0)=0$. Assume there exist non-negative integers $p$ and $q$ such that $(p,q) \neq (0,0)$,  $\overline{f}$ is primitive and 
$\deg(\overline{f})>\deg(\overline{g})$. Then there does not exist any $k[x]$-automorphism $\zeta$ of $k[x,y,z]$, such that  $\zeta(y)=u_1(v_0g+F_0(f))$ for any $u_1,v_0\in k(x)$ and $F_0\in k(x)[t]=k(x)^{[1]}$.
\end{prop}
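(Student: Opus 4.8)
The plan is to argue by contradiction: suppose such $\zeta$, $u_1$, $v_0$, $F_0$ exist. Absorbing $u_1$, write $\tilde v=u_1v_0\in k(x)$ and $\tilde F=u_1F_0\in k(x)[t]$, so that the assumption reads $P:=\zeta(y)=\tilde v\,g+\tilde F(f)\in k[x,y,z]$. Record the linear coefficients $L(f)=a_1y+a_2z$, $L(g)=b_1y+b_2z$ with $a_i,b_i\in k[x]$, and set $d=\gcd(a_1,a_2,b_1,b_2)=\gcd\big(d(L(f)),d(L(g))\big)$, a non-unit by hypothesis. The contradiction will come from two ingredients. On the one hand, since $\zeta$ is a $k[x]$-automorphism there is $Q=\zeta(z)\in k[x,y,z]$ with Jacobian $P_yQ_z-P_zQ_y\in k^*$; evaluating this identity at $y=z=0$ gives $\alpha\,Q_z(x,0,0)-\beta\,Q_y(x,0,0)\in k^*$, where $\alpha,\beta\in k[x]$ denote the coefficients of $y$ and $z$ in $L(P)$. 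On the other hand I will show $d\mid\alpha$ and $d\mid\beta$ in $k[x]$. Combining these, $d$ divides a nonzero element of $k^*$, forcing $d\in k^*$ and contradicting the hypothesis.

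The heart of the matter is to show $\tilde v\in k[x]$ and $\tilde F\in k[x][t]$, i.e. that no genuine denominators occur; this is where the grading hypotheses enter. Fixing the $(p,q)$-grading, let $n=\deg_t\tilde F$ and let $\tilde c_0,\dots,\tilde c_n\in k(x)$ be the coefficients of $\tilde F$. For $n\geqslant 1$ we have $n\deg(\overline f)\geqslant\deg(\overline f)>\deg(\overline g)$, so in $P=\tilde v\,g+\sum_i\tilde c_if^i$ there is no cancellation at the top and $\overline P=\tilde c_n\overline f^{\,n}$. Since $\overline f$ is primitive, Gauss's lemma makes $\overline f^{\,n}$ primitive, and comparing the content of $P\in k[x,y,z]$ with that of $\tilde c_n\overline f^{\,n}$ forces $\tilde c_n\in k[x]$. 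Subtracting the polynomial $\tilde c_nf^n$ and repeating the argument by downward induction yields $\tilde c_i\in k[x]$ for all $i\geqslant 1$, leaving $\tilde v\,g+\tilde c_0\in k[x,y,z]$. Now $g$ is primitive with $g(x,0,0)=0$: writing $\tilde v=A/B$ in lowest terms, $B$ must divide every $k[x]$-coefficient of $g$, hence $B\mid d(g)=1$, so $\tilde v\in k[x]$ and then $\tilde c_0\in k[x]$. (The degenerate cases $\tilde F=0$, $\tilde v=0$, or $n=0$ reduce to the last step directly.)

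With denominators eliminated I would then read off the linear part. Using $g(x,0,0)=0$ and $f_0:=f(x,0,0)$, one has $L(\tilde F(f))=\tilde F'(f_0)L(f)$ and $L(\tilde v g)=\tilde v\,L(g)$, whence
\[
\alpha=\tilde v\,b_1+\tilde F'(f_0)\,a_1,\qquad \beta=\tilde v\,b_2+\tilde F'(f_0)\,a_2 .
\]
Since now $\tilde v\in k[x]$ and $\tilde F'(f_0)\in k[x]$, and $d$ divides each of $a_1,a_2,b_1,b_2$, we obtain $d\mid\alpha$ and $d\mid\beta$, which is exactly the divisibility needed to close the argument from the first paragraph.

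The main obstacle is precisely the denominator elimination carried out in the second paragraph: a priori $\tilde v$ and the $\tilde c_i$ live only in $k(x)$, and if they had poles the divisibility $d\mid\alpha,\beta$ could break down, so the Jacobian contradiction would not go through. This is why every grading hypothesis is used: the inequality $\deg(\overline f)>\deg(\overline g)$ is what prevents the $\tilde F(f)$-tower from interfering with $\tilde v g$ during the peeling, while primitivity of $\overline f$ (through its powers) and of $g$ is what converts each top-form comparison into an integrality statement over $k[x]$. I expect the only delicate bookkeeping to be the downward induction and the degenerate low-degree cases; the underlying mechanism — variable $\Rightarrow$ unit Jacobian at the origin $\Rightarrow$ unit content of the linear part, contradicted by the common factor $d$ — is robust.
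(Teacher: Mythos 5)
Your proposal is correct and follows essentially the same route as the paper: first eliminate denominators (showing $u_1F_0$ has coefficients in $k[x]$ via the top-degree comparison with the primitive $\overline{f}^{\,n}$, then $u_1v_0\in k[x]$ via primitivity of $g$ and $g(x,0,0)=0$), and then derive a contradiction from the fact that $\gcd(d(L(f)),d(L(g)))$ divides the content of $L(\zeta(y))$, which is incompatible with the Jacobian of $\zeta$ being a unit. Your write-up is in places more careful than the paper's (the explicit downward induction, the handling of the constant term $\tilde c_0$, and the exact formula $L(\tilde F(f))=\tilde F'(f_0)L(f)$), but the underlying argument is the same.
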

\begin{proof} Suppose that, $\zeta$ is a $k[x]$-automorphism of $k[x,y,z]$, such that  $\zeta(y)=u_1(v_0g+F_0(f))$ for some $u_1,v_0\in k(x)$ and $F_0\in k(x)[t]$. Denote
$$u_1F_0[t]=a_nt^n+\ldots +a_0, \qquad \text{where}~a_i\in k(x), ~0 \leqslant i \leqslant n.$$
Let us show that $a_i\in k[x]$ for all $i\geqslant 0$. If it's not the case, we can define $$m :=max ~\{i ~\vert ~ a_i \notin k[x]\}.$$ Then terms of degree $m\deg(\overline{f})$ in $\zeta(y)$ coincide with terms of degree $m\deg(\overline{f})$ in $u_1F_0(f)$. Since for all $j > m$, $a_j \in k[x]$ and $\overline{f}$ is primitive, sum of terms of degree $m\deg(\overline{f})$ in $u_1F_0(f)$ does not lie in $k[x,y,z]$. Therefore, all $a_i$ are in $k[x]$. So  $u_1v_0g$ in $k[x,y,z]$. Since $g$ is primitive, $u_1v_0\in k[x]$. Hence, $\gcd(d(L(a_1f)),d(L(u_1v_0g)))\notin k$. As a result, the ideal $$\Big (\dfrac{\partial \zeta(y)}{\partial y}, \dfrac{\partial \zeta(y)}{\partial z} \Big) \triangleleft k[x,y,z]$$
is proper. So we have a contradiction.
\end{proof}

\smallskip
\noindent
\begin{ex}\label{ex1} 
{\em
Let us fix non-zero polynomials $r_1,r_2\in k[x]$ and $h_1,h_2\in k[x,t]$ such that 
\begin{itemize}
\item $\gcd (r_1,r_2)=r\in k[x]\setminus k$;
\item $h_i=a_it^{m_i}+b_{i,m_i-1}(x)t^{m_i-1}+\ldots+b_{i,2}(x)t^2$, $i=1,2$ and $2 \leqslant j \leqslant m_i-1$, where $a_i\in k\setminus\{0\}$ and $b_{i,j} \in k[x]$.
\end{itemize}
We consider $h_i=h_i(t)$ as a polynomial in one variable $t$ with coefficients in $k[x]$ and denote its formal derivative by $h_i'=h_i'(t)$.

 \smallskip
\noindent
Consider the following automorphisms from $\mathrm{T}_2(k(x))$:
$$
\beta_1\colon
\begin{pmatrix}
y\\
z
\end{pmatrix}
\rightarrow
\begin{pmatrix}
y\\
r_1(x)z-h_1(y)
\end{pmatrix}\text{ and }
\beta_2\colon
\begin{pmatrix}
y\\
z
\end{pmatrix}
\rightarrow
\begin{pmatrix}
y\\
r_2(x)z-h_2(y)
\end{pmatrix}.
$$
Let us denote $\sigma=r_1(x)z-h_1(y)$.
Then we consider the composition 
$$\alpha=\theta\circ\beta_2\circ\theta\circ\beta_1\colon
\begin{pmatrix}
y\\
z
\end{pmatrix}
\rightarrow
\begin{pmatrix}
r_2(x)y-h_2(\sigma)\\
\\
\sigma
\end{pmatrix}.
$$
Set  $f:=r_2(x)y-h_2(\sigma)=r_2(x)y-h_2(r_1(x)z-h_1(y))$ and $g:=\sigma=r_1(x)z-h_1(y)$. 
Let $D$ be the corresponding $k[x]$-LND of $B$. 
According to this decomposition, we obtain a full MC-chain 
\begin{multline*}
\delta_1=\frac{\partial}{\partial y},\qquad \delta_2=\frac{\partial}{\partial z},\qquad \delta_3\colon
\begin{pmatrix}
y\\
z
\end{pmatrix}
\rightarrow
\begin{pmatrix}
r_1(x)\\
h_1'(y)
\end{pmatrix},
\\
\delta_4=D\colon\begin{pmatrix}
y\\
z
\end{pmatrix}
\rightarrow
\begin{pmatrix}
h_2'(\sigma)r_1(x)\\
r_2(x)+h_2'(\sigma)h_1'(y)
\end{pmatrix}.
\end{multline*}
So, $\mathrm{lev}_{(x,y,z)}D=4$ and one can write $D$ explicitly as
$$D=\Big{(}h_2'(r_1(x)z-h_1(y))r_1(x)\Big{)}\dfrac{\partial}{\partial y}+\Big{(}r_2(x)+h_2'(r_1(x)z-h_1(y))h_1'(y)\Big{)}\dfrac{\partial }{\partial z}.$$
 Let us prove that $\mathrm{ulev}\,D=4$, i.e. that $D$ is non-triangularizable. 
Take $p=1,q=0$. Then
$\overline{f}=\overline{h_2(h_1(y))}=a_1a_2y^{m_1m_2}$, $\overline{g}=\overline{h_1(y)}=a_1y^{m_1}$. Hence, $\deg \overline{f}>\deg \overline{g}$ and polynomials $\overline{f}$ and~$g$ are primitive. We have $L(f)=r_2(x)y$, $L(g)=r_1(x)z$. Therefore, 
$\gcd(d(L(f),d(L(g)))=r(x)\in k[x]\setminus k$. Also we have $g(x,0,0)=h_1(0)=0$. So, all conditions of Proposition~\ref{usnap} are satisfied. Hence, by Corollary~\ref{ustr}, $D$ is non-triangularizable.

}
\end{ex}

\begin{ex}\label{ex2} 
{\em
Following the notations of Example~\ref{ex1}, let us take  $r_1(x)=r_2(x)=x$, $h_1(x,t)=h_2(x,t)=t^2$. Then 
$$D=2x(xz-y^2)\dfrac{\partial}{\partial y}+\Big {(}x+4y(xz-y^2)\Big{)}\dfrac{\partial }{\partial z}$$
 is non-triangularizable.
}
\end{ex}

\begin{rem}\label{varema}
{\em
When we put  $r_1(x)=r_2(x)=x$, $h_1(x,t)=h_2(x,t)=t^2$ as in Example~\ref{ex2}, the automorphism $\alpha$ from Example~\ref{ex1} equals $\alpha=\theta\circ\beta\circ\theta\circ\beta,$ where 
$$
\beta\colon
\begin{pmatrix}
y\\
z
\end{pmatrix}
\rightarrow
\begin{pmatrix}
y\\
xz-y^2
\end{pmatrix}.
$$
If we would like to construct LND with universal level $5$, the complexity of the corresponding automorphism should be $3$. The natural candidate is
$\varphi=\theta\circ\beta\circ\theta\circ\beta\circ\theta\circ\beta.$
}
\end{rem}

\smallskip
\noindent
This leads to the following question. An affirmative answer to this question will answer Question~\ref{qu} affirmatively.
\begin{que}\label{qq}
Is $c(\varphi)=3$?
\end{que}

\section{Rank $\mathbf{3}$ derivations}\label{rk3der}
In~\cite{F98}, Freudenburg introduced the following  example of a $\mathrm{rank}\,3$ derivation $\Delta$ of $B=k[x,y,z]$. Denote 
$$u=xz-y^2~~\text{and}~~ v=zu^2+2x^2yu+x^5.$$
Then $\Delta={\rm Jac}(u,v,\cdot).$
The proof that $\Delta$ has rank 3 is based on the fact that $\Delta$ is homogeneous with respect to the standard $\mathbb{Z}$-grading. Till now, all known rank $3$  derivations are $\mathbb{Z}$-homogeneous for suitable $\mathbb{Z}$-gradings.  In this section, we give a new proof of the fact that $\Delta$ has rank 3. This proof does not use the homogeneity of $\Delta$.  Moreover, we prove a sufficient condition for an LND obtained by local slice construction to be of rank~3. This approach allows us to give new examples of rank $3$ LNDs, which are not homogeneous. In particular, we construct a family of rank $3$ LNDs, of which $\Delta$ is a member.

\smallskip
\noindent
We begin with an elementary lemma.
\begin{lem}\label{rk3com1}
Let $B=k^{[3]}$, $D (\neq 0) \in {\rm LND}(B)$ and $A:={\rm Ker}~D$. Let $g$ be a local slice of $D$ and let $D(g)=f\ \in A$. Then $A[g]$ is $D$-invariant and the restriction of $D$ to $A[g]$ equals $f\dfrac{\partial }{\partial g}\in {\rm LND}(A[g])$. 
\end{lem}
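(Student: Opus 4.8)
The plan is to first pin down the algebraic structure of $A[g]$ and then read off both claims by a one-line computation. The key preliminary observation is that $A[g]$ is a polynomial ring in one variable over $A$, i.e. $A[g]\cong A^{[1]}$. To establish this, I would recall from \cite[Principle 1]{F} that $A=\mathrm{Ker}\,D$ is factorially closed in $B$, hence algebraically closed in $B$, and that $\mathrm{tr.deg}_A B=1$. Since $D(g)=f\neq 0$, the element $g$ does not lie in $A$. Because $A$ is algebraically closed in $B$, any element of $B$ algebraic over $A$ already lies in $A$; as $g\notin A$, it must be transcendental over $A$. Therefore $A[g]\cong A^{[1]}$, and in particular the $A$-derivation $\frac{\partial}{\partial g}$ of $A[g]$ sending $g\mapsto 1$ and killing $A$ is well defined.

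Next I would verify $D$-invariance by direct computation. Every element of $A[g]$ is a polynomial $h=\sum_i a_i g^i$ with $a_i\in A$. Since $D(a_i)=0$ and $D(g^i)=i\,f\,g^{i-1}$, the Leibniz rule gives
$$
D(h)=\sum_i i\,a_i f\,g^{i-1},
$$
which again lies in $A[g]$ because $a_i,f\in A$. Hence $D(A[g])\subseteq A[g]$, so $A[g]$ is $D$-invariant and the restriction $D|_{A[g]}$ is a well-defined derivation of $A[g]$.

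To identify this restriction, I would observe that both $D|_{A[g]}$ and $f\frac{\partial}{\partial g}$ are $A$-derivations of $A[g]=A^{[1]}$: the former because $A\subseteq\mathrm{Ker}\,D$, the latter by definition. An $A$-derivation of a polynomial ring $A^{[1]}$ is determined by its value on the variable $g$, and both send $g$ to $f$; hence $D|_{A[g]}=f\frac{\partial}{\partial g}$. Local nilpotence is then immediate: since $f\in A=\mathrm{Ker}\,\frac{\partial}{\partial g}$, the derivation $f\frac{\partial}{\partial g}$ is a replica of the locally nilpotent $\frac{\partial}{\partial g}$ of $A^{[1]}$ and is therefore locally nilpotent; alternatively, the restriction of an LND to an invariant subalgebra is always an LND.

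There is no genuine obstacle here: the entire statement is a bookkeeping consequence of $A[g]$ being a polynomial ring over $A$. The only point requiring a moment's care is justifying the transcendence of $g$ over $A$, so that $\frac{\partial}{\partial g}$ makes sense and the comparison of derivations on the single generator $g$ is legitimate; once that is in place, everything else is routine.
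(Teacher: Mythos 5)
Your proof is correct, and it takes a slightly different and in fact more economical route than the paper's. The paper first invokes Miyanishi's theorem to write $A=k[u,v]$, defines $\delta=f\frac{\partial}{\partial g}$ on $A[g]$, and then verifies $\delta=D$ by extending $\delta$ to the localization $B[\frac{1}{f}]=A[\frac{1}{f}][g]$ (Proposition \ref{proplocsl}) and checking the two derivations agree there; the $D$-invariance of $A[g]$ and the transcendence of $g$ over $A$ are left implicit. You instead justify the structural point up front --- $A$ is factorially closed, hence algebraically closed in $B$, and $g\notin A$, so $A[g]\cong A^{[1]}$ --- then get invariance by the direct Leibniz computation $D\bigl(\sum_i a_ig^i\bigr)=\sum_i i\,a_i f g^{i-1}$, and identify the restriction by comparing two $A$-derivations of $A^{[1]}$ on the single generator $g$. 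What your version buys is generality and self-containment: it never uses $B=k^{[3]}$ or the localization machinery, so the lemma as you prove it holds for any affine $k$-domain $B$; what the paper's version buys is the explicit identity of $D$ with the extension of $\delta$ to all of $B[\frac{1}{f}]$, which is the form in which the lemma is actually used later (e.g.\ in Remark \ref{nepsam}). Both arguments are sound; yours fills in the two small points (transcendence of $g$, invariance of $A[g]$) that the paper glosses over.
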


\begin{proof} Let $A=k[u,v](=k^{[2]})$. Let $\delta =f\dfrac{\partial }{\partial g}$. Then $\delta(u)=\delta(v)=0$ and $\delta(g)=f$. So $D\vert_{A[g]}=\delta \in {\rm LND}(A[g])$. Since $g$ is a local slice of $D$, we have $k[x,y,z,\frac{1}{f}]=k[u,v,\frac{1}{f}][g]$. So any element $s \in B$ can be represented as 
$$s=\dfrac{f_s(u,v,g)}{f^{m_s}},~\text{where}~f_s \in k[u,v,g]=A[g]~\text{and}~m_s \in \bN \cup \{0\}.$$
One can extend $\delta$ to $B$ by defining $\delta(s):=\dfrac{\delta(f_s)}{f^{m_s}}$. Then $$f^{m_s}{\delta}(s)=\delta(f_s(u,v,g))=f\dfrac{\partial f_s}{\partial g}=f^{m_s}D (s).$$ So $\delta=D$ in $B$.
\end{proof}

\smallskip
\noindent
\begin{rem}\label{nepsam}
{\em In notations as above, $D$ admits a non-equivalent LND $E$ of $B$ commuting with $D$ if and only if $\delta=D|_{A[g]}$ admits a non-equivalent LND $\xi$ of $A[g]$.
Indeed, $f$ generates both plinth ideals $\mathrm{pl}(D)$ and  $\mathrm{pl}(\delta)$. Then Proposition~\ref{rk2com1} implies the goal.
}
\end{rem}

\smallskip
\noindent
Thus we have the following result.
\begin{prop}\label{rk3com2}
Let $B=k^{[3]}$, $D (\neq 0) \in {\rm LND}(B)$ be of rank $3$ and $A:={\rm Ker}~D$. Then~if $g$ is any minimal local slice of $D$, then $D(g)$ cannot be written as an univariate polynomial in any variable of $A$.
\end{prop}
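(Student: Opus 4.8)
The plan is to argue by contradiction and reduce the claim to the standard characterization of rank: an LND of $B=k^{[3]}$ has rank $\leqslant 2$ precisely when $\mathrm{Ker}\,D$ contains a variable of $B$, so that $\mathrm{rk}\,D=3$ forbids any variable of $A=\mathrm{Ker}\,D$ from being a variable of $B$. Thus it will suffice to show that if $D(g)$ were a univariate polynomial in some variable of $A$, then that variable would be forced to be a variable of $B$, and this forcing is exactly what Lemma~\ref{dtriang}(ii) provides.

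Concretely, I would suppose for contradiction that $g$ is a minimal local slice of $D$ with $D(g)=f(w)$ for some variable $w$ of $A$ and some univariate $f\in k^{[1]}$. Since $g$ is a local slice, $D(g)\neq 0$, so $D(g)=f(w)\in k[w]\setminus\{0\}$. Now $w$ is a variable of $A$ and there exists an element, namely $g\in B$, with $D(g)\in k[w]\setminus\{0\}$; hence Lemma~\ref{dtriang}(ii) applies and shows that $w$ is a variable of $B$. But $w\in A=\mathrm{Ker}\,D$, so $w$ is a variable of $B$ lying in the kernel, which forces $\mathrm{rk}\,D\leqslant 2$, contradicting the hypothesis $\mathrm{rk}\,D=3$.

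I would note that this route uses neither minimality of the local slice nor irreducibility of $D$, and in fact proves the stronger statement that $D(g)$ fails to be univariate in a variable of $A$ for \emph{every} local slice of a rank-$3$ derivation. A second, more thematic route runs through the commuting-derivation machinery of this section: if $D(g)=f(w)$ with $w$ a variable of $A$, then by Remark~\ref{nepsam} (equivalently, by applying Proposition~\ref{rk2com1}) the derivation $D$ would admit a non-equivalent commuting LND, and for irreducible $D$ this contradicts Theorem~\ref{rk2thm1}(i). I would prefer the first route, since it avoids any irreducibility hypothesis and hence also covers reducible rank-$3$ derivations.

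I do not expect a genuine obstacle here; the real content is packaged in Lemma~\ref{dtriang}(ii) together with the rank characterization, and the only points needing care are two degeneracies. First, if $f$ were constant then $D(g)\in k^{*}$ would make $g$ a slice, so by the Slice Theorem (Theorem~\ref{st}) $D$ would have rank $1$; this is harmless, being excluded by $\mathrm{rk}\,D=3$, and for a minimal local slice it is already ruled out because the plinth ideal would then contain a unit. Second, one must line up the hypotheses of Lemma~\ref{dtriang}(ii) exactly, checking that ``variable of $A$'' is being used in the sense required. With these verifications in place the proof is immediate, so the main task is bookkeeping rather than computation.
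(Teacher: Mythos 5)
Your proposal is correct and follows essentially the same route as the paper: assume for contradiction that $D(g)\in k[w]$ for a variable $w$ of $A$, invoke Lemma~\ref{dtriang}(ii) to conclude that $w$ is a variable of $B$ lying in $\mathrm{Ker}\,D$, and contradict $\mathrm{rk}\,D=3$. Your observation that neither minimality of the local slice nor irreducibility is actually used is accurate and applies equally to the paper's own argument.
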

\begin{proof}Suppose the converse. Then there exists $x \in A$ such that $A=k[x]^{[1]}$ and $D(g) \in k[x]$. By Lemma \ref{dtriang} (ii), $x$ is a variable in $B$ and since $\rm{rk}~D=3$, we have a contradiction. 
\end{proof}

Lemma~\ref{rk3com1} and Proposition~\ref{rk3com2} imply that $D$ is an extension of rank 1 derivation $f\frac{\partial}{\partial g}$ on $A[g]$, where $f=D(g)\in A$ cannot be written as an univariate polynomial in any variable of~$A$. Note that similar extension we obtain considering any slice $h$ of $D$. Indeed, since $D(h)$ is divisible by $f=D(g)$, it cannot be written as an univariate polynomial in any variable of~$A$. 
If $D$ is irreducible, the converse of Proposition \ref{rk3com2} is true. So we obtain the following statement.
\begin{prop}\label{rrr} Let $B=k^{[3]}$ and $D$ be an irreducible LND of $B$. Put $A=\mathrm{Ker}\,D$. Let $g$ be a minimal local slice of $D$. Suppose $D(g)$ cannot be written as an univariate polynomial in any variable of $A$. Then $\mathrm{rk}\,D=3$.
\end{prop}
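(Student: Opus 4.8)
The plan is to prove the statement by contradiction, exploiting the converse structure with Proposition~\ref{rk3com2}: assuming $\mathrm{rk}\,D\leqslant 2$, I will show that $D(g)$ is forced to be a univariate polynomial in some variable of $A$, contradicting the hypothesis. The guiding idea is that rank at most $2$ produces, via the machinery already assembled, a variable $x\in A$ together with an element whose $D$-image is a \emph{univariate} polynomial in $x$ lying in the plinth ideal; minimality of $g$ then pins $D(g)$ down up to a factor that must itself sit inside $k[x]$.

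First I would invoke Lemma~\ref{rk2lem}. Since $D$ is irreducible and (by the contradiction hypothesis) $\mathrm{rk}\,D\leqslant 2$, there is a variable $x$ of $B$ with $x\in A$ and an element $g'\in B$ with $D(g')=f(x)\in k[x]\setminus\{0\}$. By Lemma~\ref{dtriang}(i), $x$ is also a variable of $A\cong k^{[2]}$. I then record that $f(x)=D(g')\in DB$ while $f(x)\in k[x]\subseteq A$, so $f(x)$ lies in the plinth ideal $\mathrm{pl}(D)=A\cap DB$.

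Next I use that $g$ is a \emph{minimal} local slice: by Theorem~\ref{pl}(ii) the plinth ideal is principal, and by definition $D(g)$ is a generator of it. Hence $D(g)\mid f(x)$ in $A$, say $f(x)=D(g)\cdot a$ with $a\in A$. Since $x$ is a variable of $A$, I may write $A=k[x,w]\cong k^{[2]}$, and $k[x]$ is factorially closed in $A$ (if a product of two elements of $k[x,w]$ lands in $k[x]$, then both factors have $w$-degree zero). Therefore the factorization $D(g)\cdot a=f(x)\in k[x]$ forces $D(g)\in k[x]$, i.e. $D(g)$ is a univariate polynomial in the variable $x$ of $A$. This contradicts the hypothesis, and so $\mathrm{rk}\,D=3$.

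The argument is short because the earlier results do the heavy lifting; the one step that deserves care is the passage through the plinth ideal. I expect the main (and only mild) obstacle to be justifying cleanly that $f(x)$ is divisible by $D(g)$: this uses crucially that $g$ is a \emph{minimal} local slice, so that $D(g)$ generates the principal ideal $\mathrm{pl}(D)$, together with the observation that $f(x)$ genuinely lies in $A\cap DB$ and not merely in $DB$. Once divisibility inside $A$ is secured, factorial closedness of $k[x]$ in $A$ (equivalently, in $B$, as $x$ is a variable of $B$) closes the argument at once.
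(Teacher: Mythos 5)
Your proof is correct and follows essentially the same route as the paper: assume $\mathrm{rk}\,D\leqslant 2$, apply Lemma~\ref{rk2lem} to produce $\tilde g$ with $D(\tilde g)\in k[x]$ for a variable $x\in A$, use minimality of $g$ to get $D(g)\mid D(\tilde g)$ in $A$, and conclude $D(g)\in k[x]$. Your explicit justification of the last step via factorial closedness of $k[x]$ in $A$ is a welcome detail the paper leaves implicit.
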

\begin{proof} Assume that $D$ is irreducible of rank at most $2$, by Lemma \ref{rk2lem}, there exists $\tilde{g} \in B$ and a variable $x$ of $B$ in $A$ such that $D(\tilde{g}) \in k[x]$. Let $g$ be any minimal local slice of $D$. Then $D(g) \mid D(\tilde{g})$. Hence $D(g)$ is a univariate polynomial in $x$, which by Lemma \ref{dtriang} (i) is also a variable in $A$.
\end{proof}

Using the assertions above we obtain the following criterion for an irreducible derivation $D$ of $k^{[3]}$ to be an LND of rank 3.

\begin{thm}\label{usrt}
Let $B=k[x,y,z]$ and $D$ be an irreducible derivation of $B$. Then $D$ is an LND of rank 3 if and only if there are polynomials $f,h,r\in B$ such that 
\begin{enumerate}
\item [\rm (i)] $\mathrm{Ker}\,D=k[f,h]$;

\item [\rm (ii)] $D(r)=v(f,h)$, where $v(f,h)\in k[f,h]$ is not an univariate polynomial in $k[f,h]$;

\item [\rm (iii)] there are no such $p,q,l\in k[f,h]$ and $s\in B$, that $r-p=sq$, $v=ql$, and  $q\notin k$;

\item [\rm (iv)] $x,y,z\in k[f,h,r,\frac{1}{v}]$.
\end{enumerate}
\end{thm}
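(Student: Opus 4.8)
The plan is to prove the two implications separately, exploiting the fact that conditions (i)--(iii) are exactly an algebraic packaging of the hypotheses of Propositions~\ref{rk3com2} and~\ref{rrr}: namely that $\mathrm{Ker}\,D=k[f,h]$, that $r$ is a \emph{minimal} local slice, and that $v=D(r)$ is not a univariate polynomial in any variable of the kernel (this is how I read the phrase in (ii)), while (iv) is the localization identity supplied by Proposition~\ref{proplocsl}. The technical backbone is a short lemma: assuming (i) and $v\in k[f,h]=\mathrm{Ker}\,D$, condition (iii) is equivalent to $r$ being a minimal local slice, provided $D$ is already known to be an LND (so that Theorem~\ref{pl} applies). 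Indeed, if (iii) fails, then writing $r-p=sq$, $v=ql$ and applying $D$ (recall $D$ kills $p,q\in k[f,h]$) gives $v=q\,D(s)$, so $D(s)=l=v/q\in\mathrm{Ker}\,D$; thus $s$ is a local slice with $D(s)=l\in\mathrm{pl}(D)$ and $v=ql$ with $q\notin k$, so $v$ is a proper multiple of a plinth element and cannot generate $\mathrm{pl}(D)$, i.e. $r$ is not minimal. Conversely, if $r$ is not minimal, let $w$ generate $\mathrm{pl}(D)$ and $r_0$ be a minimal local slice with $D(r_0)=w$; then $v=cw$ with $c\notin k^*$, and $p:=r-cr_0\in\mathrm{Ker}\,D$, $s:=r_0$, $q:=c$, $l:=w$ violate (iii).

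For the \textbf{only if} direction, assume $D$ is an irreducible LND of rank $3$. Miyanishi's theorem (Theorem~\ref{miya}) gives $\mathrm{Ker}\,D=k[f,h]\cong k^{[2]}$, which is (i). By the Plinth Ideal Theorem (Theorem~\ref{pl}) the plinth ideal is principal and nonzero; choosing a minimal local slice $r$ with generator $v=D(r)\in k[f,h]$ gives the first half of (ii), and Proposition~\ref{rk3com2} guarantees that $v$ is not a univariate polynomial in any variable of the kernel, completing (ii). Since $r$ is minimal, the lemma above yields (iii). Finally Proposition~\ref{proplocsl} gives $B[1/v]=k[f,h][1/v][r]=k[f,h,r,1/v]$, so in particular $x,y,z\in k[f,h,r,\tfrac{1}{v}]$, which is (iv).

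For the \textbf{if} direction the crucial and least formal step is to show that the given derivation $D$ is locally nilpotent, since a priori we may \emph{not} invoke Miyanishi or the plinth theorem. Here I would argue through the localization. From (iv), together with $f,h,r\in B$, one gets $B[1/v]=k[f,h,r,\tfrac{1}{v}]$; moreover $\mathrm{Frac}(B[1/v])=\mathrm{Frac}(B)=k(f,h,r)$ has transcendence degree $3$ over $k$, so $f,h,r$ are algebraically independent and $k[f,h,r,\tfrac{1}{v}]=k[f,h,\tfrac{1}{v}][r]$ is a polynomial ring in $r$ over $k[f,h,\tfrac{1}{v}]$. On this ring $D$ kills $f,h,\tfrac{1}{v}$ and sends $r\mapsto v$, so $D=v\,\frac{\partial}{\partial r}$, a replica of the partial derivative $\frac{\partial}{\partial r}$ by the kernel element $v\in k[f,h]$; hence $D|_{B[1/v]}$ is locally nilpotent. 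Since $D$ maps $B$ into $B$ and $B\hookrightarrow B[1/v]$, for each $b\in B$ the identity $D^{N}(b)=0$ (valid in $B[1/v]$ for large $N$, by $r$-degree) already holds in $B$, so $D\in\mathrm{LND}(B)$.

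Once $D$ is known to be an LND, I finish by identifying the rank. By (ii), $r$ is a local slice with $D(r)=v\in\mathrm{Ker}\,D$, and since Theorem~\ref{pl} now applies, the lemma turns (iii) into the statement that $r$ is a minimal local slice. As $v$ is not a univariate polynomial in any variable of $\mathrm{Ker}\,D=k[f,h]$ by (ii), Proposition~\ref{rrr} applies verbatim and gives $\mathrm{rk}\,D=3$. The main obstacle I anticipate is exactly the local nilpotency step in the if direction: one must avoid circular use of the structure theorems, which presuppose $D\in\mathrm{LND}(B)$, and instead extract nilpotency purely from the localization identity (iv) and the replica description $D=v\,\frac{\partial}{\partial r}$; checking that $f,h,r$ are algebraically independent and that nilpotency on $B[1/v]$ descends to $B$ are the points requiring care.
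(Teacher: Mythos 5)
Your proposal is correct and follows essentially the same route as the paper: Miyanishi, the plinth ideal theorem, Propositions~\ref{rk3com2} and~\ref{rrr}, and Proposition~\ref{proplocsl} for one direction, and the localization identity $B[\frac{1}{v}]=k[f,h,\frac{1}{v}][r]$ with $D=v\frac{\partial}{\partial r}$ plus the equivalence of (iii) with minimality of $r$ for the other. The only difference is that you spell out details the paper leaves implicit (algebraic independence of $f,h,r$, descent of nilpotency from $B[\frac{1}{v}]$ to $B$, and the two-way lemma linking (iii) to minimality), which is a welcome but not substantive elaboration.
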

\begin{proof}
Suppose that the conditions (i)-(iv) hold. Then $D$ is an LND. Indeed, $D(f)=D(h)=D(v)=0$ and $D(r)\in k[f,h]$. So $D$ is locally nilpotent on $k[x,y,z,\frac{1}{v}]=k[f,h,r,\frac{1}{v}]\supseteq B$. Hence, $D$ is an LND of $B$.
Let $A=\mathrm{Ker}\,D= k[f,h]$ and $l\in A$ be the generator of the plinth ideal $\mathrm{pl}(D)$. Then there exists $s\in B$, that $D(s)=l$. Since $v\in \mathrm{pl}(D)=(l)$, there exists $q\in A$, such that $v=ql$. Therefore, $D(r-sq)=v-lq=0$. So, $r-sq=p\in A$. By (iii), $q\in k^*$. Hence, $l=\dfrac{v}{q}$ is not a univariate polynomial. So there is no univariate polynomial in $\mathrm{pl}(D)=A \cap {\rm Im}~D$. By Proposition~\ref{rrr}, $\mathrm{rk}\,D=3$.

Conversely, suppose $D$ is locally nilpotent and $\mathrm{rk}\, D=3$. Then there exist $f,h,r, v \in B$ such that $A=\mathrm{Ker}\,D=k[f,h]$, $\mathrm{pl}(D)=(v)$, and $D(r)=v$. By Proposition~\ref{rk3com2}, $v$ is not univariate. If there exist $p,q,l\in k[f,h]$ and $s\in B$, such that $r-p=sq$, $v=ql$ and  $q\notin k$, then $D(s)=D\left(\frac{r-p}{q}\right)=l\notin (v)$. This contradicts that $l \in \mathrm{pl}(D)=(v)$.  By Proposition~\ref{proplocsl},  $x,y,z\in k[f,h,r,\frac{1}{v}]$. 
\end{proof}

\smallskip
\noindent
The following corollary gives sufficient conditions for a LND of $k^{[3]}$ obtained by a local slice construction to be of rank $3$.
\begin{cor}\label{lscor}
Let $D$ be an irreducible LND of $B=k[x,y,z]$ with ${\rm Ker}~D=k[f,g]$. Let $E:=Jac(f,h,.)$ be the derivation obtained from $D$ by a local slice construction using the data $(f,g,r)$, where $D(r)=gP(f)\neq 0$ and $P \in k^{[1]}$ is not a constant. If $E$ is irreducible and $r$ is a minimal local slice for $E$ then $E$ is an LND of rank $3$.
\end{cor}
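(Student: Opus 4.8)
The plan is to reduce the statement to Proposition~\ref{rrr} via the local slice construction. First I would invoke Theorem~\ref{locslcon}: part~(a) gives that $E=\mathrm{Jac}(f,h,\cdot)$ is an LND of $B$, part~(b) gives $E(r)=-hP(f)$, and --- since $E$ is assumed irreducible --- part~(c) gives $\mathrm{Ker}\,E=k[f,h]$. By Miyanishi's theorem (Theorem~\ref{miya}) this kernel $A:=\mathrm{Ker}\,E$ is isomorphic to $k^{[2]}$, so it has transcendence degree $2$ over $k$; as it is generated by $f$ and $h$, these two elements are algebraically independent and the surjection $k^{[2]}\to A$ sending the variables to $f,h$ is an isomorphism. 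Hence $\{f,h\}$ is a coordinate system of $A$, and in particular $h$ is a variable, and therefore a prime element, of $A$.

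Since $r$ is assumed to be a minimal local slice of the irreducible LND $E$, Proposition~\ref{rrr} applies, and it suffices to prove that $E(r)=-hP(f)$ cannot be written as a univariate polynomial in any variable of $A=k[f,h]$. This is the heart of the argument. Suppose, for contradiction, that $hP(f)=Q(w)$ for some variable $w$ of $A$ and some $Q\in k^{[1]}$, necessarily nonconstant. I would then compare prime factorizations in the UFD $A$. Writing $Q(w)=c\prod_i q_i(w)^{e_i}$ with $q_i$ irreducible in $k[w]$, each factor $q_i(w)$ is prime in $A=k[w,w']$ because $A/(q_i(w))\cong\bigl(k[w]/(q_i)\bigr)[w']$ is a domain. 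Thus every prime factor of $Q(w)$ lies in $k[w]$, and by uniqueness of factorization the prime $h$, which divides $hP(f)=Q(w)$, is associate (up to a unit in $k^*$) to one of the $q_i(w)$. Consequently $h\in k[w]$.

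Next I would show that $h\in k[w]$ forces $h$ to be affine-linear in $w$. Writing $h=\psi(w)$ with $\psi\in k[t]$, the quotient $A/(h)$ must be isomorphic to $k^{[1]}$ because $h$ is a variable of $A$. But $A/(h)\cong\bigl(k[w]/(\psi)\bigr)[w']$, and this fails to be isomorphic to $k^{[1]}$ whenever $\deg\psi\geqslant 2$: if $\psi$ is reducible then $k[w]/(\psi)$ is not a domain, while if $\psi$ is irreducible of degree $\geqslant 2$ then $k[w]/(\psi)$ is a proper finite field extension of $k$, which cannot embed into a polynomial ring $k^{[1]}$. Hence $\deg\psi=1$, so $k[w]=k[h]$ and $Q(w)$ is in fact a polynomial in $h$ alone. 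Then $hP(f)$ would have $f$-degree $0$, contradicting that $P$ is nonconstant, so that $hP(f)$ genuinely involves $f$. This contradiction shows that $E(r)$ is not univariate in any variable of $A$, and Proposition~\ref{rrr} then yields $\mathrm{rk}\,E=3$.

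I expect the step identifying $h$ as affine-linear in $w$ --- equivalently, ruling out the possibility that a coordinate of $A$ is a nontrivial polynomial expression in another coordinate --- to be the main technical obstacle, and it is precisely here that the hypothesis that $P$ is nonconstant is indispensable: a constant $P$ would make $E(r)$ a scalar multiple of the variable $h$, which would be compatible with rank at most $2$ and would break the argument.
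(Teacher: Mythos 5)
Your proposal is correct, and it is worth comparing with the paper's own argument because the two distribute the work differently. The paper proves the corollary by feeding $E$ into Theorem~\ref{usrt}: it uses the minimality of $r$ to verify condition (iii) (the non-existence of $p,q,l,s$ with $r-p=sq$, $v=ql$, $q\notin k$) and then simply asserts that conditions (i)--(iv) hold, which in particular takes for granted condition (ii), namely that $-hP(f)$ is not a univariate polynomial in any variable of $k[f,h]$. You instead bypass Theorem~\ref{usrt} entirely and reduce directly to Proposition~\ref{rrr}, for which the hypotheses ``$E$ irreducible'' and ``$r$ a minimal local slice'' are already given verbatim, so the only thing left to check is precisely the non-univariateness of $E(r)=-hP(f)$ --- and this is the one point the paper does not argue. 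Your treatment of it is sound: $h$ is a variable, hence a prime, of $A=k[f,h]\cong k^{[2]}$; if $-hP(f)=Q(w)$ for a variable $w$ of $A$, then unique factorization in $A$ (together with the fact that irreducible elements of $k[w]$ stay prime in $k[w,w']$ and that $A^*=k^*$) forces $h\in k[w]$; the isomorphism $A/(h)\cong k^{[1]}$ then forces $h$ to be affine-linear in $w$, so $k[w]=k[h]$ and $hP(f)\in k[h]$, contradicting $\deg_f\bigl(hP(f)\bigr)=\deg P\geqslant 1$. So your route is logically cleaner (it does not re-derive from minimality what Proposition~\ref{rrr} already accepts as a hypothesis) and it supplies a detail the paper elides; the paper's route, in exchange, packages the corollary as an instance of its general criterion Theorem~\ref{usrt}, which is the form reused in Theorem~\ref{rk3final}.
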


\begin{proof} By Theorem~\ref{locslcon} the derivation $E$ is an irreducible LND of $B$ with ${\rm Ker}~E=k[f,h]$ and $E(r)=-hP(f)$. Suppose there exist $\alpha ,\beta, l \in k[f,h]$ and $s \in B$ such that $r-\alpha=s\beta$ and $\beta l=-hP(f)$. Then $E(s)=E(\frac{r-\alpha}{\beta})=l \in {\rm pl}(E)=(-hP(f)).$ So $\beta \in k^*$. Since $E$ satisfies all the conditions (i)-(iv) of Theorem \ref{usrt}, we have $E$ to be of rank $3$.
\end{proof}

\medskip
\indent
Now, let us introduce a family of derivations. Let us fix integer numbers $m\geqslant 2$, $n\geqslant 1$ and a polynomial $F\in k[t_1,t_2]$ such that $F(t_1,0) \notin k$.

Let us consider an LND $D=x\frac{\partial}{\partial y}+my^{m-1}\frac{\partial}{\partial z}$. Let us put 
$$f=xz-y^m,\qquad g=x,\qquad r=xF(x,f)+yf^n.$$ Then $\mathrm{Ker}\,D=k[f,g]$ and $D(r)=gf^n$. 
It is easy to see that the polynomial $\phi\in k[f]^{[1]}$ of minimal degree such that $g\mid \phi(r)$ equals $\phi(f,r)=f^{mn+1}+r^m$. So we have $h=\frac{f^{mn+1}+r^m}{x}$ and $E=E(m,n,F)=Jac(f,h,\cdot)$ is the derivation obtained from $D$ by a local slice construction using the data $(f,g,r)$.
We have $E(2,1,t_1^2)=\Delta$. Indeed, in this case $f=xz-y^2$, $r=x^3+yf$, 
$h=\frac{f^3+r^2}{x}=f^2z+x(x^2)^2+2x^2yf=zf^2+2x^2yf+x^5$. So, $\{E(m,n,F)\}$ is a family of derivations containing $\Delta$. It is interesting to note that the only homogeneous derivations with respect to standard $\mathbb{Z}$-grading among $E(m,n,F)$ are $E(2,n,S(t_1^2, t_2))$, where $S$ is a homogeneous polynomial in two variables of degree $n$.
\begin{thm}\label{rk3final}
Let $B=k[x,y,z]$. For each $m\geqslant 2$, $n\geqslant 1$ and $F\in k[t_1,t_2]$ such that $F(t_1,0)\notin k$, the derivation $E=E(m,n,F)$ is an LND of $B$ with ${\rm rk}~E=3$.
\end{thm}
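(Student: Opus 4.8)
The plan is to verify the hypotheses of Corollary~\ref{lscor} for the data $(f,g,r)$. We have $g=x\in{\rm Ker}\,D=k[f,g]$; the derivation $D=x\frac{\partial}{\partial y}+my^{m-1}\frac{\partial}{\partial z}$ is an irreducible LND, since $\gcd(D(y),D(z))=\gcd(x,my^{m-1})=1$; and $D(r)=gf^{n}=gP(f)\neq0$ with $P=t^{n}$ non-constant because $n\geqslant1$. Hence $E={\rm Jac}(f,h,\cdot)$ is produced by a local slice construction, and Theorem~\ref{locslcon} already guarantees that $E$ is an LND with $E(r)=-hf^{n}$. The two remaining hypotheses of Corollary~\ref{lscor} are that $E$ be irreducible and that $r$ be a minimal local slice of $E$; once these are established, Corollary~\ref{lscor} gives ${\rm rk}\,E=3$. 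Since the gcd of the coefficients of $E$ and the irreducibility of $E$ are unchanged under extension of $k$, I will work over $\bar k$.

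Everything rests on two congruences modulo $f$. Expanding $h=(f^{mn+1}+r^{m})/x$ and reducing mod $f$ gives $h\equiv x^{m-1}F(x,0)^{m}$ and $r\equiv xF(x,0)\pmod f$; since $k[x]$ embeds in $B/(f)=k[x,y,z]/(xz-y^{m})$, these are genuine elements of $k[x]$. This is precisely where $F(t_{1},0)\notin k$ is used: it makes $\bar h:=h\bmod f=x^{m-1}F(x,0)^{m}$ a non-constant polynomial in $x$, so in particular $f\nmid h$. I also record that $(f-a)\nmid h$ for every $a\in k$: the case $a=0$ is $f\nmid h$, while for $a\neq0$ substituting $f\mapsto a$ leaves the term $a^{mn}z$, which cannot belong to the ideal $(xz-y^{m}-a)$. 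A separate preliminary fact is that $h$ is square-free: specializing $x$ to a generic scalar $\lambda$ makes $f_{\lambda}=\lambda z-y^{m}$ a coordinate of $k[y,z]$, and in the coordinates $(y,w)=(y,f_{\lambda})$ one has $\lambda h_{\lambda}=w^{mn+1}+(\lambda F(\lambda,w)+yw^{n})^{m}$; this polynomial and its $y$-derivative have no common zero (a shared zero forces $\lambda F(\lambda,w)+yw^{n}=0$, whence $\lambda h_{\lambda}=w^{mn+1}\neq0$), so $h_{\lambda}$ is square-free, and since $h$ is primitive over $k[x]$ this forces $h$ itself to be square-free.

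For irreducibility I would analyse the coefficient vector $(E(x),E(y),E(z))=\nabla f\times\nabla h$. Any common irreducible factor $c$ divides $E(r)=-hf^{n}$, so either $c=f$ or $c\mid h$. If $c\mid h$ then, $h$ being square-free, $c$ has multiplicity one in $h$, so writing $h=ch'$ we get $\nabla f\times\nabla h\equiv h'(\nabla f\times\nabla c)\pmod c$ with $c\nmid h'$; thus $c\mid\nabla f\times\nabla c$, i.e. $\nabla f\parallel\nabla c$ along $\{c=0\}$, so $f$ is constant on that surface and $c\mid(f-a)$ for some $a$. As $c$ and $f-a$ are irreducible this gives $(f-a)\mid h$, contradicting the previous paragraph. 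The case $c=f$ is ruled out identically: $\nabla f\parallel\nabla h$ along $\{f=0\}$ would make $h$ constant modulo $f$, contradicting that $\bar h$ is non-constant. Hence $E$ is irreducible, and by Theorem~\ref{locslcon}(c), ${\rm Ker}\,E=k[f,h]$.

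It remains to show $r$ is a minimal local slice, i.e. that the generator $v_{0}$ of ${\rm pl}(E)$ equals $-E(r)=hf^{n}$ up to a unit. Since $v_{0}\mid hf^{n}$ in $k[f,h]$ and $f,h$ are coprime irreducibles there, either $v_{0}=f^{i}$ or $h\mid v_{0}$. If $v_{0}=f^{i}$ then some $s$ satisfies $E(s)=f^{i}\in k[f]\setminus\{0\}$, and Lemma~\ref{dtriang}(ii) would force the variable $f=xz-y^{m}$ of ${\rm Ker}\,E$ to be a variable of $B$, which is false. So $v_{0}=hf^{i}$; and if $i<n$ then, choosing $s$ with $E(s)=v_{0}$, we get $E(r+f^{\,n-i}s)=0$, hence $r\equiv p\pmod f$ for some $p\in k[f,h]$. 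Reducing mod $f$ this says $xF(x,0)=\bar r\in k[\bar h]=k[x^{m-1}F(x,0)^{m}]$, which is impossible by a degree count, since $\deg_{x}\bar h>\deg_{x}\bar r>0$. Therefore $v_{0}=hf^{n}$ and $r$ is minimal, so Corollary~\ref{lscor} applies. I expect the genuine obstacle to be the irreducibility of $E$, and specifically the square-freeness of $h$: a repeated factor of $h$ is automatically a common factor of all three coefficients of ${\rm Jac}(f,h,\cdot)$, so it must be excluded, and the specialization argument above seems cleaner than attempting to factor $h$ directly in three variables.
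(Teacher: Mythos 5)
Your proof is correct and shares the paper's skeleton: both arguments reduce Theorem \ref{rk3final} to Corollary \ref{lscor}, hence must establish (a) irreducibility of $E$ and (b) minimality of the local slice $r$, and both rest on the same congruences $h\equiv x^{m-1}F(x,0)^m$ and $r\equiv xF(x,0)\pmod f$. The genuine divergence is in (a). The paper computes $E(x)=-mr^{m-1}f^n$ and $E(y)$ explicitly, shows $f\nmid E(y)$ by a leading-coefficient computation, and then forces a putative common factor to divide both $h$ and $r^{m-1}$, so that $\gcd(xh,r)=\gcd(f^{mn+1},r)$ is a non-unit and $f\mid r$, a contradiction. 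You instead exploit the cross-product form of ${\rm Jac}(f,h,\cdot)$: a common prime factor $c$ of the coefficients makes $\nabla f$ and $\nabla c$ parallel along $\{c=0\}$, so $c$ must be a fibre $f-a$ of $f$, which you exclude by checking $(f-a)\nmid h$ for all $a$. This is more conceptual and never touches $E(y)$, but it requires two auxiliary facts the paper's route avoids: square-freeness of $h$ (your specialization $x=\lambda$ works, though you should also dispose of the common zero with $w=0$, where $\lambda h_\lambda=(\lambda F(\lambda,0))^m\neq 0$ for generic $\lambda$) and primitivity of $h$ over $k[x]$, which you assert but should justify --- the monomial $y^{m^2n}z$ occurs in $h$ with coefficient $(-1)^{mn}$, coming from the term $f^{mn}z$. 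For (b), your treatment of the case $q=f^c$ (reduce mod $f$ and compare degrees in $x$) is essentially the paper's; for the case $h\mid q$ the paper specializes $y=z=0$, while you invoke Lemma \ref{dtriang}(ii) together with the fact that $xz-y^m$ is not a variable of $B$ (its zero locus is singular at the origin) --- both are valid, and yours is arguably cleaner. The passage to $\bar k$ is harmless, since the gcd of the coefficient ideal and the rank are insensitive to base extension.
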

\begin{proof}
To prove the goal applying Corollary~\ref{lscor} we need to prove that $E$ is irreducible and $r$ is a minimal local slice for $E$. 

{\it Irreducibility of E.} Suppose that the image of $E$ is contained in a proper ideal $(a)$, where $a \in B$.  Since $E(r)=-hf^n$, we have $a\mid hf^n$. 
We have
$$
E(x)=E\left(\frac{f^{mn+1}+r^m}{h} \right)=\frac{mr^{m-1}(-f^nh)}{h}=-mr^{m-1}f^n.
$$
Since
$$
y=\frac{r-xF(x,f)}{f^n},
$$
we have
$$
E(y)=E\left(\frac{r-xF(x,f)}{f^n} \right)=-r+\frac{\partial (xF(x,f))}{\partial x}(mr^{m-1}).
$$
By definition
\begin{multline*}
h=\frac{f^{mn+1}+r^m}{x}=\frac{f^{mn}(xz-y^m)+(xF(x,f)+yf^n)^m}{x}=\\
=f^{nm}z+x^{m-1}F^{m}(x,f)+mx^{m-2}F^{m-1}(x,f)yf^n+\ldots+mF(x,f)y^{m-1}f^{mn-n}.
\end{multline*}
Therefore, $h-x^{m-1}F^{m}(x,0)$ is divisible by $f$. Also $r-xF(x,0)$  is divisible by $f$.
Hence, there exists $u\in B$ such that
\begin{multline*}
E(y)=-x^{m-1}F^{m}(x,0)+\frac{\partial (xF(x,0))}{\partial x}(mx^{m-1}F(x,0)^{m-1})+fu=\\
=(m-1)x^{m-1}F^{m}(x,0)+mx^mF(x,0)^{m-1}\frac{\partial (F(x,0))}{\partial x}+fu.
\end{multline*}
A polynomial in $x$ is divisible by $f$ if and only if it is zero. Let $\alpha x^k$  be the leading term of $F(x,0)$. Then the coefficient in 
$(m-1)x^{m-1}F^{m}(x,0)+mx^mF(x,0)^{m-1}\frac{\partial (F(x,0))}{\partial x}$ at $x^{mk+m-1}$ is equal to $(m-1)\alpha^m+mk\alpha^m=(mk+m-1)\alpha^m\neq 0$.
Therefore, $f\nmid D(y)$. It implies $f\nmid a$, and hence, $a\mid h$.

Since $E(x)=-mr^{m-1}f^n$, we obtain $a\mid r^{m-1}f^n$. Since $f$ is irreducible and $f\nmid a$, we have $a\mid r^{m-1}$. So, we have $a\mid h$ and $a\mid  r^{m-1}$. This implies $\gcd (h,r)\notin k^*$. Then 
$$\gcd(xh,r)=\gcd(f^{mn+1}+r^m,r)=\gcd(f^{mn+1},r) \notin k^*.$$
 
Therefore, $f\mid r$. But $r=xF(x,f)+yf^n$. If $f\mid r$, then $f\mid xF(x,0)$. But $F(x,0)\notin k$, hence $xF(x,0)$ is a non-zero polynomial in $x$. Since $f$ depends on $y$ and $z$, it cannot divide $xF(x,0)$.

 Thus we have a contradiction and hence  $E$ is irreducible.

{\it Minimality of $r$.}
Denote $v=-hf^n=E(r)$. Assume there exist such $s\in B$ that $E(s)=\frac{v}{q}$ for some $q\in k[f,h]\setminus k$. Then $E(r-sq)=0$, i.e. $r-sq=p\in k[f,h]$. We have $q=f^ch^d$, where $0\leq c\leq n$, $0\leq d\leq 1$. 
Suppose $d=0$. Then $f\mid (r-p)$ i.e. $f\mid (xF(x,f)+yf^n-p)$. Since $p\in k[f,h]$, we write $p=p(f,h)$. Then $f\mid (xF(x,0)-p(0,h))$. But 
\begin{multline*}
h=f^{mn}z+x^{m-1}F^{m}(x,f)+mx^{m-2}F^{m-1}(x,f)yf^n+\ldots+mF(x,f)y^{m-1}f^{mn-n}=\\
=x^{m-1}F^{m}(x,0)+ft, \text{ for some t}\in B.
\end{multline*}
So, we have $f\mid (xF(x,0)-G(x^{m-1}F^{m}(x,0))$ for some univariate polynomial $G$. But 
$$xF(x,0)-G(x^{m-1}F^{m}(x,0))$$ 
is a non-zero polynomial in $x$, because $m \geqslant 2$ and $F(x,0) \notin k$.
This is a contradiction as non-zero polynomial in $x$ cannot be divisible by~$f$.  Therefore, $r-p$ is not divisible by~$f$. So we can assume $d=1$. 
Then $h \mid (r-p)$ i.e. $h\mid (xF(x,f)+yf^n-p)$. Hence $h\mid (xF(x,f)+yf^n-p(f,0))$. Putting $y=z=0$,  we have $f(x,0,0)=0,~~h(x,0,0)=x^{m-1}F^{m}(x,0)~~\text{and}~~r(x,0,0)=xF(x,0)$. Then 
$$x^{m-1}F^m(x,0) \mid (xF(x,0)-p(0,x^{m-1}F^m(x,0))$$ which is again a contradiction as $m\geqslant 2$ and $F(x,0) \notin k$. So $r-p$ is not divisible by $h$. Hence, $r$ is a minimal local slise for $E$.

Thus, all conditions of Corollary~\ref{lscor} are satisfied. Therefore, $E(m,k,F)$ is an LND of rank 3. 

\end{proof}

So we see that Freudenburg's example can be considered as a member of a large family of rank $3$ LNDs on $k^{[3]}$.

\medskip

\begin{center}
{\bf Acknowledgement:}
\end{center}

The authors are grateful to Professor Ivan Arzhantsev for his valuable comments while going through the earlier drafts and suggesting improvements and to Timofey Vilkin for fruitfull discussions. The authors were supported by Indo Russia Project DST/INT/RUS/RSF/P-48/2021 with TPN 64842 and RSF grant no. 22-41-02019. The second author is a Young Russian Mathematics award winner and would like to thank its sponsors and jury.

	{\small{

}}

\end{document}